\newtheorem{theo}{Theorem}[section]
\newtheorem{prop}[theo]{Proposition}
\newtheorem{claim}[theo]{Claim}
\newtheorem{lemm}[theo]{Lemma}
\newtheorem{coro}[theo]{Corollary}
\newtheorem{question}[theo]{Question}
\newtheorem{rema}[theo]{Remark}
\newtheorem{Defi}[theo]{Definition}
\title{On the coniveau of rationally connected threefolds}
\author{Claire Voisin}
\date{}
\newfont{\gothic}{eufb10}
\begin{document}
\maketitle
\setcounter{section}{-1}

\begin{abstract} We prove that the {\it integral} cohomology modulo torsion of a rationally connected threefold comes from
the integral cohomology of a smooth curve via the cylinder homomorphism associated to a family of $1$-cycles. Equivalently, it is of {\it strong} coniveau $ 1$. More generally, for a rationally connected manifold $X$ of dimension $n$, we show that  the
strong coniveau $\widetilde{N}^{n-2}H^{2n-3}(X,\mathbb{Z})$ and coniveau $N^{n-2}H^{2n-3}(X,\mathbb{Z})$ coincide for  cohomology modulo torsion.
 \end{abstract}
\section{Introduction}
We work over $\mathbb{C}$ and cohomology is Betti cohomology.  Given an abelian group $A$, recall that a cohomology class $\alpha\in H^k(X,A)$  has coniveau $\geq c$ if $\alpha_{\mid U}=0$ for some Zariski open set
$U=X\setminus Y$, with ${\rm codim}\,Y\geq c$. Equivalently, $\alpha$ comes from the relative cohomology
$H^k(X,U,A)$.
If $X$ is smooth projective of dimension $n$ and $A=\mathbb{Z}$, using Poincar\'e duality,
$\alpha\in H_{2n-k}(X,\mathbb{Z})$ comes from a homology class on $Y$
\begin{eqnarray}
\label{eqalphabeta} \alpha=j_* \beta\,\,{\rm in}\,\, H_{2n-k}(X,\mathbb{Z}),
\end{eqnarray}
for some $\beta \in H_{2n-k}(Y,\mathbb{Z})$.
 In the situation above, the closed algebraic subset  $Y$ cannot in general taken to be smooth.
Take for  example a smooth hypersurface  $X\subset \mathbb{P}^{n+1}$ with $n$ odd, $n\geq 3$. Then
$\rho(X)=1$ and   by the Lefschetz theorem on hyperplane sections,
for any smooth hypersurface $Y\subset  X$, $H^{n-2}(Y,\mathbb{Z})=0$, so no degree $n$  cohomology class on $X$ is supported
on a smooth hypersurface.
One can wonder however if, in the situation above,  after taking a desingularization $\tau:\widetilde{Y}\rightarrow Y$ of $Y$, with
composite map $\tilde{j}=j\circ \tau:\widetilde{Y}\rightarrow X$, one can rewrite (\ref{eqalphabeta}) in the form
\begin{eqnarray}
\label{eqalphabeta2} \alpha=\tilde{j}_* \tilde{\beta}\,\,{\rm in}\,\, H_{2n-k}(X,A).
\end{eqnarray}
 In the situation described above, when $X$ is smooth projective,  Deligne \cite{deligne} shows that, with $\mathbb{Q}$-coefficients,  $${\rm Im}\,(\tilde{j}_*: H_{2n-k}(\widetilde{Y},\mathbb{Q})\rightarrow H_{2n-k}(X,\mathbb{Q}))=
{\rm Im}\,({j}_*: H_{2n-k}({Y},\mathbb{Q})\rightarrow H_{2n-k}(X,\mathbb{Q})),$$
so that the answer is yes with $\mathbb{Q}$-coefficients.
With $\mathbb{Z}$-coefficients, this  is wrong, as shows the following simple   example: Let $j': \widetilde{C}\hookrightarrow   A$ be  a smooth genus $2$ curve in an abelian surface.
Let $\mu_2:A\rightarrow A$ be the multiplication by $2$ and let
$C=\mu_2(\widetilde{C})\subset A$, with inclusion map $j: C\rightarrow A$. As $j(C)$ is an ample curve, the Lefschetz theorem on hyperplane sections
says that $j_*:H_1(C,\mathbb{Z})\rightarrow H_1(A,\mathbb{Z})$ is surjective. However,
$C$ admits $\tilde{j}:=\mu_2\circ j': \widetilde{C}\rightarrow A$ as normalization and the
map $\tilde{j}_*:H_1(\widetilde{C},\mathbb{Z})\rightarrow H_1(A,\mathbb{Z})$ is not surjective as
 $\tilde{j}_*=2j'_*$ so ${\rm Im}\,\tilde{j}_*$ is contained in $2H_1(A,\mathbb{Z})$.

 In this example, the degree $1$ homology of $A$ (or degree $3$ cohomology) is however  supported on smooth curves.
To follow the terminology introduced by Benoist and Ottem in \cite{BeOt}, let us say that a cohomology class $\alpha\in H^k(X,\mathbb{Z})$
on a smooth projective complex  manifold $X$ is of strong coniveau $\geq c$ if there exists
a {\it smooth} projective manifold of dimension $n-c$, and a morphism
$f:Y\rightarrow X$ such that $\alpha =j_*\beta$ for some cohomology class $\beta \in H^{k-2c}(Y,\mathbb{Z})$. ($Y$ being smooth, we can apply Poincar\'{e} duality and use the Gysin morphism in cohomology.)
Benoist and Ottem prove the following result.
\begin{theo} (Benoist-Ottem, \cite{BeOt}) If $c\geq 1$ and $k \geq 2c+1$, there exist  complex projective  manifolds $X$  and  integral cohomology classes of degree $k$ on $X$ which are  of coniveau $\geq c$  but not of strong coniveau $\geq c$.\end{theo}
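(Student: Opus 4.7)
My plan is to reduce all cases to the single base case $(c,k)=(1,3)$ via product constructions, and then to settle the base case explicitly using a cohomological obstruction.

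For the reduction, suppose $(X_0,\alpha_0)$ with $\dim X_0=n$ realizes the conclusion for some pair $(c,k)$. Form $X=X_0\times\mathbb{P}^1$ and set $\alpha=p_1^*\alpha_0\smile p_2^*h$, where $h$ is the hyperplane class. Then $\alpha\in H^{k+2}(X,\mathbb{Z})$ is supported on $\mathrm{Supp}(\alpha_0)\times\{\mathrm{pt}\}$, a closed subset of codimension $c+1$, so $\alpha$ has coniveau $\geq c+1$. If $\alpha=g_*\beta$ for a smooth projective $g:Z\to X$ with $\dim Z=n-c$, then the projection formula and $(p_1)_*p_2^*h=1$ give
\[
\alpha_0 \;=\; (p_1)_*\alpha \;=\; (p_1\circ g)_*\beta,
\]
exhibiting a strong coniveau $\geq c$ witness for $\alpha_0$ on $X_0$, a contradiction. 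This yields $(c+1,k+2)$, and iterating from $(1,3)$ reaches all boundary cases $k=2c+1$. The remaining cases $k>2c+1$ at fixed $c$ are produced by an analogous construction on $X_0\times E$ for $E$ an elliptic curve, cupping with a suitable class in $H^1(E)$ or $H^2(E)$ and again using the projection formula, combined with generic slicing of $Z$, to force a strong coniveau $\geq c$ witness for $\alpha_0$ from any hypothetical witness for the new class.

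For the base case $(c,k)=(1,3)$, the task is to construct a smooth projective 3-fold $X$ and a class $\alpha\in H^3(X,\mathbb{Z})$ of coniveau $\geq 1$ that cannot be written as $f_*\beta$ for any morphism $f:Y\to X$ from a smooth projective surface. The example in the introduction supplies the core intuition: for a very general principally polarized abelian surface $A$ with theta curve $\tilde C$, the Poincar\'e dual in $H^3(A,\mathbb{Z})$ of $[C]$ with $C=\mu_2(\tilde C)$ is $2$-divisible along the natural desingularization $\tilde C\to A$. The harder task is to upgrade failure for this one witness into failure for \emph{every} smooth surface witness. For that I would use a mod-$2$ cohomological obstruction from Steenrod squares: for any smooth $f:Y\to X$, the Steenrod squares of $f_*\beta$ are constrained by a Wu-type identity involving the Steenrod squares of $\beta$ and the Stiefel-Whitney classes of the virtual normal bundle of $f$. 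The plan is to build $X$ as a 3-fold related to $A$ (for instance a $\mathbb{P}^1$-bundle over $A$, or a 3-fold fibered over $A$) with $\alpha$ chosen so that this identity is violated for every possible $f$, while $\alpha$ remains of coniveau $\geq 1$.

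The main obstacle is precisely the base case. The example in the introduction rules out only one specific desingularization, and extending this to rule out every smooth surface witness requires a genuine cohomological invariant sensitive to the mod-$2$ characteristic class data of arbitrary maps $f:Y\to X$. Finding a 3-fold $X$ and a class $\alpha$ for which the Steenrod-square obstruction both fires (ruling out all smooth witnesses) and is compatible with coniveau $\geq 1$ is the combinatorial heart of the Benoist-Ottem construction.
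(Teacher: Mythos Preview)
This theorem is not proved in the paper; it is quoted from Benoist--Ottem with a citation, so there is no ``paper's own proof'' to compare against. That said, your plan has a real gap.

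Your base case is misconceived. You set yourself the task of producing a \emph{threefold} $X$ with a class in $H^3(X,\mathbb{Z})$ of coniveau $\geq 1$ but not of strong coniveau $\geq 1$. But the statement does not constrain $\dim X$, and the very next sentence of the paper records that the case $k=3$, $c=1$, $\dim X=3$ ``remains open''; indeed the paper's main theorem shows that for rationally connected threefolds coniveau and strong coniveau agree on $H^3$ modulo torsion. The Benoist--Ottem base examples for $c=1$ are, as the paper also notes, varieties of general type of larger dimension. Your proposed route through a $\mathbb{P}^1$-bundle over an abelian surface, guided by the $\mu_2$ example in the introduction, is aimed at precisely the unresolved case; that example is offered in the paper as a contrast to the Benoist--Ottem phenomenon, not as a path to it. The Steenrod-square obstruction is the right mechanism, but it must be implemented on a higher-dimensional $X$.

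There is also a gap in your reduction $(c,k)\rightsquigarrow(c,k+1)$ via $X_0\times E$. If $\alpha=g_*\beta$ with $g:Z\to X_0\times E$ smooth and $\dim Z=\dim X_0+1-c$, then pushing $\beta\smile g^*p_2^*\eta'$ to $X_0$ along $p_1\circ g$ shifts degree by $2(c-1)$, not $2c$, so you only obtain $\alpha_0\in\widetilde{N}^{c-1}H^k(X_0)$. Your ``generic slicing of $Z$'' would require $\beta\smile g^*p_2^*\eta'\in H^{k+2-2c}(Z)$ to lie in the image of the Gysin map from a smooth hyperplane section of $Z$; over $\mathbb{Z}$ this is not supplied by Lefschetz-type arguments in the needed degree range. (Your $(c,k)\rightsquigarrow(c+1,k+2)$ step via $X_0\times\mathbb{P}^1$ is correct, since there $\dim Z=\dim X_0-c$ already.)
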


Their construction however imposes restrictions on the dimension of $X$ and for example, the case where
$k=3,\,c=1$, ${\rm dim}\,X=3$ remains open. For $c=1$, the examples constructed in \cite{BeOt} are varieties of general type.

We study in this paper the case of rationally connected $3$-folds (and more generally degree $3$ homology of rationally connected manifolds). As we will recall in Section \ref{seccomments}, the integral cohomology of
degree $>0$ of a smooth complex projective  rationally connected manifold is of coniveau $\geq1$. However, except in specific cases, there are no general available results for the strong coniveau.
Our main result  is the following.

\begin{theo}\label{theomain} Let $X$ be a smooth projective rationally connected threefold over $\mathbb{C}$. Then
the cohomology $H^3(X,\mathbb{Z})$ modulo torsion has strong coniveau $1$.
\end{theo}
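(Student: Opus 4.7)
The plan is to produce a smooth projective curve $\widetilde{C}$ and a codimension $2$ cycle $\Gamma\subset\widetilde{C}\times X$ such that the cylinder homomorphism
$$
\Gamma_*\colon H^1(\widetilde{C},\mathbb{Z})\longrightarrow H^3(X,\mathbb{Z})
$$
is surjective modulo torsion. A smooth resolution $Y$ of the support of $\Gamma$ then exhibits strong coniveau $\geq 1$ for $H^3(X,\mathbb{Z})$ modulo torsion, since the cylinder factors as $H^1(\widetilde{C},\mathbb{Z})\to H^1(Y,\mathbb{Z})\xrightarrow{\widetilde{j}_*} H^3(X,\mathbb{Z})$ for the induced morphism $\widetilde{j}\colon Y\to X$ from the smooth surface $Y$.

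Rational connectedness of $X$ gives $H^{3,0}(X)=0$, so $J:=J^3(X)$ is an abelian variety with a canonical identification $H_1(J,\mathbb{Z})=H^3(X,\mathbb{Z})/\mathrm{torsion}$. For a family of $1$-cycles $\mathcal{Z}\subset B\times X$ parametrised by a smooth projective variety $B$, the cylinder map from $H^{2\dim B -1}(B,\mathbb{Z})$ to $H^3(X,\mathbb{Z})$ coincides (via Poincar\'e duality on $B$) with the morphism $\phi_*\colon H_1(B,\mathbb{Z})\to H_1(J,\mathbb{Z})$ induced by the Abel-Jacobi map $\phi\colon B\to J$. The strategy is therefore: (i) construct $(B,\mathcal{Z})$ with $\phi_*$ surjective on integral $H_1$; (ii) take a general smooth complete intersection curve $\widetilde{C}\subset B$ of $\dim B-1$ very ample divisors, so that iterated weak Lefschetz gives $H_1(\widetilde{C},\mathbb{Z})\twoheadrightarrow H_1(B,\mathbb{Z})$. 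The restricted cycle $\Gamma:=\mathcal{Z}|_{\widetilde{C}\times X}$ then yields the desired surjective cylinder map.

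The main difficulty, and where rational connectedness must be used essentially, is step (i). Rationally, one can build $(B,\mathcal{Z})$ with $\phi$ dominant via a component of the Hilbert scheme of smooth curves in $X$: the Abel-Jacobi map on algebraic $1$-cycles of a rationally connected threefold surjects onto $J$, and the infinitesimal Abel-Jacobi at a sufficiently mobile family is surjective onto $T_0J$. Integrally, however, a surjective morphism of abelian varieties $\mathrm{Alb}(B)\to J$ factors as a surjection with connected kernel followed by a possibly nontrivial isogeny, and this isogeny contributes a finite cokernel on integral $H_1$ -- precisely the phenomenon illustrated by the introduction's abelian-surface example and exploited in the Benoist--Ottem counterexamples. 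To overcome it, I would exploit the abundance of very free rational curves on $X$ to construct correcting families of $1$-cycles: for each primitive class in $H_1(J,\mathbb{Z})$ not yet hit by a preliminary $\phi_*$, exhibit an auxiliary algebraic family of $1$-cycles realising that class integrally in Abel-Jacobi, then enlarge $B$ by taking its union (or product) with the parameter spaces of these correction families. Establishing this integral spanning property for Abel-Jacobi images of algebraic families of $1$-cycles, in the spirit of the techniques used for the integral Hodge conjecture for $1$-cycles on rationally connected threefolds, is the heart of the proof; once it is in hand, the Lefschetz reduction in (ii) and the identification of cylinder with Abel-Jacobi on integral $H_1$ are essentially formal.
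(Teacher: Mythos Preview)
Your proposal correctly isolates the target (a smooth curve with surjective integral cylinder map) and the central obstruction (a surjection $\mathrm{Alb}(B)\to J$ may be a nontrivial isogeny, leaving a finite cokernel on $H_1$). But the step where you propose to ``exploit the abundance of very free rational curves to construct correcting families realising each missing primitive class integrally in Abel-Jacobi'' is not an argument: it restates what must be proved. Families of very free rational curves are parametrised by rationally connected varieties and therefore have \emph{trivial} Abel-Jacobi map, so they cannot themselves supply the missing integral classes. You have given no mechanism by which adjoining such curves would enlarge the image of $\phi_*$ on $H_1$, and you explicitly acknowledge that this ``integral spanning property'' is the heart of the matter without establishing it.

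The paper's route is structurally different and the difference is instructive. It splits the problem into two independent steps. First (Theorem~\ref{theocylinder}, Corollary~\ref{corocylinder}), \emph{without} using rational connectedness, one shows that all of $N^1H^3(X,\mathbb{Z})_{\rm tf}$ lies in the image of cylinder maps from families of semistable curves over possibly \emph{singular} base curves. The missing classes in the isogeny cokernel correspond to torsion points of the kernel of $J(\widetilde{N}^1H^3)\to J(N^1H^3)$; one arranges (Lemma~\ref{prochoixdeC}) that the associated $1$-cycles $\mathcal{Z}_{x_i}-\mathcal{Z}_{x_0}$ are torsion with vanishing Abel-Jacobi image, hence rationally equivalent to zero by Bloch's theorem on torsion in ${\rm CH}^2$. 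The rational equivalence furnishes a family over $\mathbb{P}^1$ which, glued to the original curve $C$ at $x_0,x_i$, creates a loop on a singular curve $C'$ whose cylinder image is precisely the missing class (Lemma~\ref{lepour3cycle}); considerable further work (Claim~\ref{claimmod}) is needed to make the glued family genuinely semistable. Second (Theorem~\ref{theocylRC}), rational connectedness enters only to upgrade the singular parameter curve to a smooth one: gluing very free rational curves to the fibers makes the moduli space of stable maps smooth at the relevant points, so the classifying map from $C'$ lands in a smooth variety and the cylinder factors through it. Thus very free curves function as a \emph{smoothing device for the moduli space}, not as a source of Abel-Jacobi classes; your sketch has their role inverted and leaves the decisive integral step open.
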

 It turns out
that an equivalent formulation is the following
\begin{coro} \label{corintro} If $X$ is  a rationally connected threefold, there exist  a smooth curve $C$  and   a family of $1$-cycles
$\mathcal{Z}\in {\rm CH}^2(C\times X)$  such that
 the cylinder homomorphism $[\mathcal{Z}]_*:H^1(C,\mathbb{Z})\rightarrow H^3(X,\mathbb{Z})_{\rm tf}$  is surjective.
\end{coro}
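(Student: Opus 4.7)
The corollary is equivalent to Theorem \ref{theomain}. The implication Corollary $\Rightarrow$ Theorem is straightforward: desingularizing the support of $\mathcal{Z}$ yields a smooth projective surface $\widetilde{\mathcal{Z}}$ with morphisms $\varphi:\widetilde{\mathcal{Z}}\to C$ and $\psi:\widetilde{\mathcal{Z}}\to X$, and the projection formula gives $[\mathcal{Z}]_*=\psi_*\circ\varphi^*$, so surjectivity onto $H^3(X,\mathbb{Z})_{\mathrm{tf}}$ forces $H^3(X,\mathbb{Z})_{\mathrm{tf}}\subseteq\psi_*(H^1(\widetilde{\mathcal{Z}},\mathbb{Z}))$, which is exactly strong coniveau $1$. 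My plan therefore focuses on the reverse implication.

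Granted Theorem \ref{theomain}, take a smooth projective surface $Y$ and a morphism $f:Y\to X$ with $f_*(H^1(Y,\mathbb{Z}))\supseteq H^3(X,\mathbb{Z})_{\mathrm{tf}}$. If $q(Y)=0$ there is nothing to prove; otherwise set $P=\mathrm{Pic}^0(Y)$ and, using Bertini and the iterated Lefschetz hyperplane theorem, take a smooth complete-intersection curve $j:C\hookrightarrow P$ with $j_*:H_1(C,\mathbb{Z})\twoheadrightarrow H_1(P,\mathbb{Z})$ surjective. Here $H_1(P,\mathbb{Z})$ is canonically identified with $H^1(Y,\mathbb{Z})_{\mathrm{tf}}$ (the defining lattice of $P$), so we have obtained a surjection $H_1(C,\mathbb{Z})\twoheadrightarrow H^1(Y,\mathbb{Z})_{\mathrm{tf}}$.

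The key step is converting $j_*$ into a cylinder correspondence on $C\times Y$ via the normalized Poincar\'e bundle $\mathcal{P}$ on $Y\times P$. Set $D\subset C\times Y$ to be (the transpose of) a divisor representing $c_1((\mathrm{id}_Y\times j)^*\mathcal{P})$. The normalization of $\mathcal{P}$ annihilates the K\"unneth components of $c_1(\mathcal{P})$ in $H^2(Y)\otimes H^0(P)$ and $H^0(Y)\otimes H^2(P)$, so $c_1(\mathcal{P})$ reduces to its $(1,1)$-part, which is tautologically the canonical pairing class in $H^1(Y,\mathbb{Z})\otimes H^1(P,\mathbb{Z})$. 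Unwinding the K\"unneth/correspondence formula after pulling back through $j$ identifies $[D]_*:H^1(C,\mathbb{Z})\to H^1(Y,\mathbb{Z})$ with (a sign of) $j_*:H_1(C,\mathbb{Z})\to H_1(P,\mathbb{Z})=H^1(Y,\mathbb{Z})_{\mathrm{tf}}$ modulo torsion, hence surjective.

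Finally, set $\mathcal{Z}:=(\mathrm{id}_C\times f)_*[D]\in\mathrm{CH}^2(C\times X)$. The projection formula yields $[\mathcal{Z}]_*=f_*\circ[D]_*$, a composition of maps each surjective modulo torsion onto $H^3(X,\mathbb{Z})_{\mathrm{tf}}$, so the cylinder map is surjective as required. The main technical subtlety is the K\"unneth bookkeeping that identifies $[D]_*$ with $j_*$ integrally modulo torsion; the standard normalizations of the Poincar\'e bundle should ensure that this identification is exact, with no integer multiplier and no torsion-index obstruction arising (otherwise one can absorb a finite index by enlarging $C$ through an isogeny cover of $P$ before cutting it out).
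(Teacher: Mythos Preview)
Your proof is correct and follows essentially the same route as the paper: starting from the surface $Y$ and morphism $f:Y\to X$ given by Theorem~\ref{theomain}, you use the Poincar\'e divisor on $\mathrm{Pic}^0(Y)\times Y$ to produce a family of $1$-cycles on $\mathrm{Pic}^0(Y)\times X$, then cut down to a complete-intersection curve $C\subset\mathrm{Pic}^0(Y)$ via Lefschetz. The only cosmetic difference is the order of operations (you restrict to $C$ first, then push forward by $f$; the paper does the reverse), and your K\"unneth worry is unnecessary---the paper simply invokes the standard fact that $[\mathcal{D}]_*:H_1(\mathrm{Pic}^0(Y),\mathbb{Z})\to H^1(Y,\mathbb{Z})$ is the natural isomorphism, with no multiplier.
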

Here and in the sequel, we denote $\Gamma_{\rm tf}:=\Gamma/{\rm Torsion}$ for any abelian group  $\Gamma$.
\begin{proof} (See more generally Proposition \ref{procylegalstrong}.) Theorem \ref{theomain} says that there exist a smooth projective surface $\Sigma$ and a morphism
$f:\Sigma\rightarrow X$ such that $f_*:H^1(\Sigma,\mathbb{Z})\rightarrow H^3(X,\mathbb{Z})_{\rm tf}$ is surjective.
The existence of a Poincar\'e divisor  $\mathcal{D}\in {\rm CH}^1({\rm Pic}^0(\Sigma)\times \Sigma)$, satisfying the property
that $[\mathcal{D}]_*: H_1({\rm Pic}^0(\Sigma),\mathbb{Z})\rightarrow H^1(\Sigma,\mathbb{Z})$ is an isomorphism, provides
a codimension $2$-cycle  $$\mathcal{Z}=(Id,f)_*(\mathcal{D})\in{\rm CH}^2( {\rm Pic}^0(\Sigma)\times X)$$ such that
$$[\mathcal{Z}]_*: H_1({\rm Pic}^0(\Sigma),\mathbb{Z})\rightarrow H^3(X,\mathbb{Z})_{\rm tf}$$
is surjective. We finally choose any smooth  curve $C$ complete intersection of ample hypersurfaces  in  ${\rm Pic}^0(\Sigma)$ and restrict $\mathcal{Z}$ to $C$. The corollary then follows by the Lefschetz hyperplane section  theorem applied to $C\subset {\rm Pic}^0(\Sigma)$.
\end{proof}
This theorem will be proved in Section \ref{secfinal}.  We will prove in fact more generally (see Theorem \ref{theomainvrai})
\begin{theo} \label{theointronew} For any rationally connected smooth projective variety
of dimension $n$, one has the equality
$$N^{n-2} H^{2n-3}(X,\mathbb{Z})_{\rm tf}=\widetilde{N}^{n-2}H^{2n-3}(X,\mathbb{Z})_{\rm tf}.$$
Furthermore, the equality
 $$H^{2n-3}(X,\mathbb{Z})_{\rm tf}=\widetilde{N}^{n-2}H^{2n-3}(X,\mathbb{Z})_{\rm tf}$$
 holds
 assuming that the Abel-Jacobi map
 $\Phi_X:{\rm CH}_1(X)_{\rm alg}\rightarrow J^{2n-3}(X)$
 is injective on torsion.
 \end{theo}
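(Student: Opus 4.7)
The plan is to translate the strong coniveau statement, via Proposition \ref{procylegalstrong}, into the existence of a smooth projective curve $C$ and a $1$-cycle $\mathcal{Z} \in {\rm CH}^{n-1}(C \times X)$ whose cylinder homomorphism $[\mathcal{Z}]_*: H^1(C,\mathbb{Z}) \to H^{2n-3}(X,\mathbb{Z})_{\rm tf}$ hits the target subgroup, and then to build this cylinder using the Abel-Jacobi map to $J^{2n-3}(X)$. The key Hodge-theoretic observation is that for rationally connected $X$ of dimension $n$, the vanishings $H^q(X,\mathcal{O}_X)=0$ and $H^0(X,\Omega^p_X)=0$ for $p,q\geq 1$, together with Serre duality, force only the Hodge pieces $H^{n-1,n-2}$ and $H^{n-2,n-1}$ of $H^{2n-3}(X,\mathbb{C})$ to be possibly nonzero. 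Hence $J^{2n-3}(X)$ is an abelian variety with lattice $H_1(J^{2n-3}(X),\mathbb{Z})$ canonically isomorphic to $H^{2n-3}(X,\mathbb{Z})_{\rm tf}$; under Poincar\'e duality on $C$, the cylinder $[\mathcal{Z}]_*$ becomes the pushforward on $H_1$ induced by the Abel-Jacobi morphism $\phi_{\mathcal{Z}}: c\mapsto \Phi_X(\mathcal{Z}_c-\mathcal{Z}_{c_0})$.

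The construction proceeds in two steps. First, I would build a smooth projective variety $B$ together with a family $\mathcal{Z}_B\in {\rm CH}^{n-1}(B\times X)$ such that the induced $\phi_B:B\to J^{2n-3}(X)$ surjects onto the target subabelian variety. For Part 2, surjectivity onto all of $J^{2n-3}(X)$ uses the surjectivity of $\Phi_X$, which follows from rational connectedness via Lieberman-style algebraicity for level-$1$ Hodge structures. For Part 1, given $\alpha\in N^{n-2}H^{2n-3}(X,\mathbb{Z})$ supported on a $2$-dimensional $Y\subset X$ with desingularization $\tilde{j}:\widetilde{Y}\to X$, one chooses $B$ so that $\phi_B$ covers the image of ${\rm Alb}(\widetilde{Y})\to J^{2n-3}(X)$, which by Deligne's theorem already contains $\alpha\otimes\mathbb{Q}$. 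Second, taking $C\subset B$ a smooth curve obtained as a complete intersection of very ample divisors, the Lefschetz hyperplane theorem yields $H_1(C,\mathbb{Z})\to H_1(B,\mathbb{Z})$ surjective, and the restricted cycle $\mathcal{Z}:=\mathcal{Z}_B|_{C\times X}$ produces the required cylinder.

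The main obstacle is promoting the rational surjectivity of $\phi_B$ to integral surjectivity (modulo torsion) of the lattice map $(\phi_B)_*: H_1(B,\mathbb{Z})\to H_1(J^{2n-3}(X),\mathbb{Z})$. The cokernel of this lattice map is the finite component group of the kernel of the associated morphism of abelian varieties ${\rm Alb}(B)\to J^{2n-3}(X)$. In Part 2, the hypothesis that $\Phi_X$ is injective on torsion is exactly what is needed to kill this cokernel: any nontrivial component of the kernel would correspond, through a Kummer-type long exact sequence, to a nontrivial torsion class in ${\rm CH}_1(X)_{\rm alg}$ mapping to zero in $J^{2n-3}(X)$, contradicting the hypothesis. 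For Part 1, the absence of a global hypothesis forces a more delicate, $\alpha$-by-$\alpha$ argument: one must show that the Gysin images from smooth $2$-dimensional sources form a saturated subgroup of $H^{2n-3}(X,\mathbb{Z})_{\rm tf}$ whose $\mathbb{Q}$-span is $N^{n-2}H^{2n-3}(X,\mathbb{Q})$. I expect this saturation property -- rephrasable as the triviality of a torsion obstruction in an appropriate Bloch-Srinivas / Kummer sequence -- to be the most technical step, following methods developed in earlier work on universal codimension $2$ cycles.
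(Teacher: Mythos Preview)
Your setup via Proposition~\ref{procylegalstrong} and the observation that $J^{2n-3}(X)$ is an abelian variety are correct, but the crucial step --- promoting rational to integral surjectivity of $(\phi_B)_*: H_1(B,\mathbb{Z})\to H^{2n-3}(X,\mathbb{Z})_{\rm tf}$ --- does not go through as you describe. In Part~2 you argue that a nontrivial component of ${\rm Ker}\bigl({\rm Alb}(B)\to J^{2n-3}(X)\bigr)$ produces a torsion $1$-cycle $z\in{\rm CH}_1(X)_{\rm alg}$ with $\Phi_X(z)=0$, whence $z=0$ by hypothesis. But $z=0$ in ${\rm CH}_1(X)$ does not force the component to be trivial: the morphism ${\rm Alb}(B)\to J^{2n-3}(X)$ does not factor injectively through ${\rm CH}_1(X)_{\rm alg}$, so no contradiction arises and the cokernel of $(\phi_B)_*$ remains uncontrolled. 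For Part~1 you leave the analogous saturation step explicitly unproved.

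The paper's mechanism is precisely what is missing from your sketch. The conclusion $z=0$ in ${\rm CH}_1(X)$ is used \emph{constructively}: rational equivalence is witnessed by surfaces $\Sigma_i\to\mathbb{P}^1$, $\Sigma_i\to X$, and one glues these $\mathbb{P}^1$'s to the original smooth curve $C$ at the points $x_i,x_0$ to obtain a family of semistable maps over a \emph{singular} nodal curve $C'$. The new loops in $H_1(C',\mathbb{Z})$ are shown (Lemma~\ref{lepour3cycle}) to hit exactly the missing classes in $N^{n-2}/\widetilde{N}^{n-2}$; this is Theorem~\ref{theocylinder}. Passing from the singular base $C'$ back to a smooth one is a second, independent argument (Theorem~\ref{theocylRC}) that genuinely uses rational connectedness: one attaches very free rational curves to the fibers so that the resulting stable maps are unobstructed, and then the classifying map to moduli lands in the smooth locus. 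For Part~1 the injectivity-on-torsion input is not an extra hypothesis but Suzuki's theorem (Theorem~\ref{theosuzuki}) for the Walker lift $\widetilde{\Phi}_X$, which holds for all rationally connected $X$.
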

 This last assumption, which is automatically satisfied when $n=3$,  is related to the  following question mentioned in  \cite[1.3.3]{voisingargnano}).
\begin{question} Let $X$ be a rationally connected manifold. Is the Abel-Jacobi map
$\Phi_X:{\rm CH}_1(X)_{\rm alg}\rightarrow J^{2n-3}(X)$
 injective on torsion cycles?
\end{question}
Note that, as explained in {\it loc. cit.}, the group  $${\rm Tors}({\rm Ker}\,(\Phi_X: {\rm CH}_1(X)_{\rm alg}\rightarrow J^{2n-3}(X)))$$ is a stable birational invariant of   projective complex manifolds $X$, which is trivial when $X$ admits a Chow decomposition of the diagonal. Results of
Suzuki \cite{suzuki} give a complete understanding of this birational invariant in terms of coniveau (see also Section \ref{secaj}).

 In Section \ref{secmaterial}, we discuss
various notions of coniveau in relation to rationality or stable rationality questions,  which we will need to split the statement of the main theorems  into two different statements.  In particular we introduce the     ``cylinder homomorphism filtration" $N_{c,{\rm cyl}}$, which has a strong version $\widetilde{N}_{c,{\rm cyl}}$. The cylinder homomorphism filtration $N_{c,{\rm cyl}}H_{k+2c}(X,\mathbb{Z})$ on the homology (or cohomology) of  a smooth projective manifold $X$ uses proper  flat families
$\mathcal{Z}\rightarrow Z$
of  subschemes of $X$ of  dimension $c$, and the associated cylinder map
$H_k(Z,\mathbb{Z})\rightarrow H_{k+2c}(X,\mathbb{Z})$, which by flatness can be defined without any smoothness assumption on $Z$.
The strong version $\widetilde{N}_{c,{\rm cyl}}H_{k+2c}(X,\mathbb{Z})$ is similar but imposes the smoothness assumption to $Z$ (so flatness is not needed anymore).
 When $c=1$,
it is better to use  the stable-cylinder filtration $N_{1,{\rm cyl,st}}H_{k+2}(X,\mathbb{Z})$ (where $X$ is  mooth projective of dimension $n$), which  is generated by
the cylinder homomorphisms
$$H_k(Z,\mathbb{Z})\rightarrow H_{k+2}(X,\mathbb{Z})$$
for all families of semi-stable maps from curves to  $X$, without smoothness assumption on $Z$ (but we will assume that ${\rm dim}\,Z\leq k$).
These various filtrations and their inclusions are discussed   in Section \ref{secmaterial}.
We prove in Section \ref{seccyltheo}  Theorem \ref{theocylinder}, which is the first step towards the proof of Theorem
\ref{theomain}, and in dimension $3$ says the following.
\begin{theo}\label{theocylintro} (Cf. Corollary \ref{corocylinder}) Restricting to the  torsion free part of cohomology, one has
the equality
$$N_{1,{\rm cyl,st}}H^3(X,\mathbb{Z})_{\rm tf}=N^1H^3(X,\mathbb{Z})_{\rm tf}$$
for any smooth projective complex threefold $X$.
\end{theo}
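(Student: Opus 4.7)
The inclusion $N_{1,{\rm cyl,st}}H^3(X,\mathbb{Z})_{\rm tf}\subset N^1H^3(X,\mathbb{Z})_{\rm tf}$ is immediate from the definitions: a class lying in the image of a cylinder $[\mathcal{Z}]_*:H_1(Z,\mathbb{Z})\to H_3(X,\mathbb{Z})$ with $\mathcal{Z}\in {\rm CH}^2(Z\times X)$ coming from a semistable family $\mathcal{C}\to Z$ and a morphism $f:\mathcal{C}\to X$ is supported on $f(\mathcal{C})\subset X$, a closed algebraic subset of codimension at least $1$ (this already holds integrally, before passing to the torsion-free quotient).

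For the reverse inclusion, let $\alpha\in N^1H^3(X,\mathbb{Z})_{\rm tf}$. Using Poincar\'e duality on the threefold $X$ to identify $H^3(X,\mathbb{Z})\cong H_3(X,\mathbb{Z})$, I would write $\alpha=j_*\beta$ for some irreducible closed surface $j:Y\hookrightarrow X$ and $\beta\in H_3(Y,\mathbb{Z})$. Choose a resolution of singularities $\tau:\widetilde{Y}\to Y$ and set $\widetilde{j}=j\circ\tau$. Deligne's theorem recalled in the introduction gives $\widetilde{j}_*H_3(\widetilde{Y},\mathbb{Q})=j_*H_3(Y,\mathbb{Q})$, so there exist a positive integer $N$ and $\widetilde{\beta}\in H_3(\widetilde{Y},\mathbb{Z})$ with $\widetilde{j}_*\widetilde{\beta}=N\alpha$ in $H_3(X,\mathbb{Z})_{\rm tf}$.

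Next, translate this rational lift into a smooth cylinder realization. The Poincar\'e divisor $\mathcal{D}\in {\rm CH}^1(\widetilde{Y}\times {\rm Pic}^0(\widetilde{Y}))$ induces an isomorphism $[\mathcal{D}]_*:H_1({\rm Pic}^0(\widetilde{Y}),\mathbb{Z})\xrightarrow{\sim} H^1(\widetilde{Y},\mathbb{Z})\cong H_3(\widetilde{Y},\mathbb{Z})$. Take a smooth ample complete intersection curve $C\subset {\rm Pic}^0(\widetilde{Y})$, so that $H_1(C,\mathbb{Z})\twoheadrightarrow H_1({\rm Pic}^0(\widetilde{Y}),\mathbb{Z})$ by Lefschetz, and push the restriction $\mathcal{D}|_{C\times \widetilde{Y}}$ forward along $({\rm id}_C,\widetilde{j})$ to obtain a codimension-$2$ cycle $\mathcal{Z}_0\in {\rm CH}^2(C\times X)$. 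Its cylinder $[\mathcal{Z}_0]_*:H_1(C,\mathbb{Z})\to H_3(X,\mathbb{Z})_{\rm tf}$ has image equal to $\widetilde{j}_*H_3(\widetilde{Y},\mathbb{Z})_{\rm tf}$, which contains $N\alpha$. Hence $N\alpha\in\widetilde{N}_{1,{\rm cyl}}H^3(X,\mathbb{Z})_{\rm tf}\subset N_{1,{\rm cyl,st}}H^3(X,\mathbb{Z})_{\rm tf}$.

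The decisive remaining step is to pass from $N\alpha$ to $\alpha$ itself, and this is exactly where the semistable (as opposed to smooth) refinement of the cylinder filtration becomes essential. The torsion quotient $j_*H_3(Y,\mathbb{Z})_{\rm tf}/\widetilde{j}_*H_3(\widetilde{Y},\mathbb{Z})_{\rm tf}$ is controlled by the Mayer--Vietoris sequence of the resolution $\tau$ (and of the normalization of $Y$, if $Y$ is not normal); its possible nontriviality is precisely the obstruction illustrated in the introduction by the pushforward example $\mu_2\circ j':\widetilde{C}\to A$, which prevents the analogous integral statement for smooth families. The plan is to realize the missing classes by replacing the smooth base $C$ by a nodal base $Z$ whose nodes encode the exceptional or non-normal loci of $Y$: a carefully chosen semistable family $\mathcal{C}\to Z$ mapping to $X$ through $Y$ produces, via the extra integral $H_1(Z,\mathbb{Z})$-classes generated by the nodes, exactly the missing cylinder images. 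Executing this construction---matching the node structure of the base to the singular structure of $Y$ so that the cylinder map recovers $\alpha$ integrally rather than only the multiple $N\alpha$---is the main technical obstacle, and is the content of Section~\ref{seccyltheo}.
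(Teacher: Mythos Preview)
Your outline is correct through the point where you obtain $N\alpha\in\widetilde{N}_{1,{\rm cyl}}H^3(X,\mathbb{Z})_{\rm tf}$, but the mechanism you propose for the final step---matching the nodal structure of a base curve to the singular locus of a fixed surface $Y$ via Mayer--Vietoris of the resolution $\tau$---is not what the paper does, and I do not see how to make it work. The obstruction is global: the torsion quotient that must be killed is $N^1H^3(X,\mathbb{Z})_{\rm tf}/\widetilde{N}^1H^3(X,\mathbb{Z})_{\rm tf}$, which is defined using \emph{all} surfaces in $X$, not the resolution data of one chosen $Y$. There is no reason the exceptional locus of $\tau:\widetilde{Y}\to Y$ should account for the discrepancy between $\alpha$ and the image of $[\mathcal{Z}_0]_*$, and the analogy with the $\mu_2$ example is misleading here.

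The actual argument in Section~\ref{seccyltheo} is of a different nature and rests essentially on Bloch's theorem (Theorem~\ref{theobloch}) that the Abel--Jacobi map $\Phi_X:{\rm CH}^2(X)_{\rm alg}\to J^3(X)$ is injective on torsion. One first interprets the finite quotient $N^1H^3(X,\mathbb{Z})_{\rm tf}/\widetilde{N}^1H^3(X,\mathbb{Z})_{\rm tf}$ as the kernel $T_\alpha$ of the isogeny of intermediate Jacobians $J(\widetilde{N}^1H^3)\to J(N^1H^3)$ (Lemmas~\ref{letrivsurtore},~\ref{leautredeftphi}). Choosing the curve $C$ so that its lifted Abel--Jacobi image passes through every element of $T_\alpha$ at torsion points $x_i$ (Lemma~\ref{prochoixdeC}), the $1$-cycles $\mathcal{Z}_{x_i}-\mathcal{Z}_{x_0}$ are torsion with vanishing Abel--Jacobi image, hence \emph{rationally equivalent to zero} by Bloch. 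Each such rational equivalence furnishes a family over $\mathbb{P}^1$; gluing these $\mathbb{P}^1$'s to $C$ at $x_0,x_i$ produces the nodal base $C'$, and the new loops in $H_1(C',\mathbb{Z})$ map under the cylinder homomorphism exactly to the missing generators $\eta_i$ (Lemma~\ref{lepour3cycle}). The nodal base therefore arises from rational equivalence in ${\rm CH}_1(X)$, not from the singularities of $Y$; Bloch's theorem is the key input your proposal is missing.
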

The second step of the proof of Theorem
\ref{theomain} is the following  result, now valid for rationally connected manifolds of any dimension and also for the torsion part of homology.
\begin{theo} (Cf. Theorem \ref{theocylRC}) Let $X$ be rationally connected smooth projective over $\mathbb{C}$.
Then
\begin{eqnarray} N_{1,{\rm cyl,st}}H^{2n-3}(X,\mathbb{Z})=\widetilde{N}_{1,{\rm cyl}}H^{2n-3}(X,\mathbb{Z}).
\end{eqnarray}
Equivalently, $N_{1,{\rm cyl,st}}H^{2n-3}(X,\mathbb{Z})= \widetilde{N}^{n-2}H^{2n-3}(X,\mathbb{Z})$.
\end{theo}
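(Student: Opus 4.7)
The plan is to prove the non-trivial inclusion
$$N_{1,{\rm cyl,st}}H^{2n-3}(X,\mathbb{Z}) \subseteq \widetilde{N}_{1,{\rm cyl}}H^{2n-3}(X,\mathbb{Z});$$
the reverse inclusion should be essentially formal, since a flat family of $1$-cycles over a smooth base $Z$ can be refined, via semi-stable reduction of the total space, into a family of semi-stable maps from curves with the same cylinder image.

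For the hard inclusion, let $\alpha=[\mathcal{C}]_*\gamma$ arise from a family of semi-stable maps $\mathcal{C}\to Z\to X$ with $\dim Z\leq 1$, so that $Z$ is a (possibly singular, possibly reducible) curve, and $\gamma\in H_1(Z,\mathbb{Z})$. First I would normalise: let $\pi\colon\widetilde{Z}\to Z$ be the normalisation, so that $\widetilde{Z}$ is a smooth curve. The pull-back family $\widetilde{\mathcal{C}}=\mathcal{C}\times_Z\widetilde{Z}\to\widetilde{Z}$, composed with the map to $X$, is a proper flat family of $1$-cycles over the smooth base $\widetilde{Z}$ (flatness being automatic over a smooth curve), so its cylinder image lies in $\widetilde{N}_{1,{\rm cyl}}H^{2n-3}(X,\mathbb{Z})$. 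By proper base change, $[\mathcal{C}]_*\circ\pi_*=[\widetilde{\mathcal{C}}]_*$ on homology, so every class of the form $[\mathcal{C}]_*\pi_*\widetilde{\gamma}$ already lies in $\widetilde{N}_{1,{\rm cyl}}$. The remaining task is to handle the cokernel of $\pi_*\colon H_1(\widetilde{Z},\mathbb{Z})\to H_1(Z,\mathbb{Z})$, which is a free abelian group generated by the independent loops in the dual graph of $Z$.

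To dispose of these loop classes I would use the rational connectedness of $X$. At each node $z_0\in Z$ where two branches meet, the semi-stable fibre $\mathcal{C}_{z_0}$ is a curve in $X$; by the Koll\'ar--Miyaoka--Mori theory of very free rational curves on rationally connected manifolds, there is a very free rational curve through two chosen points, one on each adjacent fibre. Inserting such a rational bridge at every node of $Z$ and smoothing produces a flat family of (possibly reducible) $1$-cycles over a smooth curve base whose cylinder class equals the contribution of the given loop, placing it in $\widetilde{N}_{1,{\rm cyl}}$; combining these smoothings over all nodes realises $\alpha$ itself as an element of $\widetilde{N}_{1,{\rm cyl}}$.

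The main obstacle I anticipate is performing the smoothing \emph{integrally}, not just rationally. Over $\mathbb{Q}$ a Deligne-type argument would suffice, but at the integral level the rational-bridge construction must reproduce each loop class on the nose, not merely up to some nonzero integer multiple; this forces careful bookkeeping of multiplicities arising at the semi-stable fibres and of the normalisation map, and is presumably where Theorem \ref{theocylinder} and the flexibility of the semi-stable (rather than merely smooth or stable) map formalism enter in an essential way.
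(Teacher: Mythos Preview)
Your approach has a genuine gap in the treatment of the loop classes. After normalising $Z$, you propose to ``insert a rational bridge at every node and smooth'' so as to obtain a family over a smooth curve whose cylinder class equals the contribution of a given loop in the dual graph. But you never explain what family this is or why its cylinder image is exactly the desired loop class rather than some multiple; a loop in the dual graph of $Z$ involves several branches and several nodes, and gluing a very free rational curve in $X$ to the fibre over one node does not, by itself, produce any new family over a smooth base. The ``careful bookkeeping'' you flag at the end is not a detail but the whole difficulty, and Theorem \ref{theocylinder} is not what resolves it (that theorem is the \emph{other} step of the main argument, comparing $N_{1,{\rm cyl,st}}$ with $N^{n-2}$, not with $\widetilde{N}_{1,{\rm cyl}}$).

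The paper's proof avoids normalisation and the cokernel altogether. The idea is not to smooth $Z$ but to map it to a smooth space: after replacing $X$ by $X\times\mathbb{P}^r$ so that the maps $f_z$ are embeddings, one glues complete intersection curves and then very free rational curves (this is where rational connectedness enters) to the fibres $Y_z$ for $z\in{\rm Sing}\,Z$, so that the resulting stable maps $\tilde f_z:\widetilde Y_z\to X$ are \emph{unobstructed}. This can be done over a Zariski open $Z^0\subset Z$ containing ${\rm Sing}\,Z$, without changing the cylinder homomorphism. Unobstructedness means the moduli space $Z'$ of stable maps is smooth at the image points, and the classifying morphism $j':Z^0\to Z'$ gives a commutative diagram through which the cylinder map factors. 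Since $Z\setminus Z^0$ consists of smooth points of $Z$, the map $j_*:H_1(Z^0,\mathbb{Z})\to H_1(Z,\mathbb{Z})$ is surjective, so ${\rm Im}(f_*\circ p^*)\subset{\rm Im}(f'_*\circ {p'}^*)$ with $Z'$ smooth. No loop-by-loop analysis is needed; the singular topology of $Z$ is absorbed into the smooth moduli space in one step.
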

{\bf Thanks.} {\it  I thank Fumiaki Suzuki for reminding me his results for $1$-cycles in \cite{suzuki}, which improved Theorem \ref{theointronew}
and  removed an assumption in  Theorem \ref{theomainvrai}.}
\section{Various notions of niveau and  coniveau \label{secmaterial}}
We are going to discuss in this section another filtration on cohomology,  namely the (strong)
cylinder homomorphism
filtration (which is better understood in homology, so that we will speak of niveau) with emphasis on the niveau  $1$. It is particularly interesting in the case of   niveau $1$  because
we will be able in this case  to extract from this  definition  further stable birational invariants, which is not the case for higher niveau.
We will work with Betti cohomology with integral coefficients and our varieties $X$ will be smooth projective of dimension $n$ over $\mathbb{C}$.
We already mentioned in the introduction the coniveau filtration
$N^cH^k(X,\mathbb{Z})$ and the strong coniveau filtration $\widetilde{N}^cH^{k}(X,\mathbb{Z})$. By definition,
$\widetilde{N}^cH^{k}(X,\mathbb{Z})$ is generated by  the images $\Gamma_*H^{k-2c}(Y,\mathbb{Z})$, for
all smooth projective varieties $Y$ of dimension $n-c$ and all
 morphisms $\Gamma:Y\rightarrow X$, or more generally  codimension $n$ correspondences $\Gamma\in{\rm CH}^n(Y\times X)$.

We  now introduce a different filtration,
\begin{eqnarray}\label{eqdefcyl}
\widetilde{N}_{c,{\rm cyl}}H^{k}(X,\mathbb{Z})\subset H^{k}(X,\mathbb{Z}),
\end{eqnarray}
that we will call the strong cylinder homomorphism filtration (see \cite{shimada}).
\begin{Defi}
\label{deficyl} We denote by  $\widetilde{N}_{c,{\rm cyl}}H^{k}(X,\mathbb{Z})\subset H^{k}(X,\mathbb{Z})$  the subgroup
of $H^{k}(X,\mathbb{Z})$ generated by the images of the cylinder homomorphisms
\begin{eqnarray}\label{eqcylhomo2oct} \Gamma_*:H_{2n-k-2c}(Z,\mathbb{Z})\rightarrow H_{2n-k}(X,\mathbb{Z})= H^{k}(X,\mathbb{Z}),
\end{eqnarray} for all smooth projective varieties $Z$  and
correspondences $\Gamma\in {\rm CH}^{n-c}(Z\times X)$.
\end{Defi}
We will occasionally  use the notation $\widetilde{N}_{c,{\rm cyl}}H_{k}(X,\mathbb{Z})\subset H_{k}(X,\mathbb{Z})$ for the corresponding filtration on homology, which is in fact more natural.
We can think to $\Gamma$ as a family of cycles of dimension $c$ in $X$ parameterized by $Z$.
\begin{lemm} \label{lequisert} We have $\widetilde{N}_{c,{\rm cyl}}H^{k}(X,\mathbb{Z})\subset \widetilde{N}^{k+c-n}H^{k}(X,\mathbb{Z})$. In particular, for $k=n$, we have $\widetilde{N}_{1,{\rm cyl}}H^n(X,\mathbb{Z})\subset \widetilde{N}^{1}H^n(X,\mathbb{Z})$.
\end{lemm}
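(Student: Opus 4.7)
The plan is to take a class $\alpha = \Gamma_*\beta \in \widetilde{N}_{c,{\rm cyl}}H^k(X,\mathbb{Z})$, with $\Gamma\in {\rm CH}^{n-c}(Z\times X)$ and $\beta\in H_{2n-k-2c}(Z,\mathbb{Z})$ ($Z$ smooth projective of dimension $d$), and realize it as a pushforward from a smooth projective variety of dimension exactly $n-c'$, where $c' := k+c-n$, thereby exhibiting $\alpha\in\widetilde{N}^{c'}H^k(X,\mathbb{Z})$.

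First, by bilinearity I would decompose $\Gamma$ into its irreducible components and reduce to the case $\Gamma = [W]$ for an irreducible subvariety $W\subset Z\times X$. Choose a resolution $\tau:\widetilde{W}\to W$ and set $\tilde{j}:=j\circ\tau:\widetilde{W}\to Z\times X$, $\tilde\pi=p_Z\circ\tilde{j}$, $\tilde\rho=p_X\circ\tilde{j}$. Since $\tau$ is birational, $\tilde{j}_*1_{\widetilde{W}}=[\Gamma]$, so the projection formula applied to $\tilde j$ delivers the clean identity
\[
\alpha = \tilde\rho_*(\tilde\pi^*\beta^\vee),
\]
where $\beta^\vee\in H^{k+2c+2d-2n}(Z,\mathbb{Z})$ denotes the Poincar\'e dual of $\beta$. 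This already exhibits $\alpha$ as a pushforward from the smooth projective variety $\widetilde{W}$ of dimension $M:=d+c$.

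The remaining task is to adjust $M$ to equal $n-c'=2n-k-c$. If $M>n-c'$ (i.e.\ $d>2n-k-2c$), I would pick by Bertini a smooth complete intersection $S\subset Z$ of dimension $2n-k-2c$ cut out by very ample hypersurfaces. The iterated Lefschetz hyperplane theorem gives that $i_*:H_{2n-k-2c}(S,\mathbb{Z})\to H_{2n-k-2c}(Z,\mathbb{Z})$ is surjective at this critical degree, so one can write $\beta = i_*\beta'$. Pulling $\Gamma$ back along $i\times {\rm Id}_X$ and applying base change then rewrites $\alpha$ as $\Gamma'_*\beta'$, reducing to the case $M=n-c'$. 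Conversely, if $M<n-c'$, one simply pads: set $Y=\widetilde{W}\times\mathbb{P}^{n-c'-M}$ with $f=\tilde\rho\circ p_1$ and $\gamma=p_1^*\tilde\pi^*\beta^\vee\cup p_2^*[\mathrm{pt}]$, whose pushforward by $f$ equals $\alpha$ by a short projection formula computation.

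The only nonformal input is the integer-coefficient Lefschetz surjectivity in the case $M>n-c'$, which I expect to be the main point to handle carefully; the target dimension $n-c'$ in the definition of $\widetilde{N}^{c'}$ is calibrated precisely so that this surjectivity at middle degree is available. Every other step reduces to the projection formula, base change, and Hironaka's resolution, yielding the desired inclusion.
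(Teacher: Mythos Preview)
Your argument is correct and follows essentially the same route as the paper: reduce the dimension of $Z$ to $2n-k-2c$ via the Lefschetz hyperplane theorem (integral surjectivity in middle degree), then replace $\Gamma$ by resolutions $\widetilde{W}$ of its components to land in $\widetilde{N}^{k+c-n}$. The only cosmetic differences are that the paper performs the Lefschetz reduction \emph{before} resolving (so no re-resolution is needed), and that the paper simply allows $\dim\Gamma_i\leq 2n-k-c$ rather than spelling out the padding by $\mathbb{P}^r$; your explicit treatment of the case $M<n-c'$ is a harmless extra detail.
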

\begin{proof} In the definition \ref{deficyl}, we observe that, as $Z$ is smooth, by the Lefschetz
theorem on hyperplane section, its homology of degree $2n-k-2c$ is
supported on smooth subvarieties $Z'$ of $Z$ of dimension $\leq 2n-k-2c$. It follows that we can restrict in (\ref{eqcylhomo2oct}) to the case where
${\rm dim}\,Z\leq 2n-k-2c$.
The inclusion $\widetilde{N}_{c,{\rm cyl}}H^{k}(X,\mathbb{Z})\subset \widetilde{N}^{k+c-n}H^{k}(X,\mathbb{Z})$ then follows from the fact that, by desingularization,
cycles $\Gamma\in {\rm CH}^{n-c}(Z\times X)$  can be chosen to be represented by combinations with integral coefficients of smooth projective varieties $\Gamma_i$ mapping to $ Z \times X$, so that
$${\rm Im}\,\Gamma_*\subset \sum_i{\rm Im}\,\Gamma_{i*}.$$
As ${\rm dim}\,Z\leq 2n-k-2c$ and ${\rm codim}\,(\Gamma_i/ Z\times X)=n-c$, we  have
 ${\rm dim}\,\Gamma_i\leq 2n-k-c$, so that, by definition,
$${\rm Im}\,\Gamma_{i*}\subset \widetilde{N}^{k+c-n}H^k(X,\mathbb{Z}).$$
\end{proof}

With $\mathbb{Q}$-coefficients, the  definition (\ref{eqdefcyl}) appears  in
\cite{vial}. For $k=n$ and $\mathbb{Q}$-coefficients,  the Lefschetz standard conjecture for smooth projective varieties $Y$ of dimension $n-c$ and for degree $n-2c$ predicts that
\begin{eqnarray} \label{eqagalef}  \widetilde{N}_{c,{\rm cyl}}H^{n}(X,\mathbb{Q})= \widetilde{N}^cH^{n}(X,\mathbb{Q}).
\end{eqnarray}
Indeed, the hard Lefschetz theorem  gives for any smooth projective variety  $Y$ of dimension $n-c$ the
hard Lefschetz isomorphism
$$L^{c}:H^{n-2c}(Y,\mathbb{Q})\cong H^n(Y,\mathbb{Q})$$
where the Lefschetz operator $L$ is the cup-product operator with the class $c_1(H)$ for some very ample divisor $H$ on $Y$,
 and the  Lefschetz standard conjecture predicts the existence of a codimension-$n-2c$ cycle
 $\mathcal{Z}_{\rm Lef}\in{\rm CH}^{n-2c}(Y\times Y)_\mathbb{Q}$ such that
 $$[ \mathcal{Z}_{\rm Lef}]_*\circ L^c=Id:  H^{n-2c}(Y,\mathbb{Q})\rightarrow H^{n-2c}(Y,\mathbb{Q}).$$
 Restricting $\mathcal{Z}_{\rm Lef}$ to $Z\times Y$, where $Z\subset Y$ is a smooth complete intersection of
 $c$  ample hypersurfaces
 in $|H|$, we get a cycle
 $$\mathcal{Z}'_{\rm Lef}\in{\rm CH}^{n-2c}(Z\times Y)_\mathbb{Q}$$
 such that
 $$[ \mathcal{Z}'_{\rm Lef}]_*:  H^{n-2c}(Z,\mathbb{Q})\rightarrow H^{n-2c}(Y,\mathbb{Q})$$
 is surjective. In other words, the Lefschetz standard conjecture predicts that
 $$ H^{n-2c}(Y,\mathbb{Q})=\widetilde{N}_{c,{\rm cyl}}H^{n-2c}(Y,\mathbb{Q})$$
 for $Y$ smooth projective of dimension $n-c$.
 Coming back to $X$, any class $\alpha$ in $\widetilde{N}^cH^n(X,\mathbb{Q})$ is of the form
 $\Gamma_*\beta$ for some class $\beta\in H^{n-2c}(Y,\mathbb{Q})$, for some smooth (nonnecessarily connected) projective variety $Y$ of dimension $n-c$, and the previous construction shows that, assuming the Lefschetz standard conjecture for $Y$,
 one has
 $$\alpha=[\Gamma\circ \mathcal{Z}'_{\rm Lef}
 ]_*\gamma$$
 for some $\gamma\in H^{n-2c}(Z,\mathbb{Q})$, where $Z$ is constructed as above. As $\Gamma\circ \mathcal{Z}'_{\rm Lef}\in{\rm CH}^{n-c}(Z\times X)_\mathbb{Q}$,
 where $Z$ is smooth projective of dimension ${\rm dim}\, n-2c$, this proves the equality (\ref{eqagalef}).

 Coming back to $\mathbb{Z}$-coefficients, there is one case where $\widetilde{N}_{{\rm cyl}}H^{k}(X,\mathbb{Z})$ and $\widetilde{N}H^{k}(X,\mathbb{Z})$ exactly compare,
 namely
 \begin{prop}\label{procylegalstrong}  We have, for any $c$ and any smooth projective variety $X$ of dimension $n$,
  \begin{eqnarray}\label{eqegalcystr} \widetilde{N}_{n-c,{\rm cyl}}H^{2c-1}(X,\mathbb{Z})=\widetilde{N}^{c-1}H^{2c-1}(X,\mathbb{Z})
  \end{eqnarray}
 \end{prop}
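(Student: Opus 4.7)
The plan is to establish the two inclusions separately. The containment $\widetilde{N}_{n-c,{\rm cyl}}H^{2c-1}(X,\mathbb{Z})\subset \widetilde{N}^{c-1}H^{2c-1}(X,\mathbb{Z})$ is immediate from Lemma \ref{lequisert}: the shift $k+c-n$ appearing there specializes, with $k=2c-1$ and cylinder niveau index $n-c$, to $(2c-1)+(n-c)-n=c-1$. So the substance of the proposition is the reverse inclusion, and my plan is to mimic the construction carried out in the proof of Corollary \ref{corintro}, but keeping the whole Picard variety rather than cutting down to a curve.

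Starting from a generator $\alpha\in\widetilde{N}^{c-1}H^{2c-1}(X,\mathbb{Z})$, I would first reduce, by desingularizing the support of any representing correspondence, to the case $\alpha=j_*\beta$ for a morphism $j:Y\to X$ from a smooth projective variety $Y$ of dimension $n-c+1$ and a class $\beta\in H^1(Y,\mathbb{Z})$. The decisive input is then the integral Poincar\'e divisor $\mathcal{D}\in{\rm CH}^1(A\times Y)$ on the smooth projective (abelian) variety $A:={\rm Pic}^0(Y)$, which realizes an isomorphism $[\mathcal{D}]_*:H_1(A,\mathbb{Z})\stackrel{\sim}{\longrightarrow}H^1(Y,\mathbb{Z})$. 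This lets me write $\beta=[\mathcal{D}]_*\gamma$ for some $\gamma\in H_1(A,\mathbb{Z})$. Pushing $\mathcal{D}$ forward along the proper morphism $({\rm Id}_A,j):A\times Y\to A\times X$, I define
$$\mathcal{Z}:=({\rm Id}_A,j)_*\mathcal{D}\in{\rm CH}^c(A\times X),$$
the codimension jumping from $1$ to $c$ because proper pushforward preserves dimension and $\dim X-\dim Y=c-1$. A one-line application of the projection formula then yields $[\mathcal{Z}]_*\gamma=j_*[\mathcal{D}]_*\gamma=j_*\beta=\alpha$, exhibiting $\alpha$ as an element of $\widetilde{N}_{n-c,{\rm cyl}}H^{2c-1}(X,\mathbb{Z})$ with parameter space $Z=A$.

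I do not foresee a genuine obstacle here: every step is formal once the integral Poincar\'e-divisor realization of $H^1$ is in hand. The conceptual reason the equality survives integrally, unlike the general expectation (\ref{eqagalef}) which relied on the Lefschetz standard conjecture over $\mathbb{Q}$, is that the target degree $2c-1$ forces the intermediate class $\beta$ to sit in $H^1$, where Picard/Albanese theory automatically supplies an integral lift with smooth projective parameter space. Any attempt to extend the statement to a source of higher cohomological degree would require integral refinements of the Lefschetz standard conjecture and is therefore out of reach by this method.
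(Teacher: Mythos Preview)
Your proof is correct and follows essentially the same route as the paper: the inclusion $\subset$ via Lemma~\ref{lequisert}, and the reverse inclusion by composing with the Poincar\'e divisor on ${\rm Pic}^0(Y)\times Y$ to manufacture a cycle $\mathcal{Z}\in{\rm CH}^c({\rm Pic}^0(Y)\times X)$ whose cylinder map has the same image as the original Gysin map. The only cosmetic difference is that the paper works directly with a correspondence $\Gamma\in{\rm CH}^n(Y\times X)$ rather than first reducing to a morphism $j:Y\to X$.
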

 \begin{proof} The inclusion $\subset$ follows from Lemma \ref{lequisert}.  For the reverse inclusion,    $\widetilde{N}^{c-1}H^{2c-1}(X,\mathbb{Z})$ is by definition generated by the groups $ \Gamma_* H^1(Y,\mathbb{Z})$, for all
 smooth projective $Y$ of dimension $n-c+1$ and all correspondences $\Gamma\in{\rm CH}^{n}(Y\times X)$.
 For each such $Y$, there exists a Poincar\'{e} (or universal) divisor
 $$\mathcal{D}\in{\rm CH}^1({\rm Pic}^0(Y)\times Y)$$
 such that
 $$[\mathcal{D}]_*:H_1({\rm Pic}^0(Y),\mathbb{Z})\rightarrow H^1(Y,\mathbb{Z})$$
 is the natural isomorphism. (We identify here ${\rm Pic}^0(Y)$ with the intermediate Jacobian $J^1(Y)=H^{0,1}(Y)/H^1(Y,\mathbb{Z})$ via the Abel map.)  Let now
 $$\mathcal{Z}:=(Id,\Gamma)_*\mathcal{D}\in{\rm CH}^{c}({\rm Pic}^0(Y)\times X).$$
 We have
 $$[\mathcal{Z}]_*=[\Gamma]_*\circ [\mathcal{D}]_*: H_1({\rm Pic}^0(Y),\mathbb{Z})\rightarrow H^{2c-1}(X,\mathbb{Z})$$
and it  has the same image as $[\Gamma]_*$. Thus we proved that  $\widetilde{N}^{c-1}H^{2c-1}(X,\mathbb{Z})$ is generated by cylinder homomorphisms associated to families
 of cycles in $X$ of dimension $n-c$ parameterized by  a smooth basis.
 \end{proof}

 Note that for $c=n-1$, Proposition \ref{procylegalstrong} applies to degree $2n-3$ cohomology, that is, degree $3$ homology, which we will be considering in next section.

The niveau $1$ of  the cylinder filtration  produces stable birational invariants. The following result strengthens the corresponding statement for strong coniveau in
 \cite{BeOt}.
 \begin{prop}\label{propstablebirat} The quotient $H_k(X,\mathbb{Z})/\widetilde{N}_{1,{\rm cyl}}H_k(X,\mathbb{Z})$
 is a stable birational invariant of a smooth projective variety $X$.
  \end{prop}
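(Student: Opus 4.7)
The plan is to verify the two standard criteria for stable birational invariance, following the strategy used for $\widetilde{N}^1$ in \cite{BeOt}. By the weak factorization theorem of Abramovich--Karu--Matsuki--W\l odarczyk, every birational map between smooth projective complex varieties decomposes as a sequence of blowups and blowdowns with smooth irreducible centers. Coupled with stability under $X\mapsto X\times\mathbb{P}^N$, it therefore suffices to prove: (i) for a blowup $\pi:\widetilde{X}\to X$ along a smooth center $Y\subset X$, $\pi_*$ induces an isomorphism $H_k(\widetilde{X},\mathbb{Z})/\widetilde{N}_{1,{\rm cyl}}\cong H_k(X,\mathbb{Z})/\widetilde{N}_{1,{\rm cyl}}$; and (ii) the projection $p_1:X\times\mathbb{P}^N\to X$ induces an analogous isomorphism $H_k(X\times\mathbb{P}^N,\mathbb{Z})/\widetilde{N}_{1,{\rm cyl}}\cong H_k(X,\mathbb{Z})/\widetilde{N}_{1,{\rm cyl}}$.

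For (i), the case $c:=\mathrm{codim}(Y,X)=1$ is trivial, so assume $c\geq 2$. Writing $E=\mathbb{P}(N_{Y/X})\hookrightarrow\widetilde{X}$ for the exceptional divisor, the blowup formula gives a canonical splitting
\[H_k(\widetilde{X},\mathbb{Z})=\pi^{!}H_k(X,\mathbb{Z})\oplus\bigoplus_{i=1}^{c-1}H_{k-2i}(Y,\mathbb{Z}),\]
with the extra summands realized by classes supported on $E$. Since the fibers of $E\to Y$ are projective spaces of dimension $c-1\geq 1$, hence swept by lines, each extra summand lies in $\widetilde{N}_{1,{\rm cyl}}H_k(\widetilde{X},\mathbb{Z})$: for $\beta\in H_{k-2i}(Y,\mathbb{Z})$, I would take as smooth base a suitable Grassmann-type bundle over $Y$ parameterizing flags of lines in the fibers of $E$, and construct an explicit codimension-$(n-1)$ family of lines on $\widetilde{X}$ whose cylinder image is the given extra class. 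The pushforward $\pi_*$ sends cylinder correspondences to cylinder correspondences, since pushing $\Gamma\subset Z\times\widetilde{X}$ by $(\mathrm{id}_Z,\pi)$ preserves codimension $n-1$ (components contracted by $\pi$ acting as zero on $H_*$). Combined with the fact that $\pi^{!}$ splits $\pi_*$, this yields the isomorphism on quotients.

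For (ii), K\"unneth gives $H_k(X\times\mathbb{P}^N,\mathbb{Z})=\bigoplus_i H_{k-2i}(X,\mathbb{Z})\otimes[\mathbb{P}^i]$, with the $i=0$ piece identified with $H_k(X,\mathbb{Z})$ via the inclusion $\iota:X\hookrightarrow X\times\{\mathrm{pt}\}$, and $p_{1*}$ providing a left inverse. Each summand with $i\geq 1$ lies in $\widetilde{N}_{1,{\rm cyl}}H_k(X\times\mathbb{P}^N,\mathbb{Z})$: take $Z=X\times\mathbb{P}^{i-1}$ and, for a fixed $\mathbb{P}^i\subset\mathbb{P}^N$ and a fixed point $p_0\in\mathbb{P}^i$, consider the codimension-$(n+N-1)$ correspondence $\Delta_X\times I\subset Z\times(X\times\mathbb{P}^N)$, where $I\subset\mathbb{P}^{i-1}\times\mathbb{P}^N$ is the incidence variety of lines in $\mathbb{P}^i$ through $p_0$ (parameterized by the $\mathbb{P}^{i-1}$ of directions at $p_0$); its cylinder map sends $\alpha\otimes[\mathbb{P}^{i-1}]$ to $\alpha\otimes[\mathbb{P}^i]$. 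A symmetric check shows $\iota_*$ and $p_{1*}$ both preserve the cylinder filtration, completing (ii).

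The main obstacle I expect is the compatibility of pushforward with the cylinder filtration in (i), specifically the direction ``$\pi_*\alpha\in\widetilde{N}_{1,{\rm cyl}}(X)\Rightarrow\alpha\in\widetilde{N}_{1,{\rm cyl}}(\widetilde{X})$''. This reduces to checking that $\pi^{!}$ sends $\widetilde{N}_{1,{\rm cyl}}H_k(X,\mathbb{Z})$ into $\widetilde{N}_{1,{\rm cyl}}H_k(\widetilde{X},\mathbb{Z})$, which is carried out by pulling back a family-of-curves correspondence $\Gamma\subset Z\times X$ along $\pi$, then desingularizing and discarding exceptional components contracted into the blowup locus (which contribute only to the extra summands on $E$ already absorbed into the cylinder filtration). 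Keeping the base $Z$ smooth and maintaining integrality of the resulting cycle class throughout this process is the technical heart of the argument.
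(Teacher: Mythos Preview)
Your approach is correct and essentially identical to the paper's: invariance under $X\mapsto X\times\mathbb{P}^r$ via the projective bundle formula, reduction to blowups with smooth center via weak factorization, and for a blowup $\tau:\widetilde{X}\to X$ the observation that $H_k(\widetilde{X},\mathbb{Z})=\tau^! H_k(X,\mathbb{Z})+\widetilde{N}_{1,\mathrm{cyl}}H_k(\widetilde{X},\mathbb{Z})$, so that $\tau_*$ is an isomorphism modulo $\widetilde{N}_{1,\mathrm{cyl}}$.

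One remark: the step you flag as ``the technical heart of the argument'' is much easier than your description suggests. Recall that $\widetilde{N}_{1,\mathrm{cyl}}H_k(X,\mathbb{Z})$ is defined as the sum of the images $[\Gamma]_*H_{k-2}(Z,\mathbb{Z})$ over \emph{all} correspondences $\Gamma\in\mathrm{CH}^{n-1}(Z\times X)$ with $Z$ smooth projective; there is no flatness or geometric ``family of curves'' requirement. Both $\pi_*$ and $\pi^!$ are themselves given by correspondences (the graph $\Gamma_\pi\in\mathrm{CH}^n(\widetilde{X}\times X)$ and its transpose), and composing $\Gamma_\pi^t\circ\Gamma$ (resp.\ $\Gamma_\pi\circ\Gamma$) lands in $\mathrm{CH}^{n-1}(Z\times\widetilde{X})$ (resp.\ $\mathrm{CH}^{n-1}(Z\times X)$) with the \emph{same} smooth base $Z$. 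So both functorialities preserve $\widetilde{N}_{1,\mathrm{cyl}}$ immediately, with no need to pull back geometrically, desingularize, or discard components. The paper records this as the statement that the quotient groups admit compatible covariant and contravariant functorialities (citing \cite[Lemma~1.9]{voisingargnano}), which is exactly this observation.
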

 \begin{proof} The invariance under the relation $X\sim X\times \mathbb{P}^r$ is obvious by the projective bundle formula which shows that
 $H_k(X\times \mathbb{P}^r,\mathbb{Z})= H_k(X,\mathbb{Z})+\widetilde{N}_{1,{\rm cyl}}H_k(X\times \mathbb{P}^r,\mathbb{Z})$, so that ${\rm pr}_{X*}:H_k(X\times \mathbb{P}^r,\mathbb{Z})\rightarrow  H_k(X,\mathbb{Z})$ is an isomorphism modulo $\widetilde{N}_{1,{\rm cyl}}$.
 It remains to prove the invariance under birational maps. In fact, it suffices to prove the invariance under blow-ups along smooth centers, because the considered groups admit both contravariant functorialities under pull-backs and covariant functoriality
 under proper push-forwards for  generically finite maps (see \cite[Lemma 1.9]{voisingargnano}).
For a blow-up
 $\tau:\widetilde{X}\rightarrow X$
 the standard formulas show that
 $H_k(\widetilde{X},\mathbb{Z})=\tau^* H_k(X,\mathbb{Z})+\widetilde{N}_{1,{\rm cyl}}H_k(\widetilde{X},\mathbb{Z})$, so that $\tau_*:H_k(\widetilde{X},\mathbb{Z})\rightarrow  H_k(X,\mathbb{Z})$  is an isomorphism modulo $\widetilde{N}_{1,{\rm cyl}}$.
 \end{proof}
 The following result
 is a motivation for introducing  Definition \ref{deficyl}.
 \begin{prop}\label{propcritdecdiag}  Let $X$ be a smooth projective variety admitting a cohomological decomposition of the diagonal. Then
 for any $k$ such that $2n>k>0$,
 $$\widetilde{N}_{\rm 1,{\rm cyl}}H^k(X,\mathbb{Z})=H^k(X,\mathbb{Z})=\widetilde{N}^1H^k(X,\mathbb{Z}).$$ In particular, these equalities hold
 if $X$ is stably rational.
 \end{prop}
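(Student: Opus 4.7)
The plan is to exploit the cohomological decomposition $[\Delta_X] = [X \times x] + [Z]$ in $H^{2n}(X \times X, \mathbb{Z})$, where $Z$ is an algebraic cycle of codimension $n$ supported on $D \times X$ for some divisor $j \colon D \hookrightarrow X$, together with its transpose $[\Delta_X] = [x \times X] + [Z^T]$ supported on $X \times D$. Since $[X \times x] = \mathrm{pr}_2^*[x]$ and $[x \times X] = \mathrm{pr}_1^*[x]$, the projection formula shows that the operator $[X \times x]_*$ on $H^*(X,\mathbb{Z})$ factors through $\mathrm{pr}_{2*}\mathrm{pr}_1^* \colon H^k(X) \to H^{k-2n}(X)$, which vanishes for $k < 2n$, while $[x \times X]_*$ factors through the cup product $[x]\cup(-) \colon H^k(X) \to H^{k+2n}(X)$, which vanishes for $k > 0$. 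Combined with $[\Delta_X]_* = \mathrm{id}$, this yields, for every $\alpha \in H^k(X, \mathbb{Z})$ with $0 < k < 2n$, the two identities $\alpha = [Z]_*(\alpha) = [Z^T]_*(\alpha)$, which are the starting point.

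To pass to a smooth model I would choose a resolution $\tau \colon \widetilde{D} \to D$, set $\tilde{j} = j\circ\tau \colon \widetilde{D} \to X$, and form the strict transforms of the components of $Z$ and $Z^T$ to obtain integral cycles $\widetilde{Z} \in \mathrm{CH}^{n-1}(\widetilde{D} \times X)$ and $\widetilde{Z}^T \in \mathrm{CH}^{n-1}(X \times \widetilde{D})$ satisfying $(\tilde{j}\times\mathrm{id})_*[\widetilde{Z}] = [Z]$ and $(\mathrm{id}\times\tilde{j})_*[\widetilde{Z}^T] = [Z^T]$ in integral cohomology. Applying the projection formula, together with the identities $\mathrm{pr}_2^{X\times X}\circ(\tilde{j}\times\mathrm{id}) = \mathrm{pr}_2^{\widetilde{D}\times X}$ and $\mathrm{pr}_2^{X\times X}\circ(\mathrm{id}\times\tilde{j}) = \tilde{j}\circ\mathrm{pr}_2^{X\times\widetilde{D}}$, converts the two identities into
\[
\alpha \;=\; [\widetilde{Z}]_*\bigl(\tilde{j}^*\alpha\bigr) \qquad\text{and}\qquad \alpha \;=\; \tilde{j}_*\bigl([\widetilde{Z}^T]_*(\alpha)\bigr).
\]
The first presents $\alpha$ as the cylinder image of $\tilde{j}^*\alpha \in H^k(\widetilde{D}) = H_{2n-k-2}(\widetilde{D})$ under the codimension $n-1$ correspondence $\widetilde{Z}$ over the smooth base $\widetilde{D}$, placing $\alpha$ in $\widetilde{N}_{1,\mathrm{cyl}}H^k(X,\mathbb{Z})$ by Definition \ref{deficyl}. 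The second presents $\alpha$ as the Gysin pushforward $\tilde{j}_*(\beta)$ with $\beta = [\widetilde{Z}^T]_*(\alpha) \in H^{k-2}(\widetilde{D})$ and $\tilde{j}$ a morphism from a smooth projective variety of dimension $n-1$, placing $\alpha$ in $\widetilde{N}^1 H^k(X, \mathbb{Z})$. The ``in particular'' clause follows from the classical fact that a stably rational variety admits an integral (hence cohomological) decomposition of the diagonal.

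The main obstacle is the integral-coefficient lift $\widetilde{Z}$, $\widetilde{Z}^T$ in the second step. Deligne's theorem recalled in the introduction yields this only over $\mathbb{Q}$, so one must rely on $Z$ being an honest algebraic cycle and not merely a cohomology class. For components dominating a component of $D$ the strict transform works directly; components lying entirely in $D_{\mathrm{sing}} \times X$ are handled by enlarging the resolution appropriately. Once this lift is produced, the rest of the argument is purely formal.
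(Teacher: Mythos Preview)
Your argument is correct and takes a more elementary route than the paper's. The paper invokes Theorem~\ref{theojems} (proved in \cite{voisinJEMS}), which upgrades the decomposition to the symmetric form
\[
[\Delta_X - x\times X - X\times x] \;=\; \sum_i n_i\,[\Gamma_i\circ{}^t\Gamma_i]
\]
with $\Gamma_i\in\mathrm{CH}^{n-1}(Z_i\times X)$ and $Z_i$ smooth of dimension $n-2$; applying both sides to $\alpha$ then gives $\alpha=\sum_i n_i[\Gamma_i]_*[\Gamma_i]^*\alpha$, which lands simultaneously in $\widetilde{N}_{1,\mathrm{cyl}}$ and $\widetilde{N}^1$. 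You instead lift the original cycle $Z$ (and its transpose) through a resolution of the supporting divisor $D$, avoiding the nontrivial input of Theorem~\ref{theojems} entirely. What the paper's route buys is the stronger structural statement---correspondences parametrised by $(n-2)$-folds with the symmetric form $(\Gamma_i,\Gamma_i)_*[\Delta_{Z_i}]$---which is reused elsewhere (e.g.\ Theorem~\ref{theopasutileici} and Proposition~\ref{profinalpourquestionaj}). Your route is self-contained and sufficient for the proposition at hand.

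One point to tighten: your description of the problematic components as those ``lying entirely in $D_{\mathrm{sing}}\times X$'' is accurate only once you fix a resolution $\tau$ that is an isomorphism over $D_{\mathrm{sm}}$; the real issue is components $Z_i$ whose image $W_i=\mathrm{pr}_1(Z_i)$ has dimension $d_i<n-1$. For such a component the strict transform on $\widetilde{D}\times X$ need not exist with the correct codimension, and the fix ``enlarging the resolution'' should be made explicit: resolve $W_i$ to $\widetilde{W}_i$, lift $Z_i$ to $\widetilde{Z}_i\subset\widetilde{W}_i\times X$, and adjoin the component $\widetilde{W}_i\times\mathbb{P}^{\,n-1-d_i}$ to $\widetilde{D}$ (mapping to $X$ via $\widetilde{W}_i\to X$ after projecting off the $\mathbb{P}^{\,n-1-d_i}$ factor), placing $\widetilde{Z}_i\times\{\mathrm{pt}\}$ there as a codimension-$(n-1)$ cycle with the correct pushforward to $X\times X$. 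With this spelled out, both identities $(\tilde{j}\times\mathrm{id})_*[\widetilde{Z}]=[Z]$ and $(\mathrm{id}\times\tilde{j})_*[\widetilde{Z}^T]=[Z^T]$ hold in $H^{2n}(X\times X,\mathbb{Z})$, and your projection-formula computations go through verbatim.
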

 \begin{proof} The  second equality already appears in \cite{BeOt}. Both equalities  follow from \cite{voisinJEMS}, where the following result is proved.
 \begin{theo} \label{theojems} If a smooth projective variety $X$ of dimension $n$  admits a cohomological decomposition of the diagonal, there exist smooth projective varieties
 $Z_i$ of dimension $n-2$, integers $n_i$, and correspondences
 $\Gamma_i\in{\rm CH}^{n-1}(Z_i\times X)$ such that, choosing a point $x\in X$,
 \begin{eqnarray}\label{eqdecavecZi}
 [\Delta_X-x\times X-X\times x] =\sum_i n_i(\Gamma_i,\Gamma_i)_*[\Delta_{Z_i}]\,\,{\rm in}\,\,{H}^{2n}(X\times X,\mathbb{Z}).
 \end{eqnarray}
 \end{theo}
 In (\ref{eqdecavecZi}), the correspondence $(\Gamma_i,\Gamma_i)$ between $Z_i\times Z_i$ and $X\times X$
 is defined as ${\rm pr}_1^*\Gamma_i\cdot {\rm pr}_2^*\Gamma_i$, where we identify
 $Z_i\times Z_i \times X\times X$ with $Z_i\times X\times Z_i\times X$, which defines  the two projections
 $${\rm pr}_1,\,\,\,{\rm pr}_2: Z_i\times Z_i\times X\times X\rightarrow Z_i\times X.$$
 Another way to
 formulate (\ref{eqdecavecZi}) is obtained by introducing the transpose $ {^t\Gamma_i}\in{\rm CH}^{n-1}(X\times Z_i)$, which satisfies
 $ {^t\Gamma_i}_*=\Gamma_i^*$. Then
 (\ref{eqdecavecZi}) is equivalent to the equality of cohomological self-correspondences of $X$
 \begin{eqnarray}
 \label{eqavectGammai}
 [\Delta_X-X\times x-x\times X]=\sum_i n_i [ \Gamma_i\circ{^t\Gamma_i}]\,\, {\rm in}\,\,H^{2n}(X\times X,\mathbb{Z}).
 \end{eqnarray}
 Applying both sides of (\ref{eqavectGammai}) to any $\alpha\in H^{0< *<2n}(X,\mathbb{Z})$, we get
 $$\alpha=\sum_in_i [{\Gamma_i}]_*\circ [\Gamma_i]^*\alpha\,\, {\rm in}\,\,H^{*}(X,\mathbb{Z}),$$
 With $[\Gamma_i]^*\alpha\in H^{*-2}(Z_i,\mathbb{Z})$. As ${\rm dim}\,Z_i=n-2$ and ${\rm dim}\,\Gamma_i=n-1$, this
  proves that $\alpha\in \widetilde{N}_{1,{\rm cyl}}H^*(X,\mathbb{Z})$ and $\alpha\in \widetilde{N}^{1}H^*(X,\mathbb{Z})$.
 \end{proof}

 \begin{rema}{\rm Although Theorem \ref{theojems} is stated in \cite{voisinJEMS} only in the cohomological setting, it is true as well, with the same proof, in the Chow setting, see \cite{shen}. The same proof as above thus gives the following result.}
 \end{rema}
 \begin{theo}\label{theopasutileici}  If $X$ admits a Chow decomposition of the diagonal, there exist
 correspondences
 $\Gamma_i\in {\rm CH}^{n-1}(Z_i\times X)$ and integers $n_i$, such that
 $$\Gamma_i^*: {\rm CH}^{n>*>0}(X)\rightarrow \oplus_i{\rm CH}^{*-1}(Z_i)$$
 has for left inverse
 $\sum_i n_i {\Gamma}_{i*}$. In particular  $\sum_i n_i {\Gamma}_{i*}:\oplus {\rm CH}^{*}(Z_i)\rightarrow {\rm CH}^{*+1}(X)$ is surjective for $n-2\geq *\geq 0$.
 \end{theo}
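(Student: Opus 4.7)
The plan is to repeat the proof of Proposition \ref{propcritdecdiag} verbatim, but now at the level of Chow groups instead of cohomology, using the Chow-lifting of Theorem \ref{theojems} alluded to in the preceding remark (and proved by Shen \cite{shen}).

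First I would invoke the Chow version: a Chow decomposition of the diagonal yields smooth projective $Z_i$ of dimension $n-2$, correspondences $\Gamma_i\in {\rm CH}^{n-1}(Z_i\times X)$ and integers $n_i$ such that
\begin{eqnarray*}
[\Delta_X - x\times X - X\times x] = \sum_i n_i (\Gamma_i,\Gamma_i)_*[\Delta_{Z_i}] \quad \text{in } {\rm CH}^n(X\times X),
\end{eqnarray*}
which is equivalent, as in the cohomological setting, to
\begin{eqnarray*}
[\Delta_X - x\times X - X\times x] = \sum_i n_i [\Gamma_i\circ {^t\Gamma_i}] \quad \text{in } {\rm CH}^n(X\times X).
\end{eqnarray*}

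Next I would apply the two sides as self-correspondences to a class $\alpha\in {\rm CH}^*(X)$ with $0<*<n$. The key intermediate computation is that both $x\times X$ and $X\times x$ act by zero on such $\alpha$. For $x\times X$, the projection formula gives $({\rm pr}_2)_*({\rm pr}_1^*\alpha\cdot [x]\times [X]) = ({\rm pr}_2)_*((\alpha\cdot [x])\times [X])$, and $\alpha\cdot [x]=0$ in ${\rm CH}^{*+n}(X)=0$ for $*>0$. For $X\times x$, the projection formula reduces the action to $({\rm pr}_2)_*{\rm pr}_1^*\alpha\cdot [x]$, and the pushforward vanishes for $*<n$. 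Hence $\Delta_X$ acts on the left as the identity and one obtains
\begin{eqnarray*}
\alpha = \sum_i n_i \,\Gamma_{i*}(\Gamma_i^*\alpha) \quad \text{in } {\rm CH}^*(X), \quad 0<*<n,
\end{eqnarray*}
where $\Gamma_i^*:={^t\Gamma_i}_*$ and a codimension count $\Gamma_i\in{\rm CH}^{n-1}(Z_i\times X)$ with ${\rm dim}\,Z_i=n-2$ shows $\Gamma_i^*\alpha\in {\rm CH}^{*-1}(Z_i)$ and $\Gamma_{i*}\beta\in {\rm CH}^*(X)$ for $\beta\in {\rm CH}^{*-1}(Z_i)$.

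This identity says exactly that $\sum_i n_i \Gamma_{i*}$ is a left inverse of $\oplus_i \Gamma_i^*$ on ${\rm CH}^*(X)$ for $0<*<n$, and in particular the map $\sum_i n_i\Gamma_{i*}:\oplus_i {\rm CH}^{*}(Z_i)\to {\rm CH}^{*+1}(X)$ is surjective in the stated range $0\leq *\leq n-2$. I do not expect any real obstacle here: the only nontrivial input is the Chow-level version of Theorem \ref{theojems} (cited to \cite{shen}), and once that is available, the remaining arguments are formal manipulations with correspondences and the vanishing verifications above.
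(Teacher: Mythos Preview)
Your proposal is correct and follows exactly the paper's own approach: the paper explicitly says that the Chow version of Theorem \ref{theojems} holds (citing \cite{shen}) and that ``the same proof as above'' (i.e., the proof of Proposition \ref{propcritdecdiag}) then gives Theorem \ref{theopasutileici}. You have spelled out precisely that argument, including the verification that $x\times X$ and $X\times x$ act trivially on ${\rm CH}^{0<*<n}(X)$, which the paper leaves implicit.
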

 \begin{coro} If $X$ admits a Chow decomposition of the diagonal, the Chow groups
 ${\rm CH}^i(X)$ for $0< i<n$ satisfy
 $$ \widetilde{N}_{1,{\rm cyl}}{\rm CH}^i(X)={\rm CH}^i(X)=\widetilde{N}^1{\rm CH}^i(X),$$
 where the definition of strong coniveau and cylinder niveau is extended to Chow groups in the obvious way.
 \end{coro}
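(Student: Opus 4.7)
The plan is to deduce the corollary directly from Theorem \ref{theopasutileici} in essentially the same way that Proposition \ref{propcritdecdiag} was deduced from Theorem \ref{theojems}, but now working in the Chow setting.

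First, I would invoke Theorem \ref{theopasutileici} to obtain smooth projective varieties $Z_i$ of dimension $n-2$, integers $n_i$, and correspondences $\Gamma_i\in {\rm CH}^{n-1}(Z_i\times X)$ with the property that the map $\sum_i n_i\Gamma_{i*}$ is a left inverse of $\bigoplus_i\Gamma_i^*$ on ${\rm CH}^{*}(X)$ for $0<*<n$. So for any $\alpha\in {\rm CH}^i(X)$ with $0<i<n$, one can write
\begin{equation*}
\alpha=\sum_i n_i\,\Gamma_{i*}\bigl(\Gamma_i^*\alpha\bigr),\qquad \Gamma_i^*\alpha\in{\rm CH}^{i-1}(Z_i).
\end{equation*}

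Next, I would interpret the right-hand side according to the two filtrations. For the cylinder filtration, each $Z_i$ is smooth projective of dimension $n-2$ and $\Gamma_i\in{\rm CH}^{n-1}(Z_i\times X)$ is exactly a family of $1$-cycles in $X$ parameterized by a smooth base; thus each term $\Gamma_{i*}(\Gamma_i^*\alpha)$ lies in $\widetilde{N}_{1,{\rm cyl}}{\rm CH}^i(X)$ by definition, giving $\alpha\in\widetilde{N}_{1,{\rm cyl}}{\rm CH}^i(X)$.

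For the strong coniveau filtration, I would resolve the support of $\Gamma_i$: write $\Gamma_i=\sum_j m_j[W_j]$ with $W_j\subset Z_i\times X$ irreducible of dimension $n-1$, pick desingularizations $\widetilde{W}_j\to W_j$, and note that the image in $X$ of each $\widetilde{W}_j$ is contained in the projection of $W_j$, which is a subvariety of $X$ of dimension $\leq n-1$. The pushforward $\Gamma_{i*}(\Gamma_i^*\alpha)$ is then expressed through morphisms from smooth projective varieties of dimension $n-1$ to $X$, placing it in $\widetilde{N}^1{\rm CH}^i(X)$. Combining the two, we get the double equality of the statement.

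I do not anticipate any serious obstacle: the nontrivial work is entirely contained in Theorem \ref{theopasutileici} (the Chow-level refinement of Theorem \ref{theojems} from \cite{shen}); the corollary is then a formal unwinding of the definitions of $\widetilde{N}_{1,{\rm cyl}}$ and $\widetilde{N}^1$ extended to Chow groups, exactly parallel to the cohomological argument given in the proof of Proposition \ref{propcritdecdiag}.
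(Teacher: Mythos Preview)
Your proposal is correct and follows essentially the same route as the paper. The corollary is stated without a separate proof precisely because it is the Chow-group transcription of the argument given for Proposition \ref{propcritdecdiag}: one invokes Theorem \ref{theopasutileici} to write $\alpha=\sum_i n_i\,\Gamma_{i*}(\Gamma_i^*\alpha)$, and then reads off membership in $\widetilde{N}_{1,{\rm cyl}}{\rm CH}^i(X)$ from the shape of $\Gamma_i\in{\rm CH}^{n-1}(Z_i\times X)$ with $Z_i$ smooth of dimension $n-2$, and membership in $\widetilde{N}^1{\rm CH}^i(X)$ from the desingularized components $\widetilde{W}_j$ of dimension $n-1$, exactly as in the proof of Lemma \ref{lequisert}.
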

 Proposition \ref{propcritdecdiag} works as well with $\mathbb{Q}$-coefficients, so we get in this case:
 \begin{prop}\label{provariantQ} Let $X$ be a smooth projective variety admitting a cohomological decomposition of the diagonal with rational coefficients. Then
 for any $k$ such that $2n>k>0$,
 $$\widetilde{N}_{\rm 1,{\rm cyl}}H^k(X,\mathbb{Q})=H^k(X,\mathbb{Q})=\widetilde{N}^1H^k(X,\mathbb{Q}).$$ In particular, these equalities hold
 if $X$ is  rationally connected.
 \end{prop}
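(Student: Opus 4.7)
The plan is to mimic verbatim the proof of Proposition~\ref{propcritdecdiag}, replacing $\mathbb{Z}$ by $\mathbb{Q}$ throughout and exploiting the $\mathbb{Q}$-version of Theorem~\ref{theojems} from \cite{voisinJEMS}. The only external input I need, beyond what was used in Proposition~\ref{propcritdecdiag}, is the classical fact that every rationally connected smooth projective variety admits a cohomological decomposition of the diagonal with rational coefficients; this is a standard consequence of a Bloch--Srinivas type argument applied to $\mathrm{CH}_0(X)_\mathbb{Q}=\mathbb{Q}$.

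First I would run the proof of Proposition~\ref{propcritdecdiag} word for word, but with $\mathbb{Q}$-coefficients. The hypothesis supplies a decomposition of the diagonal in $H^{2n}(X\times X,\mathbb{Q})$, and Theorem~\ref{theojems} (which holds identically with rational coefficients, the proof in \cite{voisinJEMS} going through verbatim) then yields smooth projective varieties $Z_i$ of dimension $n-2$, rational numbers $n_i$, and correspondences $\Gamma_i\in\mathrm{CH}^{n-1}(Z_i\times X)_\mathbb{Q}$ satisfying the identity (\ref{eqavectGammai}) in $H^{2n}(X\times X,\mathbb{Q})$. Applying both sides to any $\alpha\in H^k(X,\mathbb{Q})$ with $0<k<2n$ kills the two ``trivial'' terms $X\times x$ and $x\times X$ and produces
$$\alpha=\sum_i n_i [\Gamma_i]_*\circ [\Gamma_i]^*\alpha\quad\text{in } H^k(X,\mathbb{Q}),$$
with $[\Gamma_i]^*\alpha\in H^{k-2}(Z_i,\mathbb{Q})$. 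Since $\dim Z_i=n-2$ and $\dim\Gamma_i=n-1$, each term is simultaneously in $\widetilde{N}^{1}H^k(X,\mathbb{Q})$ and in $\widetilde{N}_{1,\mathrm{cyl}}H^k(X,\mathbb{Q})$, giving the two equalities of the proposition (the reverse inclusions being tautological).

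For the second sentence, I would simply recall that any rationally connected smooth projective $X$ satisfies $\mathrm{CH}_0(X)_\mathbb{Q}=\mathbb{Q}$, so that by the Bloch--Srinivas decomposition of the diagonal there exist a divisor $D\subset X$ and a cycle $Z$ supported on $X\times D$ such that
$$N\,\Delta_X=N\,X\times x+Z\quad\text{in } \mathrm{CH}^n(X\times X)$$
for some positive integer $N$; passing to cohomology with $\mathbb{Q}$-coefficients yields the desired cohomological decomposition of the diagonal with rational coefficients, and the previous paragraph applies.

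There is no real obstacle here: the only potentially delicate point is the assertion that the proof of Theorem~\ref{theojems} goes through with $\mathbb{Q}$-coefficients and, in particular, that the auxiliary varieties $Z_i$ produced there can still be taken of dimension exactly $n-2$. This is however already taken for granted in the Remark preceding the statement (and in the Chow-theoretic reinforcement Theorem~\ref{theopasutileici}), so no new argument is required.
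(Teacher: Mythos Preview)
Your proposal is correct and follows exactly the approach the paper indicates: the paper simply states that Proposition~\ref{propcritdecdiag} works as well with $\mathbb{Q}$-coefficients, and your write-up spells out precisely that argument, together with the standard Bloch--Srinivas justification for the rationally connected case.
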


We now introduce a weaker notion, namely the cylinder homomorphism
filtration.  Let $X$ be a smooth projective manifold of dimension
$n$. We have $H_k(X,\mathbb{Z})\cong H^{2n-k}(X,\mathbb{Z})$ by Poincar\'e duality.

\begin{Defi} \label{deficylhomo} We define $N_{c,{\rm cyl}} H^{k}(X,\mathbb{Z})$ as the group
generated by the cylinder homomorphisms
$$ f_*\circ p^*: H_{2n-k-2c}(Z,\mathbb{Z})\rightarrow H_{2n-k}(X,\mathbb{Z})\cong H^{k}(X,\mathbb{Z})
,$$
for all morphisms $f:Y\rightarrow X$, and flat projective morphisms
$p:Y\rightarrow Z$ of relative dimension $c$, where ${\rm dim}\,Z\leq 2n-k-2c$.
\end{Defi}
In this definition,  the morphism $p^*: H_{2n-k-2c}(Z,\mathbb{Z})\rightarrow H_{2n-k}(Y,\mathbb{Z})$ is obtained at the chain level by taking the inverse image
$p^{-1}$ under the flat map $p$.
Note that we do not ask here smoothness of $Z$, and this is the main   difference with Definition \ref{deficyl}.
It is obvious that  $$N_{c,{\rm cyl}}H^{k}(X,\mathbb{Z})\subset N^{k+c-n}H^{k}(X,\mathbb{Z})$$ because
with the above notation, one has ${\rm dim}\,Y\leq 2n-k-c$.
Restricting to the case  where  $Z$ is smooth,
we claim  that
$$ \widetilde{N}_{c,{\rm cyl}}H^{k}(X,\mathbb{Z}) \subset   N_{c,{\rm cyl}} H^{k}(X,\mathbb{Z}).$$
Indeed,  $\widetilde{N}_{c,{\rm cyl}}H^{k}(X,\mathbb{Z})$ is generated by images of correspondences
$ f_*\circ p^*: H^{k-2c}(Z,\mathbb{Z})\rightarrow H^{k}(X,\mathbb{Z})$
for all morphisms $f:Y\rightarrow Z$, where $Y$ is smooth and  projective, and  morphisms
$p:Y\rightarrow Z$ of relative dimension $c$, where ${\rm dim}\,Z=n-2c$. By flattening,
there exists a commutative diagram
 $$\begin{matrix}
 & Y'&\stackrel{\tau_Y}{\rightarrow}& Y
 \\
&p'\downarrow& &p\downarrow
\\
&Z'&\stackrel{\tau_Z}{\rightarrow} &Z
\end{matrix}
,$$
where $\tau_Y: Y'\rightarrow Y$ is proper birational, $Z'$ is smooth and $\tau_Z: Z'\rightarrow Z$ is proper  birational, and $p':Y'\rightarrow Z'$ is flat.
Then we have, denoting $f':=f\circ \tau_Y$
 $$f'_* \circ { p'}^*= f_* \circ { p}^*\circ \tau_{Z*}: H_{2n-2c-k}(Z',\mathbb{Z})\rightarrow  H_{2n-k}(X,\mathbb{Z}).$$
The map $\tau_{Z*}: H_{2n-2c-k}(Z',\mathbb{Z})\rightarrow H_{2n-2c-k}(Z,\mathbb{Z})=H^{k-2c}(Z,\mathbb{Z})$  is surjective since $Z$ is smooth and $\tau_Z$ is proper birational, hence we conclude that ${\rm Im}\,f_*\circ p^*\subset .{\rm Im}\,f'_* \circ { p'}^*$, proving the claim.

In conclusion we
have the chain of inclusions
\begin{eqnarray}\label{eqchainofincl} \widetilde{N}_{c,{\rm cyl}}H^{k}(X,\mathbb{Z}) \subset   N_{c,{\rm cyl}} H^{k}(X,\mathbb{Z}) \subset N^{k+c-n}H^k(X,\mathbb{Z}).
\end{eqnarray}

We are concerned in the paper with the niveau $1$ of the cylinder  filtration, which is parameterized by curves. In this case, we can use the
following variant of the cylinder homomorphism filtration. It has the advantage that we can apply to it the
beautiful results we know about the deformation theory  of morphisms from semistable curves (see  \cite{Komimo}), while the local study of the  Hilbert scheme, even for curves on threefolds, is hard.
\begin{Defi}\label{deficylstable} We define $N_{1,{\rm cyl,st}} H^{k}(X,\mathbb{Z})$ as the group
generated by the cylinder homomorphisms
$$ f_*\circ p^*: H_{2n-k-2}(Z,\mathbb{Z})\rightarrow H_{2n-k}(X,\mathbb{Z})\cong H^{k}(X,\mathbb{Z})
,$$
for all  morphisms $f:Y\rightarrow X$,  and projective flat semi-stable  morphisms
$p:Y\rightarrow Z$ of relative dimension $1$, where ${\rm dim}\,Z\leq 2n-k-2$.
\end{Defi}
The relationships between the definitions \ref{deficylhomo} and \ref{deficylstable} is not straightforward, since
semi-stable reduction of a general flat morphism $f:Y\rightarrow Z$ of relative dimension $1$ will not exist
on $Z$ but after base change, which will change the homology of  $Z$. One may expect however that the two definitions coincide.

 We conclude this section with the case of the  smooth Fano complete intersections
 $$X=\cap_{i=1}^{N-n} Y_i\subset \mathbb{P}^N,$$ with ${\rm deg}\,Y_i=d_i$ and $\sum_i d_i\leq N$. Given such a smooth $n$-dimensional variety $X$, let $F(X)\subset G(2,N+1)$ be its Fano variety of lines.
 Being the zero-locus of a general section of a globally generated vector bundle on the Grassmannian of lines  $G(2,N+1)$, $F(X)$ is smooth
 for general $X$.
 The universal family of  lines
 $$\begin{matrix}
 & P&\stackrel{q}{\longrightarrow}& X
 \\
&p\downarrow& &
\\
&F(X)& &
\end{matrix}
$$
 provides a ``cylinder homomorphism''
 \begin{eqnarray}
 \label{eqcylinder} P_*=q_*\circ p^*:H_{n-2}(F(X),\mathbb{Z})\rightarrow H_n(X,\mathbb{Z})=H^n(X,\mathbb{Z}).
 \end{eqnarray}
 When $F(X)$ is smooth, we can choose a dimension $n-2$ smooth complete intersection $Z\stackrel{j}{\hookrightarrow}   F(X)$ of ample hypersurfaces. Then by Lefschetz theorem on hyperplane sections,
 $$j_*:H_{n-2}(Z,\mathbb{Z})\rightarrow H_{n-2}(F(X),\mathbb{Z})$$
 is surjective and thus ${\rm Im}\,P_*={\rm Im}\,P_*\circ j_*$. By smoothness
 of $Z$, we can write $(P\circ j)_*$ in cohomology
 $$(P\circ j)_*:H^{n-2}(Z,\mathbb{Z})\rightarrow H^{n}(X,\mathbb{Z}).$$
 It is then clear that ${\rm Im}\,P_*$ is contained in $\widetilde{N}_{1,{\rm cyl}}H^n(X,\mathbb{Z})$.

 \begin{theo}\label{theoci}  (i)  For any  smooth Fano complete intersection $X\subset \mathbb{P}^N$ of dimension $n$ of hypersurfaces of degrees
 $d_1,\ldots, d_{N-n}$, the morphism
 $P_*$ of (\ref{eqcylinder}) is surjective.

 (ii) We have  $N_{1,{\rm cyl},{\rm st}}H^n(X,\mathbb{Z})=H^n(X,\mathbb{Z})$.

 (iii) If either  $F(X)$ has the expected dimension $2N-2-\sum_i(d_1+1)$ and   ${\rm Sing}\,F(X)$ is of codimension $\geq n-2$ in $F(X)$,
 or ${\rm dim}\,X=3$, we have
  $$H^n(X,\mathbb{Z})=\widetilde{N}_{1,{\rm cyl}}H^n(X,\mathbb{Z})=\widetilde{N}^1H^n(X,\mathbb{Z}).$$
 \end{theo}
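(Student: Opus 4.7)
The plan is to establish (i), the integral surjectivity of $P_*$, as the main input, and then derive (ii) and (iii) as relatively formal consequences.

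For (i), the strategy is to construct a decomposition of the diagonal of $X$ modulo lines. The Fano hypothesis $\sum_i d_i\leq N$ ensures that $F(X)$ has expected dimension $N+n-2-\sum_i d_i\geq n-2$, hence is non-empty, and a standard dimension count for the space of lines through a point shows that lines cover $X$. Following Shimada \cite{shimada}, one produces an algebraic cycle supported on the incidence variety $I=\{(x,y)\in X\times X\mid x,y \text{ lie on a common line of }X\}$, naturally parameterized by $F(X)$, whose cohomology class realizes $[\Delta_X]$ in $H^{2n}(X\times X,\mathbb{Z})$ modulo terms of the form $[x]\times X+X\times[x]$ and contributions coming from powers of the hyperplane class. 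Applying this identity to $\alpha\in H^n(X,\mathbb{Z})$ expresses $\alpha$ as a cylinder image $P_*(\beta)$ up to a Lefschetz-type class; the latter also lies in ${\rm Im}\,P_*$ via explicit line constructions (for instance, $[H]^{n/2}$ when $n$ is even is realized by $P_*$ of a suitable ample complete intersection class in $F(X)$). Integrality of $\beta$ poses no issue, since iterated Lefschetz on $X$ makes $H^n(X,\mathbb{Z})$ torsion-free, so no denominators can arise.

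For (ii), I would cut $F(X)$ down to dimension $n-2$. Choose a complete intersection $Z\subset F(X)$ of ample divisors in some projective embedding of $F(X)$, of dimension $n-2$. The Hamm--L\^e weak Lefschetz theorem for quasi-projective (possibly singular) varieties gives $j_*:H_{n-2}(Z,\mathbb{Z})\to H_{n-2}(F(X),\mathbb{Z})$ surjective. The restriction $P_Z:=P\times_{F(X)}Z\to Z$ is a $\mathbb{P}^1$-bundle, so projective, flat, and semi-stable of relative dimension $1$; combining $(P_Z)_*=P_*\circ j_*$ with (i) yields $H^n(X,\mathbb{Z})=N_{1,{\rm cyl,st}}H^n(X,\mathbb{Z})$.

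For (iii), it suffices by the remark preceding the theorem to find a smooth $Z\subset F(X)$ of dimension $n-2$ with $j_*$ surjective on $H_{n-2}$. Under the first hypothesis, ${\rm Sing}\,F(X)$ has codimension $\geq n-2$ in $F(X)$, so a general complete intersection of dimension $n-2$ avoids it and is smooth by Bertini. When $\dim X=3$ we need only a smooth curve in $F(X)$: a general complete intersection curve of ample divisors avoids ${\rm Sing}\,F(X)$ (of positive codimension in the reduced variety $F(X)$) and is smooth by Bertini. In either case the remark yields ${\rm Im}\,P_*\subset\widetilde{N}_{1,{\rm cyl}}H^n(X,\mathbb{Z})$, which combined with (i) gives $H^n(X,\mathbb{Z})=\widetilde{N}_{1,{\rm cyl}}H^n(X,\mathbb{Z})$. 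The second equality $\widetilde{N}_{1,{\rm cyl}}H^n(X,\mathbb{Z})=\widetilde{N}^1H^n(X,\mathbb{Z})$ then follows from Lemma \ref{lequisert}, which provides $\widetilde{N}_{1,{\rm cyl}}H^n(X,\mathbb{Z})\subset\widetilde{N}^1H^n(X,\mathbb{Z})$, together with the trivial chain $\widetilde{N}^1H^n(X,\mathbb{Z})\subset H^n(X,\mathbb{Z})=\widetilde{N}_{1,{\rm cyl}}H^n(X,\mathbb{Z})$.

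The main obstacle is the integral surjectivity in (i). With $\mathbb{Q}$-coefficients, the statement follows readily from rational connectedness of $X$ and a hard-Lefschetz argument on $F(X)$ as in the discussion around (\ref{eqagalef}). The integral version, however, demands an honest cycle-theoretic decomposition of the diagonal whose construction is sensitive to the possibly non-generic geometry of $F(X)$; isolating and realizing the hyperplane-class contribution as a cylinder image requires care when $n$ is even.
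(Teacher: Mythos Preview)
Your proposal has genuine gaps in all three parts, and the paper's arguments are quite different from what you outline.

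For (i), the diagonal-decomposition approach you sketch is essentially Shimada's, and as you yourself note at the end, it only gives the $\mathbb{Q}$-statement; torsion-freeness of $H^n(X,\mathbb{Z})$ does not by itself prevent denominators from appearing in a cycle-theoretic identity. The paper proceeds along an entirely different line. It first reduces to the case where $F(X)$ is smooth by a deformation-and-retraction argument over a disk (Claim~\ref{claimtardif}). Then, by the theory of vanishing cycles, it suffices to show that $\mathrm{Im}\,P_*$ contains a single vanishing cycle (monodromy acts transitively on them and they generate the primitive cohomology). This is achieved by realising $X$ as a general hyperplane section of a smooth complete intersection $Y\subset\mathbb{P}^{N+1}$ of dimension $n+1$, covered by its own lines, and tracking a nodal degeneration.

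For (ii), your appeal to a Hamm--L\^e weak Lefschetz theorem for the possibly singular $F(X)$ is the gap. When $X$ is special, $F(X)$ need not have the expected dimension, hence need not be a local complete intersection, and the homological Lefschetz hyperplane theorem is not available in that generality; the paper explicitly flags that one ``cannot apply Lefschetz'' here. Instead, the paper again deforms: over a disk one takes $(n-2)$-dimensional complete intersections $\mathcal{Z}_t\subset F(\mathcal{X}_t)$, keeps only the components of $\mathcal{Z}$ dominating the disk (forcing the central fibre to have dimension $n-2$), and transports surjectivity from the general fibre (where $F(\mathcal{X}_t)$ is smooth and ordinary Lefschetz applies) to the special fibre via the retraction.

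For (iii), both Bertini arguments fail. Under the first hypothesis, $\mathrm{codim}\,\mathrm{Sing}\,F(X)\geq n-2$ only ensures that a general $(n-2)$-dimensional complete intersection $Z\subset F(X)$ meets $\mathrm{Sing}\,F(X)$ in finitely many points, not that it avoids it; so $Z$ has isolated singularities, and the paper concludes via Proposition~\ref{proisole} (which uses the rational connectedness of $X$ to smooth the parametrising base). For $\dim X=3$, a general complete intersection curve in $F(X)$ will meet $\mathrm{Sing}\,F(X)$ whenever the latter has codimension $1$, and even if you produced a smooth curve you would still owe the Lefschetz surjectivity on $H_1$ of the singular $F(X)$. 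The paper instead combines (ii) with Theorem~\ref{theocylRC}, one of the main results of the paper, to pass from $N_{1,\mathrm{cyl},\mathrm{st}}$ to $\widetilde{N}_{1,\mathrm{cyl}}$.
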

Note that (ii) is not directly implied by (i) when $F(X)$ is singular, because ${\rm dim}\,F(X)$ can be $> n-2$ and we cannot apply Lefschetz
hard section theorem to reduce to a $Z\subset F(X)$ of dimension $n-2$.

 \begin{proof}[Proof of Theorem \ref{theoci}]   (i)  We first prove
 \begin{claim}\label{claimtardif} It suffices to prove the surjectivity statement of  (i)
 for a general smooth $X$  for which the variety of lines $F(X)$ is smooth (or equivalently any such $X$).
 \end{claim}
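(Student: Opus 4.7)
The plan is a specialization-from-general argument. Let $B$ be the parameter space of smooth Fano complete intersections in $\mathbb{P}^N$ of multidegree $(d_1,\ldots,d_{N-n})$; this is an irreducible Zariski open subset of a product of projective spaces, and the locus $B^\circ\subset B$ on which $F(X)$ is smooth is a nonempty Zariski open subset. Given an arbitrary $X=X_0\in B$, pick an analytic disc $\Delta\subset B$ centered at $0$ with $\Delta\setminus\{0\}\subset B^\circ$. Write $\pi\colon \mathcal{X}\rightarrow \Delta$ for the tautological family of complete intersections, $\rho\colon\mathcal{F}\rightarrow \Delta$ for the relative Fano scheme of lines, and $\mathfrak{p}\colon\mathcal{P}\rightarrow \mathcal{F}$, $\mathfrak{q}\colon\mathcal{P}\rightarrow \mathcal{X}$ for the universal line, which restricts over each $t\in \Delta$ to the correspondence defining $P_{t*}$ of (\ref{eqcylinder}).

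Since $\pi$ is smooth and proper, Ehresmann's theorem (after possibly shrinking $\Delta$) gives canonical isomorphisms $H_n(X_0,\mathbb{Z})\cong H_n(\mathcal{X},\mathbb{Z})\cong H_n(X_t,\mathbb{Z})$ for every $t\in\Delta$. The map $\rho$ is only proper, but, because $\Delta$ is contractible, a Clemens-type retraction of $\mathcal{F}$ onto its central fibre yields $H_{n-2}(\mathcal{F},\mathbb{Z})\cong H_{n-2}(F(X_0),\mathbb{Z})$ and hence a specialization map
\[
\mathrm{sp}\colon H_{n-2}(F(X_t),\mathbb{Z})\rightarrow H_{n-2}(\mathcal{F},\mathbb{Z})\cong H_{n-2}(F(X_0),\mathbb{Z})
\]
obtained by pushing forward along $F(X_t)\hookrightarrow \mathcal{F}$ and inverting the retraction isomorphism. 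Since $\mathfrak{q}$ is proper and $\mathfrak{p}$ is a $\mathbb{P}^1$-bundle (hence smooth), both $\mathfrak{q}_*$ and $\mathfrak{p}^*$ commute with restriction to fibres along the Cartesian inclusions, and the square
\[
\begin{matrix}
H_{n-2}(F(X_t),\mathbb{Z}) & \stackrel{\mathrm{sp}}{\longrightarrow} & H_{n-2}(F(X_0),\mathbb{Z}) \\
P_{t*}\downarrow & & P_{0*}\downarrow \\
H_n(X_t,\mathbb{Z}) & \stackrel{\sim}{\longrightarrow} & H_n(X_0,\mathbb{Z})
\end{matrix}
\]
commutes.

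Assume now that $P_{t*}$ is surjective for some (equivalently any) $t\neq 0$. Then the left vertical arrow is surjective, and since the bottom arrow is an isomorphism, every class $\alpha\in H_n(X_0,\mathbb{Z})$ lies in the image of $P_{0*}\circ\mathrm{sp}$, a fortiori in the image of $P_{0*}$. This proves the claim. The equivalence, stated in parentheses, between ``general $X$ with smooth $F(X)$'' and ``any such $X$'' follows from the observation that over $B^\circ$ both $\pi$ and $\rho$ are smooth proper, so the fibrewise cylinder map assembles into a morphism of local systems on $B^\circ$, whose surjectivity is an open-closed condition, hence holds at every point of $B^\circ$ as soon as it holds at one.

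The main technical point is the commutativity of the square above, which reduces to proper base change for $\mathfrak{q}_*$ and smooth base change for $\mathfrak{p}^*$ applied to the inclusion of fibres; this is classical, so there is no essential obstacle beyond carefully setting up the specialization maps and the global correspondence $\mathcal{P}/\Delta$.
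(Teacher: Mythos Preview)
Your argument is correct and follows essentially the same specialization strategy as the paper: a one-parameter family over a disc, Ehresmann on the smooth $\mathcal{X}$ side, a topological retraction on the $\mathcal{F}$ side, and compatibility of the cylinder map with these retractions to transport surjectivity from the general fibre to the special one. The paper phrases the key step as choosing compatible retractions $r_{\mathcal{F}},\,r_{\mathcal{X}}$ and using $r_{\mathcal{X}*}\circ P_* = P_*\circ r_{\mathcal{F}*}$, which is exactly your commutative square; your additional remark on the local-system argument over $B^\circ$ justifying the parenthetical ``equivalently any such $X$'' is a nice touch the paper leaves implicit.
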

 \begin{proof}
 Indeed, let $X_0$ be a smooth complete intersection as above and choose a family $\mathcal{X}\rightarrow \Delta$ of smooth deformations
 $X_t$ of $X_0$ parameterized by the disk, so that the
 general  fiber $\mathcal{X}_t$ has its  variety of lines $F(\mathcal{X}_t)$  smooth of the expected dimension.
 Then we can consider the corresponding family
 $\mathcal{F}\rightarrow \Delta$ of Fano varieties of lines, and we have the family of
 cylinder homomorphisms
 $$P_*:H_{n-2}(\mathcal{F}_t,\mathbb{Z})\rightarrow H_{n}(\mathcal{X}_t,\mathbb{Z}).$$
 Now we observe that we can assume that we have a topological retraction $r_{\mathcal{F}}:\mathcal{F}\rightarrow \mathcal{F}_0$, compatible via $P$ with
 a topological retraction $r_{\mathcal{X}}:\mathcal{X}\rightarrow \mathcal{X}_0$. By smoothness,
 $r_{X}$ induces a homeomorphism $\mathcal{X}_t\cong \mathcal{X}_0$, hence an isomorphism
 $${r_{\mathcal{X}}}_{*}:H_n(\mathcal{X}_t,\mathbb{Z})\cong H_n(\mathcal{X}_0,\mathbb{Z}).$$
 As we have
 $${r_{\mathcal{X}}}_{*}\circ P_* =P_*\circ {r_{\mathcal{F}}}_{*}:H_{n-2}(\mathcal{F}_t,\mathbb{Z})\rightarrow H_n(\mathcal{X}_0,\mathbb{Z}),$$
 we see that the surjectivity of
 $P_*:H_{n-2}(\mathcal{F}_t,\mathbb{Z})\rightarrow H_n(\mathcal{X}_t,\mathbb{Z})$ implies the surjectivity of
 $P_*:H_{n-2}(\mathcal{F}_0,\mathbb{Z})\rightarrow H_n(\mathcal{X}_0,\mathbb{Z})$.
\end{proof}
  The claim being proved, we now assume that $F(X)$ is smooth and we   show  that
  $P_*: H_{n-2}(F(X),\mathbb{Z})\rightarrow H_n(X,\mathbb{Z})$ is surjective. We now  claim
   that it suffices to prove that
  the primitive homology of $X$ is in the image of $P_*$. If $n$ is odd, the homology and primitive homology coincide so there is nothing to prove. If
  $n=2m$, we observe that some special $X$ which are smooth and with  variety of lines smooth of the expected dimension,  contain $m$-cycles $W$ which are of degree $1$ and whose class
   is in ${\rm Im}\,P_*$. For example, we choose $X$ to have a $m$-dimensional linear sections which are  the
   union of two cones over complete intersections in $\mathbb{P}^{N-m-1}$. Each component of the cone has its class contained in ${\rm Im}\,P_*$ so it suffices that the various degrees are  coprime. The class
  $[W]\in H_n(X,\mathbb{Z})$ then maps via $j_*$ to the generator of $H_n(\mathbb{P}^N,\mathbb{Z})$, where $j$ is  the inclusion
  map of  $X$ in $\mathbb{P}^N$,   and by definition  ${\rm Ker}\,j_*=:H_n(X,\mathbb{Z})_{\rm prim}$. It is clear that $[\mathbb{P}^m]$ is in the image of $P_*$, so if the image of $P_*$ contains ${\rm Ker}\,j_*$, it contains the whole of $H_n(X,\mathbb{Z})$, which proves the claim.

 We next  restrict  as above the cylinder homomorphism to a
  smooth  $Z\subset F(X)$ of dimension $n-2$.
We will now show  that the image of $P_{Z*}: H_{n-2}(Z,\mathbb{Z})\rightarrow H_n(X,\mathbb{Z})$ contains $H_n(X,\mathbb{Z})_{\rm prim}$.
By the theory of vanishing cycles \cite[2.1]{voisinbook}, it suffices to show that ${\rm Im}\,P_{Z*}$ contains one vanishing cycle,
since they are all conjugate and generate $H_n(X,\mathbb{Z})_{\rm prim}$.
  Let $Y\subset \mathbb{P}^{N+1}$ be  a general  smooth complete intersection of hypersurfaces of degrees
  $d_1,\ldots, \,d_{N-n}$, so that ${\rm dim}\,Y=n+1$, $Y$ is smooth,  $F(Y)$ is smooth and $Y$ is covered by lines. We choose
  a general complete intersection $Z_Y$ of ample hypersurfaces  $Z_Y\subset F(Y)$
   with the following properties: one has  ${\rm dim}\,Z_Y=n$,
   the restricted  family of lines  gives a dominating (generically finite) morphism $q_Y: P_Y\rightarrow Y$, and, letting $X\subset Y$ be a
   general hyperplane section, $F(X)$ is smooth of the expected dimension, and $Z_Y\cap F(X)=:Z$ is a smooth complete intersection in $F(X)$ as above.
   As $X$ is chosen to be a general hyperplane section of  $Y$,
   $X':=q_Y^{-1}(X)\subset P_Y$ is by Bertini a smooth hypersurface $X'\subset P_Y$.
   Furthermore the image of $q_{Y,X*}: H_n(X',\mathbb{Z})\rightarrow H_n(X,\mathbb{Z})$ contains a vanishing cycle since when $X$ has a nodal degeneration at a generic point $y$ of $Y$, $X'$ also acquiers a nodal degeneration at all the preimages of $y$ in $P_Y$, assuming $q_Y$ is \'etale   over a neighborhood of $y$. (This argument appears in \cite[p 2.14]{bloch}.)
   Finally, we observe that,  via
   $p_Y:P_Y\rightarrow Z_Y$, $X'$  identifies naturally with the blow-up of $Z_Y$ along $Z$ so that
   $H_n(X',\mathbb{Z})=H_{n}(Z_Y,\mathbb{Z})\oplus H_{n-2}(Z,\mathbb{Z})$, and  that the image of the map
   $P_{Y*}: H_{n}(Z_Y,\mathbb{Z})\rightarrow H_{n}(X,\mathbb{Z})$ is contained in  the image of the restriction map $H_{n+2}(Y,\mathbb{Z})\rightarrow H_n(X,\mathbb{Z})$
   which is equal  to $\mathbb{Z}h^m$ by the Lefschetz theorem on hyperplane sections. The fact that the image
   of  $q_{Y,X*}: H_n(X',\mathbb{Z})\rightarrow H_n(X,\mathbb{Z})$ contains a vanishing cycle thus implies that the image of
   $P_{Z*}:H_{n-2}(Z,\mathbb{Z})\rightarrow H_n(X,\mathbb{Z})$ contains a vanishing cycle.
  Thus (i) is proved.

   \vspace{0.5cm}

   (ii) We modify the construction above as follows : first of all we replace
   $\mathcal{F}\rightarrow \Delta$ by a family $\mathcal{Z}\subset \mathcal{F}$ whose fiber  over $t\in \Delta^*$ is
   a $n-2$-dimensional complete intersection $\mathcal{Z}_t\subset F(\mathcal{X}_t)$ of ample hypersurfaces. Finally, we replace $\mathcal{Z}$ by
 the union   $\mathcal{Z}'$ of irreducible components of $\mathcal{Z}$ which dominate $\Delta$. Then the central fiber $\mathcal{Z}'_0$ has dimension
 $n-2$, and for the general fiber, we know by (i), by smoothness of  $F(\mathcal{X}_t)$,  and by the lefschetz theorem on hyperplane sections that
 the restriction ${P}'$ of ${P}$ to $\mathcal{Z}'$ has the property that  $P'_{t*}:H_{n-2}(\mathcal{Z}'_t,\mathbb{Z})\rightarrow H_{n}(\mathcal{X}_t,\mathbb{Z})$
 is surjective. We then conclude as in the proof of Claim \ref{claimtardif} that $P'_{0*}:H_{n-2}(\mathcal{Z}'_0,\mathbb{Z})\rightarrow H_{n}(\mathcal{X}_0,\mathbb{Z})$ is surjective, and as ${\rm dim}\,\mathcal{Z}'_0=n-2$ and the fibers of
 $P'_0\rightarrow \mathcal{Z}'_0$ are smooth, (ii) is proved.

  \vspace{0.5cm}

  (iii)
  The case where ${\rm dim}\,X=3$ is  a consequence of (ii) and  of Theorem \ref{theocylRC}. Indeed,
  (ii) says that
  $H_3(X,\mathbb{Z})=N_{1,{\rm cyl,st}}H_3(X,\mathbb{Z})$ and
  by Theorem \ref{theocylRC}, we thus have
  $H_3(X,\mathbb{Z})=\widetilde{N}^1_{\rm cyl}H_3(X,\mathbb{Z})$, hence a fortiori $H_3(X,\mathbb{Z})=\widetilde{N}^1H_3(X,\mathbb{Z})$.

   We now  conclude the proof when
  $F(X)$ has the right dimension and ${\rm Sing}\,F(X)$ is of codimension $\geq n-2$ in $F(X)$. As the Fano variety of lines $F(X)$ has the right dimension, we know already
 by the proof of (ii) that if $Z\subset X$ is a general complete intersection of ample  hypersurfaces which is of dimension $n-2$,
 the cylinder homomorphism $[P]_*:H_{n-2}(Z,\mathbb{Z})\rightarrow H_n(X,\mathbb{Z})$ is surjective. Furthermore, the assumption on ${\rm Sing}\,F(X)$
 implies that $Z$ has isolated singularities. We now apply  Proposition \ref{proisole} proved in   section \ref{secfinal}, which says that
  ${\rm Im}\,([P]_*:H_{n-2}(Z,\mathbb{Z})\rightarrow H_n(X,\mathbb{Z}))$ is  contained in $\widetilde{N}_{1,{\rm cyl}}H^n(X,\mathbb{Z})$. Thus
  $\widetilde{N}_{1,{\rm cyl}}H^n(X,\mathbb{Z})=H^n(X,\mathbb{Z})$ and a fortiori $\widetilde{N}^{1}H^n(X,\mathbb{Z})=H^n(X,\mathbb{Z})$ by Lemma \ref{lequisert}.
  \end{proof}

 \begin{rema} {\rm Theorem \ref{theoci} (i) is proved   in  \cite{shimada} with $\mathbb{Q}$-coefficients.}
 \end{rema}

\section{Proof of Theorem \ref{theomain} \label{secproof}}
\subsection{Abel-Jacobi map for $1$-cycles \label{secaj}}
Let $X$ be a smooth complex projective manifold of dimension $n$. For any smooth connected projective curve
$C$ and cycle $\mathcal{Z}\in{\rm CH}^{n-1}(C\times X)$, one has an Abel-Jacobi map
\begin{eqnarray}\label{eqmorphintjac}\Phi_\mathcal{Z}: J(C)\rightarrow J^{2n-3}(X),
\end{eqnarray}
$$z\mapsto \Phi_X(\mathcal{Z}_*(z)),$$
where $J^{2n-3}(X)=H^{2n-3}(X,\mathbb{C})/(F^{n-1}H^{2n-3}(X,\mathbb{C})\oplus H^{2n-3}(X,\mathbb{Z})_{\rm tf})$.
The morphism $\Phi_\mathcal{Z}$ is the morphism of complex tori
associated with the morphism of Hodge structures
\begin{eqnarray}\label{eqmorphcyl}  [\mathcal{Z}]_*:H^1(C,\mathbb{Z})\rightarrow H^{2n-3}(X,\mathbb{Z})_{\rm tf}.
\end{eqnarray}

By definition, the images of all morphisms
$[\mathcal{Z}]_*$ as above generate $\widetilde{N}_{1,\rm cyl}H^{2n-3}(X,\mathbb{Z})_{\rm tf}$, and
applying Proposition \ref{procylegalstrong}, we find that they generate as well
$\widetilde{N}^{n-2}H^{2n-3}(X,\mathbb{Z})_{\rm tf}\subset N^{n-2}H^{2n-3}(X,\mathbb{Z})_{\rm tf}$.

Consider first the case of  a general smooth projective threefold. As proved in \cite{CTV},
the group $H^3(X,\mathbb{Z})/N^1H^3(X,\mathbb{Z})$ has no torsion, as it injects into the unramified cohomology group
$H^0(X_{{\rm Zar}},\mathcal{H}^3(\mathbb{Z}))$, and the sheaf $\mathcal{H}^3(\mathbb{Z})$ has no torsion. It follows that
the group $H^3(X,\mathbb{Z})_{\rm tf}/N^1H^3(X,\mathbb{Z})_{\rm tf}$ has no torsion. The inclusion of lattices
$$N^1H^3(X,\mathbb{Z})_{\rm tf} \subset H^3(X,\mathbb{Z})_{\rm tf}$$
is a morphism of integral Hodge structures of weight $3$  which, thanks to the fact that $H^3X,\mathbb{Z})_{\rm tf}/N^1H^3(X,\mathbb{Z})_{\rm tf}$ has no torsion,
 induces an injection of the corresponding intermediate Jacobians
$$J(N^1H^3(X,\mathbb{Z})_{\rm tf})\hookrightarrow J(H^3(X,\mathbb{Z})_{\rm tf})=J^3(X).$$

In higher dimension, it is observed by Walker \cite{walker} that
the Abel-Jacobi map for $1$-cycles
$$\Phi_X:{\rm CH}_1(X)_{{\rm alg}}\rightarrow  J^{2n-3}(X)$$
factors through a surjective morphism
\begin{eqnarray}\label{eqwalkerliftavril} \widetilde{\Phi}_X: {\rm CH}_1(X)_{{\rm alg}}\rightarrow J(N^{n-2}H^{2n-3}(X,\mathbb{Z})_{\rm tf})
\end{eqnarray}
where
the intermediate Jacobian $J(N^{n-2}H^{2n-3}(X,\mathbb{Z})_{\rm tf})$  is not in general
a subtorus of $J(H^{2n-3}(X,\mathbb{Z})_{\rm tf})$. The point is that it is not  necessarily the case  for higher coniveau $n-2>1$  that
$$N^{n-2}H^{2n-3}(X,\mathbb{Z})_{\rm tf}\subset H^{2n-3}(X,\mathbb{Z})_{\rm tf}$$
is a saturated sublattice. We refer to  \cite{suzuki} for the discussion  of such  phenomena. There is a related stable birational
invariant, which is the torsion of the group $$H^{2n-3}(X,\mathbb{Z})_{\rm tf}/N^{n-2}H^{2n-3}(X,\mathbb{Z})_{\rm tf}.$$
Concerning the Walker lift (\ref{eqwalkerliftavril}), Suzuki proves
\begin{theo}\cite{suzuki} \label{theosuzuki} Let $X$ be a rationally connected manifold of dimension $n$. Then the Walker Abel-Jacobi map
$\widetilde{\Phi}_X: {\rm CH}_1(X)_{{\rm alg}}\rightarrow J(N^{n-2}H^{2n-3}(X,\mathbb{Z})_{\rm tf})$ is injective on torsion.
\end{theo}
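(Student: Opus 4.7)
The plan is to combine the Bloch--Srinivas decomposition of the diagonal for rationally connected varieties with the naturality of Walker's lift, reducing the injectivity-on-torsion problem to a controllable statement on a desingularized divisor. Since $X$ is rationally connected, ${\rm CH}_0(X)=\mathbb{Z}$ and Bloch--Srinivas produces a positive integer $N$, an effective divisor $D\subset X$ with desingularization $\tilde{\jmath}:\widetilde{D}\to X$, a point $x_0\in X$, and a correspondence $\widetilde{\Gamma}\in{\rm CH}^{n-1}(\widetilde{D}\times X)$ such that
$$N\,[\Delta_X]=(\tilde{\jmath}\times{\rm id}_X)_*\widetilde{\Gamma}+[X\times x_0]\quad\text{in }{\rm CH}^n(X\times X).$$
Since $[X\times x_0]$ acts as zero on $1$-cycles, for every $z\in{\rm CH}_1(X)_{\rm alg}$ one obtains $Nz=\tilde{\jmath}_*(w)$ with $w:=\widetilde{\Gamma}_*z\in{\rm CH}_1(\widetilde{D})_{\rm alg}$.

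The second step is to use that $\tilde{\jmath}_*$ sends $H^{2n-5}(\widetilde{D},\mathbb{Z})$ into $N^{n-2}H^{2n-3}(X,\mathbb{Z})$ by definition of the coniveau filtration, so that the naturality of Walker's construction yields a commutative square
$$\begin{matrix}{\rm CH}_1(\widetilde{D})_{\rm alg}&\stackrel{\tilde{\jmath}_*}{\longrightarrow}&{\rm CH}_1(X)_{\rm alg}\\ \Phi_{\widetilde{D}}\downarrow& &\downarrow\widetilde{\Phi}_X\\ J^{2n-5}(\widetilde{D})&\stackrel{\tilde{\jmath}_*}{\longrightarrow}&J(N^{n-2}H^{2n-3}(X,\mathbb{Z})_{\rm tf}).\end{matrix}$$
If $z$ is torsion with $\widetilde{\Phi}_X(z)=0$, this forces $\tilde{\jmath}_*(\Phi_{\widetilde{D}}(w))=0$ in the lifted Jacobian. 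I would then slice $\widetilde{D}$ by a sufficiently general smooth complete intersection surface $S\subset\widetilde{D}$ of very ample divisors, so that by the Lefschetz hyperplane theorem the $1$-cycle $w$ can be represented, after multiplying by a further integer, via a $0$-cycle class on $S$, and Roitman's theorem on $S$ controls its torsion; pushing forward back to $X$ through $\tilde{\jmath}_*$ one aims to kill $z$ itself in ${\rm CH}_1(X)$.

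The main obstacle is that $\widetilde{D}$ is in general neither rationally connected nor itself a surface, so one cannot simply iterate Bloch--Srinivas on it, and moreover the inclusion of lattices $N^{n-2}H^{2n-3}(X,\mathbb{Z})_{\rm tf}\subset H^{2n-3}(X,\mathbb{Z})_{\rm tf}$ is in general not saturated (the quotient carrying the stable birational torsion invariant mentioned just after the theorem statement). Walker's lift is designed precisely to detect this torsion defect, so the hard step is to track, at the integral level and not merely with $\mathbb{Q}$-coefficients, how a torsion $1$-cycle $z$ annihilated by $\widetilde{\Phi}_X$ descends to a torsion $0$-cycle on a surface where Roitman applies, while retaining enough information to force $z=0$ rather than merely $Nz=0$. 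I expect this to require a delicate combination of Lefschetz hyperplane sections on $\widetilde{D}$ with a second diagonal-type reduction that genuinely exploits the rational connectedness of $X$ beyond the single Bloch--Srinivas decomposition, and it is here that the argument becomes most technical.
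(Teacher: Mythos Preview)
The paper does not prove this theorem: it is quoted from Suzuki's paper \cite{suzuki} and used as a black box (see the acknowledgment at the end of the introduction). So there is no in-paper argument to compare yours against; what follows is an assessment of your sketch on its own terms.

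Your decomposition-of-the-diagonal framework is the natural one, but there is a dimension slip in the way you set up the factorization, and it matters. With $\widetilde{\Gamma}\in{\rm CH}^{n-1}(\widetilde{D}\times X)$ and the identity $N\Delta_X=(\tilde{\jmath}\times{\rm id})_*\widetilde{\Gamma}+X\times x_0$, the action on a class $z\in{\rm CH}_1(X)$ computes, via the projection formula, as
\[
Nz=\widetilde{\Gamma}_*\bigl(\tilde{\jmath}^{*}z\bigr),\qquad \tilde{\jmath}^{*}z\in{\rm CH}^{n-1}(\widetilde{D})={\rm CH}_0(\widetilde{D}),
\]
not $Nz=\tilde{\jmath}_*(w)$ with $w\in{\rm CH}_1(\widetilde{D})$. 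In other words the intermediate object is a $0$-cycle on $\widetilde{D}$, not a $1$-cycle. This changes the picture: Roitman's theorem applies \emph{directly} on $\widetilde{D}$, with no need to slice further to a surface. If $z$ is torsion with $\widetilde{\Phi}_X(z)=0$, then $\tilde{\jmath}^{*}z$ is torsion and (by compatibility of Abel--Jacobi with Gysin pull-back) ${\rm alb}_{\widetilde{D}}(\tilde{\jmath}^{*}z)=\tilde{\jmath}^{*}\Phi_X(z)=0$, hence $\tilde{\jmath}^{*}z=0$ in ${\rm CH}_0(\widetilde{D})$ by Roitman, and therefore $Nz=0$. Your commutative square with $\Phi_{\widetilde D}$ on ${\rm CH}_1(\widetilde D)$ is not the relevant one, and the ``circularity'' you worry about (needing the same statement on $\widetilde{D}$) disappears once the correct codimension is used.

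That said, the genuine obstacle you flag is real and your sketch does not overcome it: one has only proved $Nz=0$, i.e.\ every torsion element of $\ker\widetilde{\Phi}_X$ is $N$-torsion, with $N$ fixed by $X$. Passing from $Nz=0$ to $z=0$ is precisely the point where the argument must exploit the Walker lift rather than the ordinary Abel--Jacobi map (for the latter the statement is false in general, as the paper recalls). Your proposed remedies --- Lefschetz slicing on $\widetilde D$, a ``second diagonal-type reduction'' --- do not engage with this: slicing by a surface was already unnecessary, and iterating Bloch--Srinivas is blocked exactly because $\widetilde{D}$ is not rationally connected. Closing this gap needs an additional idea specific to the Walker construction (for instance an integral compatibility or a divisibility argument for $\ker\widetilde{\Phi}_X$), which is the content of Suzuki's paper and is not supplied by your outline.
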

Let us come back to  a general  surjective morphism $\phi: A\rightarrow B$ of complex tori $A,\,B$ that we represent as quotients

$$A=A_{0,1}/A_{\mathbb{Z}},\,\,\, B=B_{0,1}/B_{\mathbb{Z}},$$ of complex vector spaces by lattices,  with  induced morphisms
$$\phi_\mathbb{Z}=\phi_*:A_{\mathbb{Z}}\rightarrow B_{\mathbb{Z}},$$
$$ \phi_{0,1}=\phi_*:A_{0,1}\rightarrow B_{0,1}$$
respectively on integral homology  $H_1(\,,\mathbb{Z})$ and on $H_{0,1}$-groups. The subgroup
${\rm Ker}\,\phi$ is a finite union of  translates of
the subtorus
$$K:={\rm Ker}\,\phi_{0,1}/{\rm Ker}\,\phi_{\mathbb{Z}}.$$
More precisely,
\begin{lemm} \label{letrivsurtore} Let \begin{eqnarray}
\label{eqDphi} D_\phi:=\{\alpha\in A_{\mathbb{Q}},\,\phi_\mathbb{Q}(\alpha)\in B_\mathbb{Z}\}.
\end{eqnarray} Then (i)
the group $T_\phi=D_\phi/A_{\mathbb{Z}}$ is  isomorphic to the   torsion  subgroup of
${\rm Ker}\,\phi$ and
\begin{eqnarray} \label{eqpourtoretransl} {\rm Ker}\,\phi=K+ T_\phi.
\end{eqnarray}

(ii) The group $T_\phi/{\rm Ker}\,\phi_\mathbb{Q}$ is isomorphic to the group of connected components of ${\rm Ker}\,\phi$.
 \end{lemm}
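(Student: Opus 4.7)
The plan is to work throughout with the concrete presentations $A=A_{0,1}/A_{\mathbb{Z}}$ and $B=B_{0,1}/B_{\mathbb{Z}}$, and to exploit the elementary fact that, under these presentations, the torsion subgroup of $A$ is exactly the image of $A_{\mathbb{Q}}:=A_{\mathbb{Z}}\otimes_{\mathbb{Z}}\mathbb{Q}$, viewed as the $\mathbb{Q}$-subspace of $A_{0,1}$ spanned by $A_{\mathbb{Z}}$. The single nontrivial input needed at the start is that the surjectivity of $\phi$ (equivalently, of $\phi_{0,1}=\phi_{\mathbb{Z}}\otimes_{\mathbb{Z}}\mathbb{R}$) forces $\phi_{\mathbb{Q}}:A_{\mathbb{Q}}\to B_{\mathbb{Q}}$ to be surjective as well; this follows because ${\rm coker}(\phi_{\mathbb{Q}})$ is a $\mathbb{Q}$-vector space whose $\mathbb{R}$-extension $ {\rm coker}(\phi_{\mathbb{Z}}\otimes\mathbb{R})$ vanishes.

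For part (i), I would first identify ${\rm Tors}(\ker\phi)$. A class $\bar\alpha\in\ker\phi$ is torsion precisely when $\alpha$ can be chosen in $A_{\mathbb{Q}}$, and then $\phi(\bar\alpha)=0$ reads $\phi_{\mathbb{Q}}(\alpha)\in B_{\mathbb{Z}}$, which is the defining condition of $D_\phi$. Hence ${\rm Tors}(\ker\phi)=D_\phi/A_{\mathbb{Z}}=T_\phi$. For the decomposition $\ker\phi=K+T_\phi$, the crucial step is a lifting argument: given $\bar\alpha\in\ker\phi$ with $\phi_{0,1}(\alpha)\in B_{\mathbb{Z}}\subset B_{\mathbb{Q}}$, use the surjectivity of $\phi_{\mathbb{Q}}$ to pick $\gamma\in A_{\mathbb{Q}}$ with $\phi_{\mathbb{Q}}(\gamma)=\phi_{0,1}(\alpha)$; set $\beta:=\alpha-\gamma\in\ker\phi_{0,1}$. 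Then $\bar\alpha=\bar\beta+\bar\gamma$ with $\bar\beta\in K$ and $\bar\gamma\in T_\phi$, as required.

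Part (ii) will follow by identifying the group of connected components of $\ker\phi$ with $\ker\phi/K$, where $K$ is the identity component; the only verification needed here is that $K$ is connected of the correct real dimension, which is immediate from the fact that $\ker\phi_{\mathbb{Z}}=\ker\phi_{0,1}\cap A_{\mathbb{Z}}$ is a full-rank lattice in $\ker\phi_{0,1}$ (by flatness of $\mathbb{Z}\hookrightarrow\mathbb{R}$ applied to $\ker(\phi_{\mathbb{Z}}\otimes\mathbb{R})=\ker(\phi_{\mathbb{Z}})\otimes\mathbb{R}$). Combined with (i), this gives $\ker\phi/K=(K+T_\phi)/K\cong T_\phi/(T_\phi\cap K)$, and a short chase identifies $T_\phi\cap K$ with the image of $\ker\phi_{\mathbb{Q}}$ in $T_\phi$: if $\gamma\in D_\phi$ represents an element of $K$, then $\gamma=\beta+\delta$ with $\beta\in\ker\phi_{0,1}$, $\delta\in A_{\mathbb{Z}}$, whence $\beta=\gamma-\delta\in A_{\mathbb{Q}}\cap\ker\phi_{0,1}=\ker\phi_{\mathbb{Q}}$, and conversely. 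This yields $\pi_0(\ker\phi)\cong T_\phi/\ker\phi_{\mathbb{Q}}$. No serious obstacle arises; the only real subtlety is notational bookkeeping — making sure $\ker\phi_{\mathbb{Q}}$ is consistently regarded as sitting inside $T_\phi$ via the quotient $D_\phi\twoheadrightarrow T_\phi$, and correctly unpacking the ``sum'' of a subtorus with a subgroup of torsion elements inside $A$.
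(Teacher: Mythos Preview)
Your proof is correct and follows essentially the same approach as the paper's: for (i) you identify $T_\phi$ with ${\rm Tors}(\ker\phi)$ exactly as the paper does, and your lifting step (choosing $\gamma\in A_{\mathbb{Q}}$ with $\phi_{\mathbb{Q}}(\gamma)=\phi_{0,1}(\alpha)$ via surjectivity of $\phi_{\mathbb{Q}}$) is the algebraic rephrasing of the paper's geometric argument that the affine subspace $\phi_{\mathbb{R}}^{-1}(b_t)$ is defined over $\mathbb{Q}$ and hence has a rational point. For (ii) you use the second isomorphism theorem $(K+T_\phi)/K\cong T_\phi/(T_\phi\cap K)$ and identify $T_\phi\cap K$ with the image of $\ker\phi_{\mathbb{Q}}$, while the paper computes ${\rm Tors}(\ker\phi)/{\rm Tors}\,K$ and invokes divisibility of $K$; these are two formulations of the same computation.
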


 \begin{proof} (i) A torsion point of $A$ is an element of
 $A_\mathbb{Q}/A_\mathbb{Z}$ and it is  in ${\rm Ker}\,\phi$ when any of its  lifts $\alpha$   in $A_\mathbb{Q}$ maps to
 $B_\mathbb{Z}$ via $\phi_\mathbb{Q}$. This proves the first statement.
 For  the equality (\ref{eqpourtoretransl}), as  $K\subset {\rm Ker}\,\phi$ and
 $T_\phi\subset {\rm Ker}\,\phi$,  we just have to show  that ${\rm Ker}\,\phi\subset K+ T_\phi$. The result has nothing to do with complex tori,
 as we can work as well with the corresponding real tori $A_{\mathbb{R}}/A_{\mathbb{Z}}$, $B_{\mathbb{R}}/B_{\mathbb{Z}}$ which are naturally isomorphic as real tori to $A$ and $B$ respectively.
 Let $t\in {\rm Ker}\,\phi$, and let $t_\mathbb{R}$ be a lift of $t$ in $A_\mathbb{R}$. Then
 $\phi_{\mathbb{R}}(t)\in B_{\mathbb{Z}}$. Let $b_t=\phi_{\mathbb{R}}(t)\in B_{\mathbb{Z}}$ and let
 $$K_{\mathbb{R},t}=\{v\in A_\mathbb{R},\,\phi_{\mathbb{R}}(v)=b_t\}\subset A_\mathbb{R}.$$
 Then $K_{\mathbb{R},t}$ is  affine, modeled on the vector space ${\rm Ker}\,\phi_{\mathbb{R}}$,   contains $t_\mathbb{R}$, and is defined over $\mathbb{Q}$.
 Hence it has a rational point $t_\mathbb{Q}$ which belongs to $D_\phi$ and thus
  $$t_{\mathbb{R}}=t_\mathbb{Q}+t'$$ with
  $t'\in{\rm Ker}\,\phi_{\mathbb{R}}$, which proves that $t\in K+ T_\phi$ by projection modulo $A_\mathbb{Z}$  since $K={\rm Ker}\,\phi_\mathbb{R}/{\rm Ker}\,\phi_\mathbb{Z}$.

  (ii)  We have ${\rm Tors}\,K={\rm Ker}\,\phi_\mathbb{Q}/{\rm Ker}\,\phi_\mathbb{Z}$, so $T_\phi/{\rm Ker}\,\phi_\mathbb{Q}$ is isomorphic to
  ${\rm Tors}\,({\rm Ker}\,\phi)/ {\rm Tors}\,K$. Using the fact that ${\rm Ker}\,\phi$ is a group which is  a finite union of translates of
the divisible group   $K$, it is immediate to see that  ${\rm Tors}\,({\rm Ker}\,\phi)/ {\rm Tors}\,K$ is isomorphic to the group of connected components of
  ${\rm Ker}\,\phi$.
 \end{proof}
  \begin{rema}{\rm By (ii) the group $T_\phi$  is finite if $\phi$ is an isogeny, and in general it is finite modulo the torsion points of $A$ contained in the
 connected component $K$ of $0$ of ${\rm Ker}\,\phi$.  It follows that, in the formula (\ref{eqpourtoretransl}), we can replace $T_\phi$ by a finite subgroup of $T_\phi$.
 }
 \end{rema}
We will also  use the following property  of the group $T_\phi$.
\begin{lemm}\label{leautredeftphi} Let as above $\phi: A\rightarrow B$ be a surjective morphism of   tori.
Then, with  notation as  above,  the group $T_\phi$ maps surjectively, via
$$\phi_\mathbb{Q}=\phi_*:A_{\mathbb{Q}}\rightarrow B_{\mathbb{Q}},$$
to $B_\mathbb{Z}/\phi_{\mathbb{Z}}(A_\mathbb{Z})$. The kernel of the  map
$\overline{\phi}$ so defined is ${\rm Ker}\, \phi_\mathbb{Q}/{\rm Ker}\, \phi_\mathbb{Z}$ (that is, the torsion subgroup of $K$). The image of
$\overline{\phi}$ is isomorphic to ${\rm Tors}\,(B_\mathbb{Z}/{\rm Im}\,\phi_\mathbb{Z})$. In particular, ${\rm Im}\,\overline{\phi}$
is isomorphic to the group of connected components of ${\rm Ker}\,\phi$.
\end{lemm}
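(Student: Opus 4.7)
The plan is to construct $\overline{\phi}$ directly from the definition of $D_\phi$ and then verify the surjectivity, kernel, and image statements by short diagram chases, concluding by invoking Lemma \ref{letrivsurtore} (ii). First I would observe that by the very definition of $D_\phi$, the map $\phi_\mathbb{Q}$ sends $D_\phi$ into $B_\mathbb{Z}$, and sends $A_\mathbb{Z}\subset D_\phi$ into $\phi_\mathbb{Z}(A_\mathbb{Z})$; hence it descends to a homomorphism $\overline{\phi}\colon T_\phi\to B_\mathbb{Z}/\phi_\mathbb{Z}(A_\mathbb{Z})$. For surjectivity of $\overline{\phi}$, I would use that $\phi\colon A\to B$ surjective as a morphism of tori forces $\phi_{0,1}$, hence $\phi_\mathbb{R}$ and in turn $\phi_\mathbb{Q}$, to be surjective by a rank comparison between the real and rational realizations. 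Then any $b\in B_\mathbb{Z}$ equals $\phi_\mathbb{Q}(\alpha)$ for some $\alpha\in A_\mathbb{Q}$, and such $\alpha$ lies in $D_\phi$ by construction, so its class in $T_\phi$ hits the class of $b$.

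For the kernel, $\overline{\phi}(\alpha\bmod A_\mathbb{Z})=0$ is equivalent to $\phi_\mathbb{Q}(\alpha)=\phi_\mathbb{Z}(a)$ for some $a\in A_\mathbb{Z}$, i.e.\ to $\alpha-a\in{\rm Ker}\,\phi_\mathbb{Q}$. Hence ${\rm Ker}\,\overline{\phi}=({\rm Ker}\,\phi_\mathbb{Q}+A_\mathbb{Z})/A_\mathbb{Z}$, which by the second isomorphism theorem equals ${\rm Ker}\,\phi_\mathbb{Q}/({\rm Ker}\,\phi_\mathbb{Q}\cap A_\mathbb{Z})={\rm Ker}\,\phi_\mathbb{Q}/{\rm Ker}\,\phi_\mathbb{Z}$. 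For the image, the key observation is that the surjectivity of $\phi_\mathbb{Q}$ makes $\phi_\mathbb{Z}(A_\mathbb{Z})$ a sublattice of $B_\mathbb{Z}$ of full rank, so that $B_\mathbb{Z}/\phi_\mathbb{Z}(A_\mathbb{Z})$ is a finite abelian group and therefore coincides with its own torsion subgroup ${\rm Tors}(B_\mathbb{Z}/{\rm Im}\,\phi_\mathbb{Z})$. Combined with the surjectivity of $\overline{\phi}$, this gives the claimed description of the image.

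For the final assertion, I would apply Lemma \ref{letrivsurtore} (ii), which identifies the group of connected components of ${\rm Ker}\,\phi$ with $T_\phi/({\rm Ker}\,\phi_\mathbb{Q}/{\rm Ker}\,\phi_\mathbb{Z})$; the first isomorphism theorem together with the kernel computation then gives that this quotient is precisely ${\rm Im}\,\overline{\phi}$. The proof is essentially a formal chase, and no step presents a real obstacle; the only substantive ingredient is the passage from $\phi$ surjective as a morphism of tori to $\phi_\mathbb{Q}$ surjective on $\mathbb{Q}$-homology, which is what forces $B_\mathbb{Z}/\phi_\mathbb{Z}(A_\mathbb{Z})$ to be finite and hence to equal its torsion subgroup.
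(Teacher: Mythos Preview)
Your proof is correct and follows essentially the same route as the paper: both construct $\overline{\phi}$ by observing that $\phi_\mathbb{Q}$ restricts to a surjection $D_\phi\to B_\mathbb{Z}$ (using surjectivity of $\phi_\mathbb{Q}$), then identify the kernel as $({\rm Ker}\,\phi_\mathbb{Q}+A_\mathbb{Z})/A_\mathbb{Z}\cong{\rm Ker}\,\phi_\mathbb{Q}/{\rm Ker}\,\phi_\mathbb{Z}$, and finally invoke Lemma~\ref{letrivsurtore}(ii) for the connected-components statement. The only minor difference is in the image step: you argue directly that $\phi_\mathbb{Z}(A_\mathbb{Z})$ has full rank so the quotient is finite and hence equal to its torsion, whereas the paper argues that $T_\phi$ is torsion (so the image is torsion) and that torsion elements lift; both are valid and amount to the same thing under the standing surjectivity hypothesis.
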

\begin{proof} We have indeed by definition $T_\phi=D_\phi/A_\mathbb{Z}$, where
$D_\phi=\phi_\mathbb{Q}^{-1}(B_{\mathbb{Z}})$ by (\ref{eqDphi}).
Using the fact that $\phi_\mathbb{Q}:A_{\mathbb{Q}}\rightarrow B_{\mathbb{Q}}$ is surjective, we get that
$\phi_\mathbb{Q}: D_\phi\rightarrow B_\mathbb{Z}$ is surjective.
The kernel of the induced surjective  map
$$\overline{\phi_\mathbb{Q}}: D_\phi\rightarrow B_\mathbb{Z}/\phi_{\mathbb{Z}}(A_\mathbb{Z})$$
is clearly ${\rm Ker}\,\phi_\mathbb{Q}+A_\mathbb{Z}$, hence $\overline{\phi_\mathbb{Q}}$ factors  through $T_\phi$, and the induced map
 $\overline{\phi}:T_\phi\rightarrow B_\mathbb{Z}/\phi_{\mathbb{Z}}(A_\mathbb{Z})$ has for kernel the image of
 ${\rm Ker}\,\phi_\mathbb{Q}$ in $T_\phi$. For the last point, as $T_\phi$ is of torsion, ${\rm Im}\,\overline{\phi}$ is of torsion, and conversely,
 a torsion element of  $B_\mathbb{Z}/\phi_{\mathbb{Z}}(A_\mathbb{Z})$ lifts to an element of $A_\mathbb{Q}$.
\end{proof}

Coming back to the morphisms induced by the Abel-Jacobi map, the inclusion of the  finite index sublattice
$$\widetilde{N}_{1,\rm cyl}H^{2n-3}(X,\mathbb{Z})_{\rm tf}\rightarrow N^{n-2}H^{2n-3}(X,\mathbb{Z})_{\rm tf}$$
induces an isogeny of intermediate Jacobians
\begin{eqnarray}
\label{eqpourmapjac} J(\widetilde{N}_{1,\rm cyl}H^{2n-3}(X,\mathbb{Z})_{\rm tf})\rightarrow J(N^{n-2}H^{2n-3}(X,\mathbb{Z})_{\rm tf}).
\end{eqnarray}
By definition of $\widetilde{N}_{1,{\rm cyl}}$, for any smooth projective curve $C$ and codimension-$n-1$ cycle
$\mathcal{Z}\in {\rm CH}^{n-1}(C\times X)$,    the
morphism $[\mathcal{Z}]^*:H^1(C,\mathbb{Z})\rightarrow H^{2n-3}(X,\mathbb{Z})_{\rm tf}$ takes value in
$$\widetilde{N}_{1,{\rm cyl}}H^{2n-3}(X,\mathbb{Z})_{\rm tf}=\widetilde{N}^{n-2}H^{2n-3}(X,\mathbb{Z})_{\rm tf}\subset  N^{n-2}H^{2n-3}(X,\mathbb{Z})_{\rm tf}.$$
It follows that  the morphism $\Phi_\mathcal{Z}$ of (\ref{eqmorphintjac}), or rather its Walker lift   $\widetilde{\Phi}_\mathcal{Z}$, factors through
a morphism
\begin{eqnarray}
\label{eqliftAJ}\widetilde{\widetilde{\Phi}}_\mathcal{Z}: J(C)\rightarrow J(\widetilde{N}^{n-2}H^{2n-3}(X,\mathbb{Z})_{\rm tf}).
\end{eqnarray}
Let us clarify one point. One could naively  believe that these liftings provide a further lift  of the Walker  Abel-Jacobi map
\begin{eqnarray}\label{eqabeljacgen} \widetilde{\Phi}_X: {\rm CH}^{n-1}(X)_{\rm alg} \rightarrow J(N^{n-2}H^{2n-3}(X,\mathbb{Z})_{\rm tf})
\end{eqnarray}
defined on cycles algebraically equivalent to $0$, to  a morphism
 \begin{eqnarray}\label{eqabeljacgenliftlift}\widetilde{\widetilde{\Phi}}_X: {\rm CH}^{n-1}(X)_{\rm alg} \rightarrow J(\widetilde{N}_{1,\rm cyl}H^{2n-3}(X,\mathbb{Z})_{\rm tf})=J(\widetilde{N}^{n-2}H^{2n-3}(X,\mathbb{Z})_{\rm tf}).
 \end{eqnarray}
  For $n=3$, the existence of such a lifting would imply the equality $\widetilde{N}^1H^3(X,\mathbb{Z})_{\rm tf}=N^1H^3(X,\mathbb{Z})_{\rm tf}$ which  is the content of Theorem
\ref{theomain}  and    that we prove only for rationally connected threefolds.
Indeed, by \cite{murre}, the Abel-Jacobi map (\ref{eqabeljacgen}) is the universal regular homomorphism for codimension
$2$ cycles, so such a factorization is possible only if the natural map (\ref{eqpourmapjac}) between the  two intermediate Jacobians is an isomorphism.
The reason why the various liftings (\ref{eqliftAJ}) do not allow to construct  a lift  of (\ref{eqabeljacgen})
to a morphism  (\ref{eqabeljacgenliftlift}) is the fact that a $1$-cycle $Z\in {\rm CH}_1(X)_{\rm alg}$
does not come canonically from a family of $1$-cycles parameterized by a smooth curve $C$ as above. Two different such representations could lead to two different lifts of
$\widetilde{\Phi}_X(Z)$ in $J(\widetilde{N}_{1,{\rm cyl}}H^{2n-3}(X,\mathbb{Z})_{\rm tf})$.
A first lift allows to write $Z=\partial \Gamma_1$ for some $3$-chain supported on a smooth projective surface
$S_1$  mapping to  $X$, and a second lift  will allow to write $Z=\partial \Gamma_2$ for some $3$-chain supported on a smooth projective surface
$S_2$  mapping to  $X$. Then $\Gamma_1-\Gamma_2$ has no boundary, hence provides a priori  a homology class $\gamma$
in  $H_3(X,\mathbb{Z})\cong H^{2n-3}(X,\mathbb{Z})$ which is in $N^{n-2}H^{2n-3}(X,\mathbb{Z})$ but  is not supported on a smooth surface and  has no reason to be  in
$\widetilde{N}^{n-2}H^{2n-3}(X,\mathbb{Z})$. Due to the ambiguity of the choice, the Abel-Jacobi image of $Z$ will be well-defined  only modulo these cycles $\gamma$. Note that this argument also  explains the existence of the Walker lift.

Coming back to the case where  $X$ is a rationally connected $3$-fold,  Theorem \ref{theomain} is equivalent to the fact
that
$$\widetilde{N}^1_{\rm cyl}H^3(X,\mathbb{Z})_{\rm tf}=H^3(X,\mathbb{Z})_{\rm tf}.$$
Equivalently, for some smooth projective curve $C$, and cycle $\mathcal{Z}$ as above, the morphism
(\ref{eqmorphcyl}) is surjective.
If we consider the corresponding morphism (\ref{eqmorphintjac}) of intermediate Jacobians, its surjectivity holds
once the  morphism
(\ref{eqmorphcyl}) becomes  surjective after passing to  $\mathbb{Q}$-coefficients, and the
surjectivity of (\ref{eqmorphcyl}) is equivalent to the fact that
${\rm Ker}\,\Phi_{\mathbb{Z}}$ is connected.

\subsection{Cylinder homomorphism filtration on  degree $3$ homology \label{seccyltheo}}
Recall the  definition of the cylinder homomorphism and, for niveau $1$, stable cylinder homomorphism filtrations (Definitions \ref{deficylhomo} and \ref{deficylstable}).
 The proof of Theorem \ref{theomain} has two independent steps. The first one is   the following statement that works without any rational connectedness assumption. Here we recall that, in higher dimension,  the Abel-Jacobi map for $1$-cycles has the Walker factorization
through $$\widetilde{\Phi}_X:{\rm CH}_1(X)_{\rm alg}\rightarrow J(N^{n-2}H^{2n-3}(X,\mathbb{Z})_{\rm tf}).$$

\begin{theo} \label{theocylinder} Let $X$ be a   complex projective manifold of dimension $n$. Then, if the
Walker   Abel-Jacobi map
$\widetilde{\Phi}_X:{\rm CH}_1(X)_{\rm alg}\rightarrow J(N^{n-2}H^{2n-3}(X,\mathbb{Z})_{\rm tf})$
is injective on torsion, one has
\begin{eqnarray}
\label{eqN1N1} N_{1,{\rm cyl,st}}H^{2n-3}(X,\mathbb{Z})_{\rm tf}=N^{n-2}H^{2n-3}(X,\mathbb{Z})_{\rm tf}.
\end{eqnarray}
\end{theo}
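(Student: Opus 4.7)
The plan is to prove the equality (\ref{eqN1N1}) by establishing the two inclusions separately; only the reverse inclusion needs the Walker injectivity hypothesis. The inclusion $N_{1,{\rm cyl,st}}H^{2n-3}(X,\mathbb{Z})_{\rm tf}\subset N^{n-2}H^{2n-3}(X,\mathbb{Z})_{\rm tf}$ is immediate from Definition \ref{deficylstable}: given a projective flat semi-stable morphism $p:Y\rightarrow Z$ of relative dimension $1$ with $\dim Z\leq 1$ and a morphism $f:Y\rightarrow X$, the image of $f_*\circ p^*:H_1(Z,\mathbb{Z})\rightarrow H_3(X,\mathbb{Z})$ is supported on $f(Y)\subset X$, a subset of dimension at most $2$ and hence codimension at least $n-2$.

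For the reverse inclusion I would take $\alpha\in N^{n-2}H^{2n-3}(X,\mathbb{Z})_{\rm tf}$ and write $\alpha=j_*\beta$ with $\beta\in H_3(Y,\mathbb{Z})$ for some closed $Y\subset X$ of dimension $\leq 2$. Choose a resolution $\tau:\widetilde Y\rightarrow Y$ and set $\tilde j=j\circ\tau$. By Deligne's theorem recalled in the introduction, $\mathrm{Im}\,\tilde j_{*,\mathbb{Q}}=\mathrm{Im}\,j_{*,\mathbb{Q}}$, so for some $N>0$ we have $N\alpha=\tilde j_*\tilde\beta$ modulo torsion with $\tilde\beta\in H_3(\widetilde Y,\mathbb{Z})$. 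To manufacture $\alpha$ from a semi-stable family, I would imitate the proof of Corollary \ref{corintro}: start from a Poincar\'e divisor $\mathcal D\in\mathrm{CH}^1(\mathrm{Pic}^0(\widetilde Y)\times\widetilde Y)$ and restrict to a smooth curve $C\subset\mathrm{Pic}^0(\widetilde Y)$ cut out by very ample hypersurfaces, obtaining a flat projective family $\mathcal C\rightarrow C$ of divisors on $\widetilde Y$ whose cylinder map $H_1(C,\mathbb{Z})\rightarrow H_1(\widetilde Y,\mathbb{Z})=H_3(\widetilde Y,\mathbb{Z})$ is surjective by the Poincar\'e isomorphism and Lefschetz. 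The family $\mathcal C\rightarrow C$ need not be semi-stable, but semi-stable reduction yields a ramified cover $C'\rightarrow C$ with $C'$ a smooth projective curve and a semi-stable family $p:\mathcal C'\rightarrow C'$ together with a morphism $f:\mathcal C'\rightarrow\widetilde Y\xrightarrow{\tilde j}X$. The resulting cylinder homomorphism $f_*\circ p^*:H_1(C',\mathbb{Z})\rightarrow H_3(X,\mathbb{Z})$ lands in $N_{1,\mathrm{cyl,st}}H^{2n-3}(X,\mathbb{Z})$, and modulo torsion its image is a sublattice of $N^{n-2}H^{2n-3}(X,\mathbb{Z})_{\rm tf}$ of finite index (the index being controlled by $\deg(C'/C)$ and by the Deligne factor $N$).

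The main obstacle is the saturation step: upgrading this finite-index containment to equality, which is precisely where the injectivity of the Walker Abel-Jacobi map on torsion is needed. My plan is to describe the finite quotient $Q$ of $N^{n-2}H^{2n-3}(X,\mathbb{Z})_{\rm tf}$ by the semi-stable cylinder image via the isogeny of intermediate Jacobians it induces: by Lemmas \ref{letrivsurtore} and \ref{leautredeftphi}, $Q$ is isomorphic to the group of connected components of the kernel of this isogeny and embeds into the torsion subgroup of $J(N^{n-2}H^{2n-3}(X,\mathbb{Z})_{\rm tf})$. By Walker's surjectivity, every such torsion class is the Walker Abel-Jacobi image of some $1$-cycle $Z\in\mathrm{CH}_1(X)_{\rm alg}$; the construction forces $Z$ to be torsion, because the non-torsion part of $\widetilde\Phi_X(Z)$ is already realized by the Abel-Jacobi map of our semi-stable family. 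The hypothesis then gives $Z=0$ in $\mathrm{CH}_1(X)_{\rm alg}$, and consequently the class in $Q$ is zero, proving $Q=0$ and the equality (\ref{eqN1N1}). The delicate technical point, which I expect to be the crux, is to identify each cokernel class with the Walker Abel-Jacobi image of an appropriate algebraically trivial $1$-cycle attached to the family $\mathcal C'\rightarrow C'$, so that the Walker injectivity can be applied cleanly; compatibility between the cylinder map in cohomology and the Abel-Jacobi map at the level of cycles should make this natural.
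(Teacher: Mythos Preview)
Your proposal has a genuine gap in the saturation step, and the gap is not merely technical.

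First, the logical structure of your final paragraph is inverted. Walker injectivity on torsion says: if $Z\in{\rm CH}_1(X)_{\rm alg}$ is torsion and $\widetilde{\Phi}_X(Z)=0$, then $Z=0$. You are trying to apply it to a torsion cycle $Z$ whose Walker image is a \emph{nonzero} torsion point of the Jacobian (the point representing your class in $Q$), and concluding $Z=0$. That is not what the hypothesis gives you. In the paper the hypothesis is applied correctly: one first arranges torsion $1$-cycles $\mathcal{Z}_{x_i}-\mathcal{Z}_{x_0}$ whose \emph{lifted} Abel--Jacobi image lies in the kernel $T_\alpha$ of the isogeny $\alpha:J(\widetilde{N}^{n-2})\to J(N^{n-2})$, so that their \emph{Walker} image is genuinely zero; only then does injectivity force these cycles to be rationally equivalent to zero.

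Second, even granting some torsion cycle $Z=0$ in ${\rm CH}_1(X)$, your implication ``consequently the class in $Q$ is zero'' is a non sequitur. The group $Q=N^{n-2}/(\text{semi-stable cylinder image})$ is a quotient of integral cohomology lattices; there is no map from ${\rm CH}_1(X)_{\rm tors}$ to $Q$ through which vanishing of $Z$ would propagate. What the paper does with the conclusion $Z=0$ is entirely constructive: rational equivalence to zero furnishes surfaces $\Sigma_i$ fibered over $\mathbb{P}^1$ with $f_{i*}(p_i^{-1}(0)-p_i^{-1}(\infty))=\mathcal{Z}_{x_i}-\mathcal{Z}_{x_0}$. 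One then \emph{glues} these $\mathbb{P}^1$'s to the original smooth curve $C$ at the points $x_0,x_i$, producing a \emph{singular} nodal base curve $C'$ carrying a semi-stable family. The new loops in $H_1(C',\mathbb{Z})$ created by the gluing are shown (Lemma \ref{lepour3cycle}) to map under the cylinder homomorphism to representatives of the missing classes $\eta_i\in N^{n-2}/\widetilde{N}^{n-2}$. This is the mechanism that enlarges the cylinder image; nothing in your outline produces it.

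This also explains why your second paragraph cannot be the right starting point. Your family lives over a \emph{smooth} curve $C'$, so its cylinder image lies in $\widetilde{N}_{1,{\rm cyl}}=\widetilde{N}^{n-2}$ by Proposition \ref{procylegalstrong}; you can never reach classes in $N^{n-2}\setminus\widetilde{N}^{n-2}$ this way. The whole content of $N_{1,{\rm cyl,st}}$ beyond $\widetilde{N}_{1,{\rm cyl}}$ comes from allowing singular parameter curves, and the proof hinges on building such curves out of the rational-equivalence data.
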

In dimension $3$,
$N^{1}H^{3}(X,\mathbb{Z})_{\rm tf}\subset H^{3}(X,\mathbb{Z})_{\rm tf}$ has torsion free cokernel so
$J(N^{1}H^{3}(X,\mathbb{Z})_{\rm tf})\rightarrow J(H^3(X,\mathbb{Z})_{\rm tf})$ is injective and $\Phi_X=\widetilde{\Phi}_X$.
Furthermore, we can apply the following theorem
 due to Bloch (see \cite{bloch-srinivas}, \cite{murre}).
\begin{theo}\label{theobloch}  Let $X$ be a smooth projective variety over
$\mathbb{C}$. The Abel-Jacobi map
$\Phi_X:{\rm CH}^2(X)_{\rm alg}\rightarrow J^3(X)$
is injective on torsion cycles.
\end{theo}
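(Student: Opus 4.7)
The plan is to follow the strategy originating with Bloch, refined by Merkurjev--Suslin and expounded by Murre. It suffices to show, for each integer $n \geq 1$, that $\Phi_X$ is injective on the $n$-torsion subgroup $\mathrm{CH}^2(X)_{\mathrm{alg}}[n]$. The key construction is a refined \'etale cycle class map on $n$-torsion: combining Bloch's formula $\mathrm{CH}^2(X) \cong H^2_{\mathrm{Zar}}(X, \mathcal{K}_2)$ with the Kummer sequence of Zariski sheaves $0 \to \mathcal{K}_2 \xrightarrow{n} \mathcal{K}_2 \to \mathcal{K}_2/n \to 0$ produces a connecting homomorphism $\mathrm{CH}^2(X)[n] \to H^1_{\mathrm{Zar}}(X, \mathcal{K}_2/n)$, while the Merkurjev--Suslin theorem identifies $\mathcal{K}_2/n$ with the Zariski sheaf $\mathcal{H}^2_{et}(\mu_n^{\otimes 2})$. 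The Bloch--Ogus coniveau spectral sequence then provides an edge embedding $H^1_{\mathrm{Zar}}(X, \mathcal{H}^2_{et}(\mu_n^{\otimes 2})) \hookrightarrow H^3_{et}(X, \mu_n^{\otimes 2})$. Composing and invoking Artin's comparison theorem yields an injective refined cycle class
\[
\lambda_n : \mathrm{CH}^2(X)[n] \hookrightarrow H^3_{et}(X, \mu_n^{\otimes 2}) \cong H^3(X, \mathbb{Z}/n).
\]

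Next I would land the image of $\mathrm{CH}^2(X)_{\mathrm{alg}}[n]$ inside $H^3(X, \mathbb{Z})/n$ via the universal coefficient sequence
\[
0 \to H^3(X, \mathbb{Z})/n \to H^3(X, \mathbb{Z}/n) \to H^4(X, \mathbb{Z})[n] \to 0,
\]
using that the ordinary cycle class of $z \in \mathrm{CH}^2(X)_{\mathrm{alg}}$ in $H^4(X, \mathbb{Z})$ vanishes. The essential compatibility to be verified is that the composition $\mathrm{CH}^2(X)_{\mathrm{alg}}[n] \xrightarrow{\lambda_n} H^3(X, \mathbb{Z})/n \twoheadrightarrow J^3(X)[n] = H^3(X, \mathbb{Z})_{\mathrm{tf}}/n$ agrees with the restriction of $\Phi_X$ to $n$-torsion. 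This may be checked in the universal geometric setting $z = \Gamma_*(c - c_0)$ with $C$ a smooth projective curve and $\Gamma \in \mathrm{CH}^2(C \times X)$, where both sides factor through the induced morphism of intermediate Jacobians $J(C)[n] \to J^3(X)[n]$, and the explicit computation of $\lambda_n$ via the norm residue symbol on $H^1(C, \mathbb{Z})/n$ matches the classical Abel--Jacobi construction.

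Finally, to upgrade injectivity of $\lambda_n$ into injectivity of $\Phi_X$ on $n$-torsion, one must deal with the kernel of the projection $H^3(X, \mathbb{Z})/n \twoheadrightarrow H^3(X, \mathbb{Z})_{\mathrm{tf}}/n$, which comes from the integral torsion of $H^3(X, \mathbb{Z})$. A limit argument prime-by-prime -- replacing $n$ by $n\ell^N$ for each prime $\ell$ dividing the exponent of $H^3(X, \mathbb{Z})_{\mathrm{tors}}$ and $N$ large enough -- absorbs this ambiguity, since any $n$-torsion cycle killed by $\Phi_X$ then has vanishing enlarged refined class $\lambda_{n\ell^N}(z)$ and so is itself zero. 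The hardest step, carrying essentially all the depth of the theorem, is the construction and injectivity of $\lambda_n$: it rests on the Merkurjev--Suslin norm residue isomorphism $K_2(F)/n \xrightarrow{\sim} H^2_{et}(F, \mu_n^{\otimes 2})$ at function fields together with the Gersten resolution which glues these local data into global cohomological statements. The remaining steps are comparatively formal: comparison theorems, universal-coefficient bookkeeping, and the final $\ell$-adic limit.
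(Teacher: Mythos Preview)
The paper does not give its own proof of this theorem: it is quoted as a result due to Bloch, with references to \cite{bloch-srinivas} and \cite{murre}, and then used as an input. Your sketch is precisely the Bloch--Merkurjev--Suslin--Murre argument contained in those references, so in that sense you are reproducing the intended proof rather than offering an alternative.

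Two points where your write-up should be tightened. First, the boundary map from the sequence $0\to\mathcal{K}_2\xrightarrow{n}\mathcal{K}_2\to\mathcal{K}_2/n\to 0$ goes the other way: one gets a \emph{surjection}
\[
\delta:\,H^1_{\rm Zar}(X,\mathcal{K}_2/n)\ \twoheadrightarrow\ \mathrm{CH}^2(X)[n],
\]
with kernel the image of $H^1_{\rm Zar}(X,\mathcal{K}_2)$. The Bloch map $\lambda$ is then obtained after identifying $H^1_{\rm Zar}(X,\mathcal{K}_2/n)\cong H^1_{\rm Zar}(X,\mathcal{H}^2(\mu_n^{\otimes 2}))\cong N^1H^3_{\rm \acute et}(X,\mu_n^{\otimes 2})$ via Merkurjev--Suslin and Bloch--Ogus, and controlling this kernel; that control, not a formal connecting map, is where the content lies. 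Second, your prime-by-prime step at the end is a bit loose: the torsion element $t\in H^3(X,\mathbb{Z})_{\rm tors}$ ambiguating $\lambda_n(z)$ need not be $\ell$-primary for a single $\ell$. It is cleaner to pass at once to $\mathbb{Q}/\mathbb{Z}$-coefficients, where
\[
H^3(X,\mathbb{Q}/\mathbb{Z})\ \cong\ \bigl(H^3(X,\mathbb{Z})_{\rm tf}\otimes\mathbb{Q}/\mathbb{Z}\bigr)\ \oplus\ H^4(X,\mathbb{Z})_{\rm tors},
\]
the first summand is $J^3(X)_{\rm tors}$, and the obstruction from $H^3(X,\mathbb{Z})_{\rm tors}$ vanishes because torsion tensored with $\mathbb{Q}/\mathbb{Z}$ is zero. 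Equivalently, replace $n$ by $n\cdot e$ with $e$ the exponent of $H^3(X,\mathbb{Z})_{\rm tors}$. With these adjustments your outline is the standard proof.
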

Theorem \ref{theocylinder} thus gives in this case
\begin{coro} \label{corocylinder} (Cf. Theorem \ref{theocylintro}) Let $X$ be a   complex projective threefold. Then
\begin{eqnarray}
\label{eqN1N1bis} N_{1,{\rm cyl,st}}H^{3}(X,\mathbb{Z})_{\rm tf}=N^1H^{3}(X,\mathbb{Z})_{\rm tf}.
\end{eqnarray}
\end{coro}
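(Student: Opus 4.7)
The plan is to deduce this corollary as a direct specialization of Theorem \ref{theocylinder} to the case $n=3$, once the hypothesis there (injectivity of the Walker Abel-Jacobi map on torsion) is checked. So I would split the argument into two small verifications: first, that in dimension $3$ the Walker lift $\widetilde{\Phi}_X$ really is the ordinary Abel-Jacobi map $\Phi_X$ (so ``injective on torsion'' is a clean statement), and second, that this ordinary Abel-Jacobi map is indeed injective on torsion.

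For the first point, I would recall the observation already made in Section \ref{secaj}: by \cite{CTV}, the group $H^3(X,\mathbb{Z})/N^1 H^3(X,\mathbb{Z})$ injects into $H^0(X_{\rm Zar},\mathcal{H}^3(\mathbb{Z}))$, and the latter sheaf is torsion-free, so $H^3(X,\mathbb{Z})_{\rm tf}/N^1 H^3(X,\mathbb{Z})_{\rm tf}$ is torsion-free. Consequently the inclusion of integral Hodge structures $N^1H^3(X,\mathbb{Z})_{\rm tf}\hookrightarrow H^3(X,\mathbb{Z})_{\rm tf}$ is saturated, and the induced morphism of intermediate Jacobians $J(N^1H^3(X,\mathbb{Z})_{\rm tf})\hookrightarrow J^3(X)$ is injective (not merely an isogeny). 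Via this inclusion, $\widetilde{\Phi}_X$ factors $\Phi_X$ with the same image up to identification, and torsion injectivity of one is equivalent to torsion injectivity of the other.

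For the second point, I would invoke Theorem \ref{theobloch} (Bloch's theorem): on a smooth projective complex variety the Abel-Jacobi map $\Phi_X:{\rm CH}^2(X)_{\rm alg}\rightarrow J^3(X)$ is injective on torsion cycles. On a threefold, ${\rm CH}_1(X)_{\rm alg}={\rm CH}^2(X)_{\rm alg}$, so this is exactly the hypothesis of Theorem \ref{theocylinder} in the case $n=3$. Applying that theorem then yields the desired equality
\[
N_{1,{\rm cyl,st}}H^{3}(X,\mathbb{Z})_{\rm tf}=N^1H^{3}(X,\mathbb{Z})_{\rm tf}.
\]

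There is essentially no independent obstacle to overcome here, since the corollary is a packaging of two nontrivial inputs (Theorem \ref{theocylinder} and Theorem \ref{theobloch}) together with the torsion-freeness of $H^3/N^1 H^3$ from \cite{CTV}. The only delicate point to state carefully is the identification of $\widetilde{\Phi}_X$ with $\Phi_X$ in dimension $3$, which relies precisely on this torsion-freeness to ensure that the map between intermediate Jacobians is injective rather than just an isogeny.
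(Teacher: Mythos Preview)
Your proposal is correct and follows essentially the same approach as the paper: identify $\widetilde{\Phi}_X$ with $\Phi_X$ in dimension $3$ via the torsion-freeness of $H^3(X,\mathbb{Z})_{\rm tf}/N^1H^3(X,\mathbb{Z})_{\rm tf}$ from \cite{CTV}, then invoke Bloch's theorem (Theorem \ref{theobloch}) to verify the hypothesis of Theorem \ref{theocylinder}. There is nothing to add.
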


For rationally connected manifolds of any dimension, we can apply Suzuki's theorem \ref{theosuzuki}.  Theorem \ref{theocylinder} thus gives in this case
 \begin{coro} \label{corocylinderRC} Let $X$ be a   rationally connected complex projective manifold of dimension $n$. Then
\begin{eqnarray}
\label{eqN1N1RC} N_{1,{\rm cyl,st}}H^{2n-3}(X,\mathbb{Z})_{\rm tf}=N^{n-2}H^{2n-3}(X,\mathbb{Z})_{\rm tf}.
\end{eqnarray}
\end{coro}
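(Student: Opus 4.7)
The plan is essentially to combine two earlier results as a direct application. Theorem \ref{theocylinder} asserts that the equality
$$N_{1,{\rm cyl,st}}H^{2n-3}(X,\mathbb{Z})_{\rm tf}=N^{n-2}H^{2n-3}(X,\mathbb{Z})_{\rm tf}$$
holds for \emph{any} smooth complex projective manifold $X$ of dimension $n$, provided the Walker Abel-Jacobi map
$$\widetilde{\Phi}_X:{\rm CH}_1(X)_{\rm alg}\rightarrow J(N^{n-2}H^{2n-3}(X,\mathbb{Z})_{\rm tf})$$
is injective on torsion. So the only thing to verify in the rationally connected setting is this torsion-injectivity hypothesis.

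First I would invoke Suzuki's Theorem \ref{theosuzuki}, which is exactly the statement that, when $X$ is rationally connected, the Walker lift $\widetilde{\Phi}_X$ is injective on torsion. This is the key external input; without it one would be left with the open question raised in the introduction about whether the standard Abel-Jacobi map $\Phi_X:{\rm CH}_1(X)_{\rm alg}\to J^{2n-3}(X)$ kills no nontrivial torsion, and for $n\geq 4$ this remains delicate because the sublattice $N^{n-2}H^{2n-3}(X,\mathbb{Z})_{\rm tf}$ need not be saturated in $H^{2n-3}(X,\mathbb{Z})_{\rm tf}$, so one really needs the Walker factorization rather than the naive $\Phi_X$.

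Once Theorem \ref{theosuzuki} supplies the hypothesis, I simply feed it into Theorem \ref{theocylinder} to get the desired equality. There is no additional geometric input needed: the case $n=3$ is parallel (handled as Corollary \ref{corocylinder} using Bloch's Theorem \ref{theobloch} in place of Suzuki's theorem, and using that the sublattice $N^1H^3(X,\mathbb{Z})_{\rm tf}$ is automatically saturated for threefolds so $\widetilde{\Phi}_X=\Phi_X$).

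Since this is a one-line corollary given the two ingredients, there is no real obstacle at this stage. The \emph{actual} difficulty has been pushed into the proofs of Theorem \ref{theocylinder} (which compares the stable-cylinder niveau with the coniveau, and uses the universal-regular-homomorphism property of the Walker Abel-Jacobi map together with semistable deformation theory for families of curves) and of Theorem \ref{theosuzuki} (which exploits the rational connectedness via a decomposition-of-the-diagonal style argument). So in the proof proposal one only needs to write down the two-step syllogism and point to those results.
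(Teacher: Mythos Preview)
Your proposal is correct and matches the paper's own approach exactly: the paper simply observes that Suzuki's Theorem \ref{theosuzuki} supplies the torsion-injectivity hypothesis for the Walker Abel-Jacobi map on rationally connected manifolds, and then Theorem \ref{theocylinder} gives the conclusion directly. There is no additional argument in the paper beyond this two-step syllogism.
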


We do not know if these  statements hold true  for the whole group $H^{2n-3}(X,\mathbb{Z})$ (instead of its torsion free part).
By definition, they say that if the Abel-Jacobi map for $1$-cycles  is injective on torsion, the torsion free part
of coniveau-$n-2$,  degree-$2n-3$ cohomology   of $X$ is generated by cylinder homomorphisms
$$f_*\circ p^*: H_1(C,\mathbb{Z})\rightarrow H_{3}(X,\mathbb{Z})_{\rm tf}$$
for all diagrams
 \begin{eqnarray}
 \label{diagdiag}\begin{matrix}
 & Y&\stackrel{f}{\rightarrow}& X
 \\
&p\downarrow& &
\\
&C& &,
\end{matrix}
\end{eqnarray}
where $p$ is flat semi-stable  projective of relative dimension $1$, and $C$ is {\it any} reduced  curve (possibly singular,  and not necessarily projective).

\begin{proof}[Proof of Theorem \ref{theocylinder}] We first choose a smooth connected projective curve
$C$ and a cycle $\mathcal{Z}\in{\rm CH}^{n-1}(C\times X)$ with the property
that
\begin{eqnarray} \label{eqphizZ} [\mathcal{Z}]_*: H^1(C,\mathbb{Z})\rightarrow \widetilde{N}^{n-2}H^{2n-3}(X,\mathbb{Z})_{\rm tf}
\end{eqnarray}
is surjective. We have a lot of freedom in choosing this curve. The cycle $\mathcal{Z}$ induces
a Walker  Abel-Jacobi morphism
$\widetilde{\Phi}_{\mathcal{Z}}=\widetilde{\Phi}_X\circ \mathcal{Z}_*:J(C)\rightarrow J(N^{n-2}H^{2n-3}(X,\mathbb{Z})_{\rm tf})$ with  lift

$$\widetilde{\widetilde{\Phi}}_\mathcal{Z}: J(C)\rightarrow J(\widetilde{N}^{n-2}H^{2n-3}(X,\mathbb{Z})_{\rm tf})$$
as explained in (\ref{eqliftAJ}), which is induced by the morphism of Hodge structures (\ref{eqphizZ}).
Choosing a reference point
$0\in C$, we get  an embedding $C\rightarrow J(C)$, hence a restricted Abel-Jacobi map

$$\widetilde{\Phi}_{\mathcal{Z},C,0}:C\rightarrow J(N^{n-2}H^{2n-3}(X,\mathbb{Z})_{\rm tf})$$
 with lift
$$\widetilde{\widetilde{\Phi}}_{\mathcal{Z},C,0}: C\rightarrow J(\widetilde{N}^{n-2}H^{2n-3}(X,\mathbb{Z})_{\rm tf}).$$

\begin{lemm} \label{prochoixdeC} Choosing $C$ and $0$ in an adequate way, we  can assume
the following:

(i) Let $\alpha: J(\widetilde{N}^{n-2}H^{2n-3}(X,\mathbb{Z})_{\rm tf})\rightarrow J(N^{n-2}H^{2n-3}(X,\mathbb{Z})_{\rm tf})$ be the natural isogeny
with torsion kernel  $T_\alpha$. Then
there are points $x_i\in C$, (say with $x_0=0$) such that the set of points

$$\{\widetilde{\widetilde{\Phi}}_{\mathcal{Z},C,0}(x_i)\}\subset J(\widetilde{N}^{n-2}H^{2n-3}(X,\mathbb{Z})_{\rm tf})$$
is equal to $T_\alpha$.

\vspace{0.5cm}

(ii)  The cycles $\mathcal{Z}_{x_i}-\mathcal{Z}_{x_0}$ are of torsion in ${\rm CH}_1(X)$.
\end{lemm}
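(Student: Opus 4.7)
The plan is to exploit the considerable freedom in the choice of $(C, \mathcal{Z}, 0)$: we build $C$ as a complete intersection curve inside the Picard variety $\mathrm{Pic}^0(\Sigma)$ of a suitable auxiliary surface $\Sigma$, arranged to pass through a prescribed finite set of torsion points encoding $T_\alpha$. By Proposition \ref{procylegalstrong} applied with $c = n-1$, we have $\widetilde{N}^{n-2}H^{2n-3}(X,\mathbb{Z})_{\rm tf} = \widetilde{N}_{1,{\rm cyl}}H^{2n-3}(X,\mathbb{Z})_{\rm tf}$, generated by the Poincar\'e divisor construction, so after amalgamating finitely many contributions into a single (possibly disconnected) smooth projective surface $\Sigma$ with a morphism $\Gamma : \Sigma \to X$, one produces a cycle $\mathcal{Z}^\Sigma = (Id,\Gamma)_* \mathcal{D} \in {\rm CH}^{n-1}(\mathrm{Pic}^0(\Sigma) \times X)$ for which $[\mathcal{Z}^\Sigma]_* : H^1(\mathrm{Pic}^0(\Sigma),\mathbb{Z}) \twoheadrightarrow \widetilde{N}^{n-2}H^{2n-3}(X,\mathbb{Z})_{\rm tf}$ is surjective. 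The induced morphism of complex tori $\widetilde{\widetilde{\Phi}}_{\mathcal{Z}^\Sigma}: \mathrm{Pic}^0(\Sigma) \to J(\widetilde{N}^{n-2}H^{2n-3}(X,\mathbb{Z})_{\rm tf})$ is therefore surjective.

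Next, I lift $T_\alpha$ to torsion points of $\mathrm{Pic}^0(\Sigma)$. Since $T_\alpha$ is finite (remark after Lemma \ref{letrivsurtore}, as $\alpha$ is an isogeny) and a surjective morphism of complex tori is surjective on torsion (use the divisibility of the identity component of the kernel to absorb the finitely many components), one may choose a finite set $P = \{p_0 = 0, p_1, \ldots, p_N\} \subset \mathrm{Pic}^0(\Sigma)$ consisting of torsion points with $\widetilde{\widetilde{\Phi}}_{\mathcal{Z}^\Sigma}(P) = T_\alpha$. Now take $C \subset \mathrm{Pic}^0(\Sigma)$ to be a smooth curve, cut out as a transverse complete intersection of $\dim \mathrm{Pic}^0(\Sigma) - 1$ hypersurfaces in a sufficiently positive $|L^{\otimes k}|$, each chosen in the sub-linear-system of sections vanishing on $P$ and in general position otherwise. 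A Bertini argument guarantees that for generic such choices the resulting $C$ is smooth everywhere and contains $P$; setting $0 := p_0$, the iterated Lefschetz hyperplane theorem gives an injection $H^1(\mathrm{Pic}^0(\Sigma),\mathbb{Z}) \hookrightarrow H^1(C,\mathbb{Z})$, and since the target $\widetilde{N}^{n-2}H^{2n-3}(X,\mathbb{Z})_{\rm tf}$ is torsion-free the restricted cycle $\mathcal{Z} := \mathcal{Z}^\Sigma|_{C \times X}$ inherits the required surjectivity.

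With $x_i := p_i \in C$, both conclusions are then formal. The restricted Abel--Jacobi map factors as the inclusion $C \hookrightarrow \mathrm{Pic}^0(\Sigma)$ followed by $\widetilde{\widetilde{\Phi}}_{\mathcal{Z}^\Sigma}$, so $\widetilde{\widetilde{\Phi}}_{\mathcal{Z},C,0}(x_i) = \widetilde{\widetilde{\Phi}}_{\mathcal{Z}^\Sigma}(p_i)$, and these exhaust $T_\alpha$ by construction, proving (i). For (ii), one has $\mathcal{Z}_{x_i} - \mathcal{Z}_{x_0} = \Gamma_*(\mathcal{D}_{p_i} - \mathcal{D}_{0})$ in $\mathrm{CH}_1(X)$; the key observation is that, by the universal property of the Poincar\'e divisor, $\mathcal{D}_{p_i} - \mathcal{D}_0$ is just the class of the line bundle on $\Sigma$ parametrized by $p_i \in \mathrm{Pic}^0(\Sigma)$, hence is $N_i$-torsion in $\mathrm{Pic}(\Sigma)$ whenever $p_i$ is $N_i$-torsion, and pushing forward by $\Gamma$ gives $N_i(\mathcal{Z}_{x_i} - \mathcal{Z}_{x_0}) = 0$ in $\mathrm{CH}_1(X)$. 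The only real technical point is the interpolating-Bertini argument producing a smooth $C$ through $P$ while preserving Lefschetz surjectivity; every other ingredient (finiteness of $T_\alpha$, surjectivity on torsion, and the Chow-theoretic meaning of a torsion point of $\mathrm{Pic}^0(\Sigma)$) is already in place.
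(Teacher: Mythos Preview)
Your proposal is correct and follows essentially the same strategy as the paper: build $C$ as a smooth complete intersection curve inside an auxiliary abelian variety, arranged to pass through a finite set of torsion points lifting $T_\alpha$, and restrict the universal cycle. The paper takes this abelian variety to be $J(C_0)$ for an initial curve $C_0$ with surjective $[\mathcal{Z}_0]_*$, whereas you take it to be $\mathrm{Pic}^0(\Sigma)$ for a surface $\Sigma\to X$ realizing $\widetilde{N}^{n-2}H^{2n-3}(X,\mathbb{Z})_{\rm tf}$; both choices work and the subsequent Lefschetz/Bertini arguments are identical.

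Your variant actually buys a small simplification in part (ii). In the paper's version one needs that a torsion point $\beta$ of an abelian variety $A$ gives a torsion $0$-cycle $[\beta]-[0]$ in $\mathrm{CH}_0(A)$, a standard but not entirely trivial fact (e.g.\ via Beauville's eigenspace decomposition under $n_k^{\phantom{*}}{}_*$). In your version, the fiber $\mathcal{D}_{p_i}-\mathcal{D}_{p_0}$ of the Poincar\'e divisor is \emph{by definition} the divisor class $p_i\in\mathrm{Pic}^0(\Sigma)$, so its being torsion in $\mathrm{CH}^1(\Sigma)$ is tautological, and pushing forward by $\Gamma$ immediately gives (ii). One minor remark on your write-up: the phrase ``since the target is torsion-free'' is not what secures the surjectivity of $[\mathcal{Z}]_*$; what you need (and have) is the surjectivity of $\iota_*:H_1(C,\mathbb{Z})\to H_1(\mathrm{Pic}^0(\Sigma),\mathbb{Z})$ from the Lefschetz theorem, which indeed is equivalent to the injectivity you state on $H^1$ because $H_1(\mathrm{Pic}^0(\Sigma),\mathbb{Z})$ is free.
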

\begin{proof} (i) We first start with any curve $C_0$ and  cycle $\mathcal{Z}_0$ with surjective
$[\mathcal{Z}_0]_*$ as in (\ref{eqphizZ}). Then we will replace $C_0$ by a general complete intersection curve
$C$ in $J(C_0)$ whose image in $J(\widetilde{N}^{n-2}H^{2n-3}(X,\mathbb{Z})_{\rm tf})$ passes   through all the points in $T_\alpha$. We observe that the cycle
$\mathcal{Z}_0\in {\rm CH}^{n-1}(C_0\times X)$ induces a cycle
$\mathcal{Z}_{0,J(C_0)}\in {\rm CH}^{n-1}(J(C_0)\times X)$ with the property that
$$[\mathcal{Z}_{0,J(C_0)}]_*:H_1(J(C_0),\mathbb{Z})\rightarrow \widetilde{N}^{n-2}H^{2n-3}(X,\mathbb{Z})_{\rm tf}$$
is surjective, so we can take for $\mathcal{Z}$ the restriction to $C\times X$ of $\mathcal{Z}_{0,J(C_0)}$, and
the surjectivity
of $[\mathcal{Z}]_*:H^1(C,\mathbb{Z})\rightarrow \widetilde{N}^{n-2}H^{2n-3}(X,\mathbb{Z})_{\rm tf}$ follows from the Lefschetz theorem on hyperplane
sections which gives the surjectivity of the map
$H^1(C,\mathbb{Z})\rightarrow H_1(J(C_0),\mathbb{Z})$.

\vspace{0.5cm}

(ii) We do the same construction as above, except that we first choose torsion elements $\beta_i\in J(C_0)$ over each $\alpha_i\in J(\widetilde{N}^{n-2}H^{2n-3}(X,\mathbb{Z})_{\rm tf})$. We then ask that $C\subset J(C_0)$ passes through the points $\beta_i$ at $x_i$.

As $\beta_i$ is  a torsion point of $J(C_0)$, the $0$-cycle $\{\beta_i\}-\{0\}$ is of torsion in ${\rm CH}_0(J(C_0))$, hence  the cycle
$$\mathcal{Z}_{x_i}-\mathcal{Z}_{x_0}=\mathcal{Z}_{0,J(C_0)*}(\{\beta_i\}-\{0\})$$
is of torsion in ${\rm CH}_1(X)$, which proves (ii).
\end{proof}

We also note that we can assume the cycle $\mathcal{Z}\in {\rm CH}^{n-1}(C\times X)$ to be effective, represented by a surface mapping  to $X$ and $C$ with  smooth fibers over the points $x_i$ (and semistable fibers otherwise).This follows indeed from the definition of  $ \widetilde{N}^{n-2}H^{2n-3}(X,\mathbb{Z})_{\rm tf}$ as coming from the degree $3$ homology of a smooth projective surface  $Y$ mapping to $X$. The statement thus follows from the corresponding assertion for any universal divisor on ${\rm Pic}^0(Y)\times Y$  restricted to $C\times Y$ for an  adequate choice of curve $C\subset {\rm Pic}^0(Y)$, to which we can apply Bertini type theorems by adding ample  divisors coming from $C$ and $Y$. We assume now that we are in the situation of Lemma
\ref{prochoixdeC}.
The cycles $\mathcal{Z}_{x_i}-\mathcal{Z}_{x_0}\in{\rm CH}^{n-1}(X)_{\rm alg}$  are thus of torsion
  by Lemma \ref{prochoixdeC}, (ii), and  annihilated by $\widetilde{\Phi}_X$ since
  we have
  $$\widetilde{\widetilde{\Phi}}_\mathcal{Z}(x_i-x_0)=\beta_i,\,\,\alpha\circ \widetilde{\widetilde{\Phi}}_\mathcal{Z}=\widetilde{\Phi}_{\mathcal{Z}},$$
  and $\alpha(\beta_i)=0$ by Lemma \ref{prochoixdeC},(i).
  By assumption, the  Walker Abel-Jacobi map $\widetilde{\Phi}_X$ is injective on torsion cycles, hence the cycles  $\mathcal{Z}_{x_i}-\mathcal{Z}_{x_0}\in{\rm CH}_1(X)_{\rm alg}$ are  rationally equivalent to $0$, which means that
  there exist smooth (not necessarily connected) projective surfaces   $\Sigma_i$,  and  morphisms $$f_i:\Sigma_i\rightarrow X,\,\,
  p_i:\Sigma_i\rightarrow \mathbb{P}^1,$$ such that
  \begin{eqnarray}
  \label{eqratzerodiv}  f_{i*}(p_i^{-1}(0)-p_i^{-1}(\infty))=\mathcal{Z}_{x_i}-\mathcal{Z}_{x_0}.
  \end{eqnarray}
  Let $\gamma_i$ be a continuous path from $x_0$ to $x_i$ on $C$. We thus get a real  $3$-chain
  $$\Gamma_i=(p_X)_*\mathcal{Z}_{\gamma_i}$$ in $X$ satisfying
  $$\partial\Gamma_i=\mathcal{Z}_{x_i}-\mathcal{Z}_{x_0}.$$
  Next, let $\gamma$ be  a continuous  path from $0$ to $\infty$ on $\mathbb{C}\mathbb{P}^1$. Then
  we get a real  $3$-chain
  $\Gamma'_i=f_{i*}(p_i^{-1}(\gamma))$ in $X$ also satisfying
  $$\partial\Gamma'_i=\mathcal{Z}_{x_i}-\mathcal{Z}_{x_0}.$$

  It follows that $\Gamma_i-\Gamma'_i$ satisfies
  $\partial(\Gamma_i-\Gamma'_i)=0$, hence has a homology  class
  \begin{eqnarray}\label{eqhomoclass} \eta_i\in H_{3 }(X,\mathbb{Z}),
  \end{eqnarray}
  which belongs to $N^{n-2} H_{3 }(X,\mathbb{Z})$ since  the chains $\Gamma_i,\,\Gamma'_i$ are supported on surfaces in $X$.

  We now apply to the  isogeny
    $$\alpha: J(\widetilde{N}^{n-2}H^{2n-3}(X,\mathbb{Z})_{\rm tf})\rightarrow J(N^{n-2}H^{2n-3}(X,\mathbb{Z})_{\rm tf})$$ the results of Section \ref{secaj}.
    For the clarity of the argument, it will be more convenient to use the homology groups
    $H_{3}(X,\mathbb{Z})$ instead of the cohomology groups $H^{2n-3}(X,\mathbb{Z})$ (they are isomorphic by Poincar\'e duality).
  We thus have
  the group isomorphism

  $$\overline{\alpha}: T_{\alpha}\rightarrow N^{n-2}H_{3}(X,\mathbb{Z})_{\rm tf}/\widetilde{N}^{n-2}H_{3}(X,\mathbb{Z})_{\rm tf}$$
   discussed in Lemma \ref{leautredeftphi}.
  \begin{lemm} \label{lepour3cycle} For any $i$, the class $\eta_i$ of (\ref{eqhomoclass}) satisfies
   \begin{eqnarray}
  \label{eqintamonter}\eta_i=\overline{\alpha}(\beta_i)\,\,{\rm in}\,\, N^{n-2}H_{3}(X,\mathbb{Z})_{\rm tf}/\widetilde{N}^{n-2}H_{3}(X,\mathbb{Z})_{\rm tf}.
  \end{eqnarray}
  \end{lemm}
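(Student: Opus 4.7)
The plan is to realize $\eta_i$ as an integer representative of $\beta_i$ in the sense of Lemma \ref{leautredeftphi}. That lemma computes $\overline{\alpha}(\beta_i)$ by lifting $\beta_i\in T_\alpha$ to a rational class $\tilde\beta_i\in\widetilde{N}^{n-2}H_3(X,\mathbb{Q})$ whose image under the inclusion into $N^{n-2}H_3(X,\mathbb{Q})$ lies in the integer lattice $N^{n-2}H_3(X,\mathbb{Z})_{\rm tf}$, and then taking the class of this integer image modulo $\widetilde{N}^{n-2}H_3(X,\mathbb{Z})_{\rm tf}$. I aim to show that $\eta_i$ is precisely such an integer image.

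By construction, the Abel-Jacobi class of $\Gamma_i$ in $J(\widetilde{N}^{n-2}H^{2n-3}(X,\mathbb{Z})_{\rm tf})$ equals
$$[\Gamma_i]=\widetilde{\widetilde{\Phi}}_{\mathcal{Z}}(x_i-x_0)=\beta_i.$$
The central step is to show that the alternative chain $\Gamma'_i=f_{i*}(p_i^{-1}(\gamma))$, despite having the same boundary as $\Gamma_i$, represents $0$ in the refined intermediate Jacobian. By the definition of Abel-Jacobi via period integrals, this reduces to showing $\int_{\Gamma'_i}\omega=0$ for every $\omega\in F^{n-1}H^{2n-3}(X,\mathbb{C})$. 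Fubini along $p_i\colon\Sigma_i\to\mathbb{P}^1$ gives
$$\int_{\Gamma'_i}\omega=\int_\gamma(p_i)_*(f_i^*\omega).$$
For $n\geq4$, the pullback $f_i^*\omega$ lies in $F^{n-1}H^{2n-3}(\Sigma_i,\mathbb{C})=0$ because $n-1>\dim_{\mathbb{C}}\Sigma_i=2$. For $n=3$, $f_i^*\omega\in H^{2,1}(\Sigma_i)$ fiber-integrates to a $\bar\partial$-closed $(1,0)$-form on $\mathbb{P}^1$, i.e.\ a holomorphic $1$-form, which vanishes because $H^0(\mathbb{P}^1,\Omega^1)=0$. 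Since $\pi_1(\mathbb{P}^1)=0$, any two choices of $\gamma$ differ by the boundary of a $2$-disk in $\mathbb{P}^1$, whose preimage in $\Sigma_i$ is a boundary; hence $[\Gamma'_i]$ is a well-defined class in the Jacobian, and equal to $0$.

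Combining $[\Gamma_i]=\beta_i$ and $[\Gamma'_i]=0$ in $J(\widetilde{N}^{n-2}H^{2n-3}(X,\mathbb{Z})_{\rm tf})$, we obtain $[\Gamma_i-\Gamma'_i]=\beta_i$. Since $\Gamma_i-\Gamma'_i$ is an integer $3$-cycle representing $\eta_i\in N^{n-2}H_3(X,\mathbb{Z})_{\rm tf}$, this says that $\eta_i$, viewed as a rational element of $\widetilde{N}^{n-2}H_3(X,\mathbb{Q})$ via the rational equality $\widetilde{N}^{n-2}H_3(X,\mathbb{Q})=N^{n-2}H_3(X,\mathbb{Q})$ (Deligne \cite{deligne}), projects to $\beta_i$ in the Jacobian. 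This rational element is therefore an admissible $\tilde\beta_i$ in the recipe of Lemma \ref{leautredeftphi}, whose integer image in $N^{n-2}H_3(X,\mathbb{Z})_{\rm tf}$ is $\eta_i$ itself. Thus $\overline{\alpha}(\beta_i)=[\eta_i]$ in $N^{n-2}H_3(X,\mathbb{Z})_{\rm tf}/\widetilde{N}^{n-2}H_3(X,\mathbb{Z})_{\rm tf}$, as required.

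The main obstacle is establishing the vanishing $[\Gamma'_i]=0$ in the refined Jacobian $J(\widetilde{N}^{n-2}H^{2n-3})$: rational equivalence of the boundary $\partial\Gamma'_i$ alone only yields vanishing in the coarser Jacobian $J^{2n-3}(X)$, whereas the refined statement requires the additional Hodge-theoretic input that the base of the fibration realizing the rational equivalence is $\mathbb{P}^1$, which carries no holomorphic $1$-forms. Were we to witness the rational equivalence by a higher-genus base curve, this step would fail; the use of $\mathbb{P}^1$, built into the definition of rational equivalence via morphisms $\Sigma_i\to\mathbb{P}^1$, is the crux of the argument.
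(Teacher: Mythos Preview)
Your overall strategy coincides with the paper's: show that the period integral of $\Gamma'_i$ against the relevant Hodge space vanishes (using that $\mathbb{P}^1$ carries no holomorphic $1$-forms), deduce $\int_{\eta_i}=\int_{\Gamma_i}$, and then identify this with the recipe of Lemma~\ref{leautredeftphi} for $\overline{\alpha}$. Your final paragraph correctly isolates the crux.

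There is, however, a degree confusion that makes your argument for $n\geq4$ vacuous. The Abel--Jacobi map for $1$-cycles is computed by integrating closed $3$-forms in $F^{2}H^{3}(X,\mathbb{C})$ (types $(3,0)+(2,1)$) over the bounding $3$-chain; equivalently, $J^{2n-3}(X)\cong (F^{2}H^{3}(X))^{*}/H_{3}(X,\mathbb{Z})_{\rm tf}$ via Poincar\'e duality. You instead take $\omega\in F^{n-1}H^{2n-3}(X,\mathbb{C})$. For $n=3$ the two spaces coincide and your argument is correct and matches the paper's. For $n\geq4$, integrating a $(2n-3)$-form over a $3$-chain does not make sense, and your claim ``$f_i^{*}\omega\in F^{n-1}H^{2n-3}(\Sigma_i,\mathbb{C})=0$'' is true for the trivial reason that $2n-3>4=\dim_{\mathbb{R}}\Sigma_i$, but it does not address the actual vanishing you need. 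The fix is immediate: replace $F^{n-1}H^{2n-3}$ by $F^{2}H^{3}$ throughout, and then your $n=3$ argument (fiber-integrating a $(2,1)$-form on $\Sigma_i$ to a holomorphic $1$-form on $\mathbb{P}^1$, which must vanish) works uniformly for all $n$, exactly as in the paper.
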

  \begin{rema}{\rm The construction of $\eta_i$ depends on the choice of $\gamma_i$, so it is in fact  naturally defined only modulo a class coming from
  $H_1(C,\mathbb{Z})$, hence modulo $\widetilde{N}^{n-2}H_{3}(X,\mathbb{Z})_{\rm tf}$.}
  \end{rema}
  \begin{proof}[Proof of Lemma \ref{lepour3cycle}] As we are working with the torsion free part $H_3(X,\mathbb{Z})_{\rm tf}$, which embeds
  in the complex vector space $F^{2}H^{3}(X)^*$, it suffices to check
  the result after integration of classes in $F^{2}H^{3}(X)$. These classes are represented by
  closed forms $\nu$ of type $(3,0)+(2,1)$ on $X$. When pulling-back these forms on $\Sigma_i$ via $f_i$ and push-forward to
  $\mathbb{P}^1$ via $p_{i}$ we get $0$ since there are no nonzero holomorphic forms on $\mathbb{P}^1$. We thus conclude that
  $\int_{\Gamma'_i}\nu=0$,
  hence
\begin{eqnarray}\label{eqintegrale} \int_{\eta_i}\nu=\int_{\Gamma_i}\nu,
\end{eqnarray}
for any closed form $\nu$ of type $(3,0)+(2,1)$ on $X$.

It remains to understand why (\ref{eqintegrale}) is equivalent to (\ref{eqintamonter}). In fact, consider the general case
of an isogeny
$\phi:A=A_\mathbb{R}/A_\mathbb{Z}\rightarrow B_\mathbb{R}/B_\mathbb{Z}$
of real tori, with induced morphisms
$$\phi_{\mathbb{Z}}:H_1(A,\mathbb{Z})\rightarrow H_1(B,\mathbb{Z}),$$
$$\phi_{\mathbb{R}}:H_1(A,\mathbb{R})\rightarrow H_1(B,\mathbb{R})$$
on degree $1$ homology.  Then, referring to the proof of Lemma \ref{letrivsurtore} for the notation,
the  isomorphism   $\overline{\phi}:T_\phi\rightarrow B_{\mathbb{Z}}/\phi_\mathbb{Z}(A_\mathbb{Z})$
is obtained by passing to the quotient  from   the natural map
$\phi_{\mathbb{R}}^{-1}( H_1(B,\mathbb{Z}))=:D_\phi\rightarrow H_1(B,\mathbb{Z}))$ given  by restricting $\phi_{\mathbb{R}}$ to $D_\phi$.

In our case, the map $\alpha_{\mathbb{R}}$ is induced by the cylinder map associated with the cycle $\mathcal{Z}$, and the choice of a path
$\gamma_i$ from $x_0$ to $x_i$ determines a class in $H_1(J(C),\mathbb{R})$ whose image in $J(C)$ is the Abel-Jacobi image of $x_i-x_0$.
The image of this class under $\alpha_{\mathbb{R}}$ is  the element $\int_{\Gamma_i}\in F^{2}H^{3}(X)^*\cong H^{3}(X,\mathbb{R})^*$.
Hence the equality (\ref{eqintegrale}) exactly says that $\overline{\alpha}(\beta_i)=\eta_i$ modulo torsion and ${\rm Im}\,\alpha_{\mathbb{Z}}$.
 \end{proof}

 We now conclude the proof of Theorem \ref{theocylinder}.
 We start from a smooth projective  curve $C$ and cycle $\mathcal{Z}\in{\rm CH}^{n-1}(C\times X)$ satisfying
  the properties stated in Lemma \ref{prochoixdeC} and such that
 $$[\mathcal{Z}]_*(H_1(C,\mathbb{Z}))=\widetilde{N}^{n-2}H_{3}(X,\mathbb{Z})_{\rm tf}.$$
We then get as in (\ref{eqratzerodiv})
the surfaces $\Sigma_i$ and
the morphisms
\begin{eqnarray}
\label{eqdtaa} f_i:\Sigma_i\rightarrow X,\,p_i:\Sigma_i\rightarrow \mathbb{P}^1
\end{eqnarray}
with the property that
\begin{eqnarray}
\label{eqdtaa2}f_{i*}(p_{i}^{-1}(0)-p_{i}^{-1}(\infty))=\mathcal{Z}_{x_i}-\mathcal{Z}_{x_0}.
\end{eqnarray}
Let us first explain the proof in a simplified  case. Assume that there is a single index $i=1$,
 $f_1$ is an embedding along the curves  $p_{1}^{-1}(0)$ and $p_{1}^{-1}(\infty)$ which are smooth curves in $\Sigma_1$, and we have identifications of smooth curves in $X$
 \begin{eqnarray}
 \label{eqnaive}  f_1(p_{1}^{-1}(0))=\mathcal{Z}_{x_1},\,\,f_1(p_{1}^{-1}(\infty))=\mathcal{Z}_{x_0}.
 \end{eqnarray}
 In this case, we construct the singular curve
 $C'$ as the union of $C$ and   a copy of $\mathbb{P}^1$ glued by two points to $C$, with
 $0\in\mathbb{P}^1$ identified  to $x_1\in C$, $\infty\in\mathbb{P}^1$ identified  to $x_0\in C$.
 Over $C'$, we put the family $\mathcal{Z}'\rightarrow C'$ of curves in $X$ which over $C$ is $f:\mathcal{Z}\rightarrow X,\,p:\mathcal{Z}\rightarrow C$ and over $\mathbb{P}^1$ is
 $f_1:\Sigma_1\rightarrow X,\,p_1:\Sigma_1\rightarrow\mathbb{P}^1$. They glue by assumption over the intersection points using the identifications  (\ref{eqnaive}). Flatness is
 easy to check in this case. For semi-stability, it suffices to restrict to the Zariski open set $C'_0$ of $C'$ (which contains all the singular points of $C'$)
 parameterizing semi-stable fibers.

 If we now look at the cylinder homomorphism
 $$\mathcal{Z}'_*:H_1(C'_0,\mathbb{Z})\rightarrow H_3(X,\mathbb{Z})_{\rm tf},$$
 its image contains $\mathcal{Z}_*H_1(C,\mathbb{Z})=\widetilde{N}^{n-2}H_3(X,\mathbb{Z})$
 and an extra generator over the loop in $C'_0$ made of the pathes $\gamma$ on $\mathbb{P}^1$ and $\gamma_1$ on $C$ (which we can assume to avoid the points with  non-semistable fibers).
 Lemma \ref{lepour3cycle} tells us that the image of this path under $\mathcal{Z}'_*$
 is the element   $\eta_1$ of $N^{n-2}H_3(X,\mathbb{Z})_{\rm tf}$ which, together with $\widetilde{N}^{n-2}H_3(X,\mathbb{Z})_{\rm tf}$, generates $N^{n-2}H_3(X,\mathbb{Z})_{\rm tf}$. As ${\rm Im}\,\mathcal{Z}'_*\subset N_{1,{\rm cyl},{\rm st}}H_3(X,\mathbb{Z})$,   we proved
 the theorem in this case.

\begin{rema}{\rm  The reason why the above argument does not cover the general case is the fact that rational equivalence of two curves in $X$ does  not in general take the simple form described above.
}
\end{rema}

 Let us now prove the general case. Out of the data (\ref{eqdtaa}), (\ref{eqdtaa2}), we shall construct
 a modified family over a singular curve as above. Up to now, we have not been really using the fact that we are working with $1$-cycles, but we will use it now. We fix  $i$ and prove the following:
\begin{claim}
\label{claimmod} After replacing   $X$ by $X\times \mathbb{P}^r$ and modifying the family $p:\mathcal{Z}\rightarrow C,\,f:\mathcal{Z}\rightarrow X$   by gluing
 components $\mathcal{Z}'_l\rightarrow C,\,\mathcal{Z}'_l\rightarrow X$ with trivial Abel-Jacobi map,
  we can choose the rational equivalence    relation (\ref{eqdtaa}), (\ref{eqdtaa2})  so that it  takes the following form: There exist a chain
  $C_1,\ldots,\,C_m$ of smooth  curves  with two  marked points
  $s_j,\,t_j\in C_j$, glued by $t_{j}=s_{j+1}$, and
   surfaces $\Sigma_j$, $j=1,\ldots,\, m$ with two maps
  \begin{eqnarray}\label{sigmajfjpj} f_j:\Sigma_j\rightarrow X,\,\,p_j: \Sigma_j\rightarrow C_j,
  \end{eqnarray}
  satisfying the conditions:

  (i) For each $j=1,\ldots,\,m$,   $f_j$ is an embedding, and the relation$p_j$ is flat with semistable fibers  (so (\ref{sigmajfjpj}) is a family of stable maps to $X$ parameterized by $C_j$).

  (ii) For $1\leq j\leq m-1$, the stable map $f_j:p_j^{-1}(t_j)\rightarrow X$ is isomorphic to  the stable map $f_{j+1}:p_{j+1}^{-1}(s_{j+1})\rightarrow X$.

  (iii) We have equalities of stable maps $$(f_1:p_{1}^{-1}(s_{1})\rightarrow X)=(f_{\mid \mathcal{Z}_{x_i}}:\mathcal{Z}_{x_i}\rightarrow X),\,\,(f_m:p_{m}^{-1}(t_{m})\rightarrow X)=(f_{\mid \mathcal{Z}_{x_0}}:\mathcal{Z}_{x_0}\rightarrow X).$$

  (iv)  The Abel-Jacobi map $C_j\rightarrow J^{2n-3}(X)$ is trivial for each family of curves $p_j:\Sigma_j\rightarrow C_j,\,f_j:\Sigma_j\rightarrow X$.

  Furthermore, we can choose the surfaces $\Sigma_j$ to be unions of smooth surfaces with normal crossings.
  \end{claim}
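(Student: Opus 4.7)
The plan is to transform the given rational-equivalence data $(\Sigma_i, p_i, f_i)$ of (\ref{eqdtaa})--(\ref{eqdtaa2}) into the desired chain $(C_j, \Sigma_j, s_j, t_j, p_j, f_j)$ through three constructions, making systematic use of the permitted replacement of $X$ by $X \times \mathbb{P}^r$ and of the permitted addition of trivial-Abel--Jacobi components to $\mathcal{Z}$.

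First, we reduce to the embedded setting. For a sufficiently positive linear system on $\Sigma_i$ with associated morphism $g_i\colon\Sigma_i\to\mathbb{P}^r$, the combined morphism $(f_i,g_i)\colon\Sigma_i\hookrightarrow X\times\mathbb{P}^r$ is a closed embedding. The family $\mathcal{Z}\subset C\times X$ is lifted correspondingly to $\mathcal{Z}'\subset C\times X\times\mathbb{P}^r$ by choosing fiberwise maps to $\mathbb{P}^r$. Any extra components introduced in the process can be arranged to lie in $\mathbb{P}^r$-fibers of the projection $X\times\mathbb{P}^r\to X$; by the K\"unneth formula such families contribute zero to the Abel--Jacobi map into $J^{2n+2r-3}(X\times\mathbb{P}^r)$.

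Second, we decompose the pencil $p_i\colon\Sigma_i\to\mathbb{P}^1$ as a chain of pencils over rational curves. Combining desingularization of $\Sigma_i$ with the semistable reduction theorem of Kempf--Knudsen--Mumford--Saint-Donat for $p_i$, and using the properness of the Kontsevich moduli space of stable maps to $X\times\mathbb{P}^r$, we produce a semistable flat family $\widetilde p\colon\widetilde\Sigma\to\widetilde C$ over a tree of smooth curves which we arrange, by inserting rational components at critical values of $p_i$, to be a chain of projective lines $C_1,\ldots,C_m$ glued at nodes $t_j=s_{j+1}$. The corresponding surfaces $\Sigma_j$ are made into unions of smooth surfaces with normal crossings by further resolution, and embeddedness of each $f_j$ is ensured by enlarging $r$ if necessary. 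Since each $C_j\simeq\mathbb{P}^1$, any morphism from $C_j$ to the complex torus $J^{2n-3}(X\times\mathbb{P}^r)$ is constant, and condition (iv) is automatic.

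Third, we reconcile the stable-map structure at the chain endpoints with the stable maps $\mathcal{Z}_{x_i}\to X\times\mathbb{P}^r$ and $\mathcal{Z}_{x_0}\to X\times\mathbb{P}^r$. After the previous steps, $p_1^{-1}(s_1)$ and $\mathcal{Z}_{x_i}$ are cycle-equivalent in $X\times\mathbb{P}^r$ but may differ as stable maps (and similarly at $t_m$). Any such discrepancy is absorbed by gluing further families of curves parameterized by rational curves with image in $\mathbb{P}^r$-fibers; this simultaneously modifies $\mathcal{Z}'$ by gluing trivial-Abel--Jacobi components (as permitted) and adjusts the stable-map structure at the chain endpoints. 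The hardest part will be carrying out this reconciliation while preserving embeddedness, flatness, and semistability of $(\Sigma_j, p_j, f_j)$ simultaneously along the entire chain; the key source of flexibility is that $r$ may be freely enlarged, so that any finite collection of stable-map discrepancies can be realized inside contractible configurations in some $X\times\mathbb{P}^r$.
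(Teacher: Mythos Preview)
Your proposal has a genuine gap, concentrated in Step~3 but foreshadowed by a misconception in Step~2.

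The central issue is this: the relation (\ref{eqdtaa2}) is an equality of \emph{cycles}
\[
f_{i*}\bigl(p_i^{-1}(0)-p_i^{-1}(\infty)\bigr)=\mathcal{Z}_{x_i}-\mathcal{Z}_{x_0},
\]
which says only that $f_{i*}p_i^{-1}(0)-\mathcal{Z}_{x_i}=f_{i*}p_i^{-1}(\infty)-\mathcal{Z}_{x_0}=:D$ for a common $1$-cycle $D$ that is in general neither zero nor effective. Thus your assertion that ``$p_1^{-1}(s_1)$ and $\mathcal{Z}_{x_i}$ are cycle-equivalent'' is simply false: after any semistable completion of $p_i$, the end fibers realize $\mathcal{Z}_{x_i}+D$ and $\mathcal{Z}_{x_0}+D$, not $\mathcal{Z}_{x_i}$ and $\mathcal{Z}_{x_0}$. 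Your remedy---gluing families whose image lies in $\mathbb{P}^r$-fibers of $X\times\mathbb{P}^r$---cannot touch $D$, since such curves push forward to points in $X$. More seriously, the permitted modification of $\mathcal{Z}$ allows only \emph{gluing} components over $C$, i.e.\ adding the same effective curve to both endpoints; when $D$ has negative coefficients there is no way to absorb it this way. Building a trivial-Abel--Jacobi chain from $p_i^{-1}(0)$ to a modified $\mathcal{Z}_{x_i}$ is therefore not a cosmetic matter: it is precisely the content of the claim, and it is where the real work lies. The paper carries this out by first factoring each rational function on $\Sigma_i$ into Lefschetz pencils with controlled divisors, then passing through auxiliary rational equivalences on the surfaces $D\times\mathbb{P}^1$ so that every irreducible component eventually appears twice with opposite sign except for $\mathcal{Z}_{x_0}$ and $\mathcal{Z}_{x_i}$ themselves; the residual mismatch of \emph{stable-map structure} (same curve $A$, different gluings) is then handled by two further lemmas (\ref{lepourrecoler} and \ref{elemmefinal}) which produce the missing links via deformations inside an auxiliary surface.

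Your Step~2 is also problematic as written. Semistable reduction in the sense of Kempf--Knudsen--Mumford--Saint-Donat requires a finite branched cover of the base, which in general has positive genus; you cannot obtain a chain of $\mathbb{P}^1$'s merely ``by inserting rational components at critical values''. If instead you invoke properness of the Kontsevich space, you get an extension over the original $\mathbb{P}^1$ (a single component, not a chain), but then the limit fibers over $0$ and $\infty$ are whatever stable-map limits arise, bringing you straight back to the endpoint-matching problem of Step~3 with no additional control. In short, the architecture of your argument pushes all of the difficulty into a final step for which no mechanism is supplied.
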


 Claim \ref{claimmod} concludes the proof of Theorem \ref{theocylinder} by the same argument as before, except that the loop  $\gamma\cup \gamma_1$ on $\mathbb{P}^1\cup C$ is replaced by the  continuous path $\gamma\cup\gamma_1$ on $C'=\cup_jC_j\cup C$  constructed as follows:
 let $C'$ be the curve which is the union $\cup_jC_j\cup C$, with the points $t_j,\,s_{j+1}$ identified for $j\leq m-1$ and the points $s_1$ identified with $x_0$, the point $t_m$ identified with $x_i$.
 We choose the continuous path $\gamma$ on $\cup_jC_j\subset C'$ to be  the union of  arbitrarily chosen  pathes from
 $s_j$ to $t_j$ on $C_j$. We thus have a closed $1$-chain
$\gamma\cup \gamma_1$ on $C'$.
  There is a  family of semistable maps
 \begin{eqnarray}
 \label{eqfamilySigmaprime} f': \Sigma'\rightarrow X,\,p': \Sigma'\rightarrow C',
 \end{eqnarray}
 constructed from Claim \ref{claimmod} by gluing the various pieces
 $$f_j:\Sigma_j\rightarrow X,\,p_j:\Sigma_j\rightarrow C_j$$
 using the identifications (ii) and (iii). Using  the fact that the Abel-Jacobi map
 associated with the families $\Sigma_j\rightarrow C_j,\,\Sigma_j\rightarrow X$ is trivial on $C_j$ (assumption (iv)), we  conclude as
   in Lemma \ref{lepour3cycle} that the element
 $$\eta_i\in N^{n-2}H_3(X,\mathbb{Z})_{\rm tf}/\widetilde{N}^{n-2}H_3(X,\mathbb{Z})_{\rm tf}$$
is the image of an element of  $N_{1,{\rm cyl, st}}H_3(X,\mathbb{Z})_{\rm tf}$, namely the image of the class of  $\gamma\cup \gamma_1$ in $H_1(C',\mathbb{Z})$   under the cylinder
 homomorphism associated to the family (\ref{eqfamilySigmaprime}).
 Doing this for every $i$, we conclude
that $N_{1,{\rm cyl, st}}H_3(X,\mathbb{Z})_{\rm tf}=N^{n-2}H_3(X,\mathbb{Z})_{\rm tf}$.
\end{proof}
 \begin{proof}[Proof of Claim \ref{claimmod}] Starting from the data   (\ref{eqdtaa}), (\ref{eqdtaa2}), where we fix $i$  and write $\Sigma_i$
 as a disjoint union of smooth connected  surfaces $\Sigma_j$ mapping to $X$ via $f_j$ and to $\mathbb{P}^1$ via $p_j$, we can choose embeddings $i_j:\Sigma_j\hookrightarrow \mathbb{P}^r$
 and then $f'_j=(f_j,i_j):\Sigma_j\rightarrow X\times \mathbb{P}^r$ is an embedding.
 We can even assume that the surfaces $f'_j(\Sigma_j)$ are disjoint.
 Next we observe that, by resolution of singularities,  for each surface $\Sigma_j$, the group of nonzero rational functions on $\Sigma_j$ is generated
 by those rational functions $\phi:\Sigma_j\dashrightarrow\mathbb{P}^1$ with the following property:
  after replacing $\Sigma_j$ by a blowing-up
 $\widetilde{\Sigma}_{j,\phi}$, $\phi$ induces a surjective  map
 $\Sigma_j\rightarrow \mathbb{P}^1$ which is a Lefschetz pencil. Furthermore,  the
 divisor of $\phi$ is   up to sign  of the form $A-B-C$,  where $A,\,B,\,C$ are smooth irreducible curves.

 The $p_j$ are given by rational funtions $\phi_j$ on $\Sigma_j$, that we factor as above
 $$\phi_j=\phi_{j1}\ldots \phi_{j2}\ldots\phi_{js_j}$$
  on $\Sigma_j$, with  corresponding  blown-up surfaces $\Sigma_{jl}:=\widetilde{\Sigma}_{j,\phi_l}$ and morphisms
   $p_{jl}$ to $\mathbb{P}^1$. We choose  disjoint  embeddings $i_{jl}$, $l=1,\ldots, \,{s_j}$ of the surfaces $\Sigma_{jl}$ in
 $\mathbb{P}^r$ and do the same trick as before.
 After performing these operations, we get surfaces $\Sigma_{jl}\stackrel{f'_{jl}}{\hookrightarrow} X\times \mathbb{P}^r$
 with morphisms $p_{jl}:\Sigma_{jl}\rightarrow \mathbb{P}^1$, satisfying condition (i).

 Let $\pi:X\times \mathbb{P}^r\rightarrow X$ be the first projection. The data above satisfy the equality of cycles
 \begin{eqnarray} \label{eqratequivdefor} \pi_*(\sum_{jl}f'_{jl*}({\rm div}\,\phi_{jl}))=\mathcal{Z}_{x_i}-\mathcal{Z}_{x_0}.
 \end{eqnarray}
 Note that the curves $\mathcal{Z}_{x_i}$ and $\mathcal{Z}_{x_0}$ can be assumed to be smooth connected.
 We can also  assume, by removing finitely many points of  $X$  and working with the complement $X^0$ if necessary,  that all the irreducible  curves in the support of
 $f_{jl}( {\rm div}\,\phi_{jl})$ map  to smooth   curves $D$  in $X^0$. Denote by $D_{\alpha,0}$, (resp.
 $D_{\beta,i}$) the curves  in ${\rm Supp}\,(\sum_{jl}f'_{jl*}({\rm div}\,\phi_{jl}))$ mapping to $\mathcal{Z}_{x_0}$   (resp.   to $\mathcal{Z}_{x_i}$) via $\pi$, and, for  any other curve   $D\subset X^0$,  by $D_{\gamma,D}$  the curves  in ${\rm Supp}\,(\sum_{jl}f'_{jl*}({\rm div}\,\phi_{jl}))$ mapping to $D$. Then it follows from (\ref{eqratequivdefor}) that
\begin{eqnarray}\label{eqpourcoefficients} \sum_\alpha n_{\alpha,0}{\rm deg}\,(D_{\alpha,0}/\mathcal{Z}_{x_0})=1\\
\nonumber \sum_\beta n_{\beta,i}{\rm deg}\,(D_{\beta,i}/\mathcal{Z}_{x_i})=-1\\
\nonumber
\sum_\gamma n_{\gamma,D}{\rm deg}\,(D_{\gamma,D}/D)=0.
\end{eqnarray}
where $n_{\alpha,0}=\pm 1$ is the multiplicity of $D_{\alpha,0}$ in the cycle $\sum_{jl}f'_{jl*}( {\rm div}\,\phi_{jl})$, and similarly for $n_{\beta,i}$ and $n_{\gamma,D}$.
Assuming $r=1$ for simplicity, each curve $D_{\alpha,0}$ is rationally equivalent   in the surface
$\mathcal{Z}_{x_0}\times \mathbb{P}^1$
to
a disjoint union of ${\rm deg}\,(D_{\alpha,0}/\mathcal{Z}_{x_0})$ sections $\mathcal{Z}_{x_0}\times t_{\alpha,0,s}$, $t_{\alpha,0,s}\in\mathbb{P}^1$, modulo vertical curves
$x\times \mathbb{P}^1$, which provides   a rational function $\psi_{\alpha,0}$ on $\mathcal{Z}_{x_0}\times \mathbb{P}^1$. Similarly for $\mathcal{Z}_{x_i}$ and $D$, providing rational functions $\psi_{\alpha,0}$ on $\mathcal{Z}_{x_0}\times \mathbb{P}^1$, resp. $\psi_{\beta,i}$ on
$\mathcal{Z}_{x_i}\times \mathbb{P}^1$ and $\psi_{\gamma,D}$ on  $D\times \mathbb{P}^1$.  Using (\ref{eqpourcoefficients}) and choosing
 another   point $t_0\in \mathbb{P}^1$, we finally have a rational function
$\psi_0$ on $\mathcal{Z}_{x_0}\times \mathbb{P}^1$, resp. $\psi_i$ on $\mathcal{Z}_{x_i}\times \mathbb{P}^1$, resp.
$\psi_D$ on $D\times \mathbb{P}^1$ for each curve $D\not=\mathcal{Z}_{x_0},\,\mathcal{Z}_{x_i}$, such that  the following equalities of $1$-cycles in $X^0\times \mathbb{P}^1$
\begin{eqnarray}\label{eqdivpsi0iD} {\rm div}\,\psi_0=\sum_{\alpha,s}n_{\alpha,0}\mathcal{Z}_{x_0}\times t_{\alpha,0,s}-\mathcal{Z}_{x_0}\times t_0
\\
\nonumber
{\rm div}\,\psi_i=\sum_{\beta,s'}n_{\beta,i}\mathcal{Z}_{x_i}\times t_{\beta,i,s'}+\mathcal{Z}_{x_i}\times t_0
\\
\nonumber
{\rm div}\,\psi_D=\sum_{\gamma,s''}n_{\gamma,D} D\times t_{\gamma,D,s''}.
\end{eqnarray}
hold modulo vertical cycles $z\times \mathbb{P}^1$,
$z\in\mathcal{Z}_0(X^0)$. Equivalently, these equalities hold in $\mathcal{Z}_1(X^{00}\times \mathbb{P}^1)$, where $X^{00}\subset X^0$ is the complement in $X$ of finitely many points.

Adding to the previous surfaces $\Sigma_{jl}\stackrel{f'_{jl}}{\rightarrow}X\times \mathbb{P}^1$ and rational functions
$\phi_{jl}$  the surfaces
$$\mathcal{Z}_{x_0}\times \mathbb{P}^1,\,\mathcal{Z}_{x_i}\times \mathbb{P}^1,\,D\times \mathbb{P}^1$$
naturally contained in $X\times \mathbb{P}^1$, with  the rational functions
$\psi_{j,0}$ and  $\psi_0$,   the surface $\mathcal{Z}_{x_i}\times \mathbb{P}^1$ with  the rational functions
$\psi_{j,i}$ and $\psi_i$, and the surfaces $D\times  \mathbb{P}^1$, with  the rational functions
$\psi_{j,D}$ and  $\psi_D$, we arrived now at a situation where we have
surfaces $\Sigma'_l\subset X^{00}\times \mathbb{P}^1$, and rational functions $\chi_l$ on $\Sigma'_l$ such that
the $1$-cycles ${\rm div}\,\chi_l$ of $X^{00}\times \mathbb{P}^1$ have the property that each irreducible curve in
$\cup_l{\rm Supp}\,({\rm div}\,\chi_l)$ appears twice with opposite multiplicities $\pm1$  in $\sum_l{\rm div}\,\chi_l$, except for
$\mathcal{Z}_{x_0}\times t_0$ and $\mathcal{Z}_{x_1}\times t_0$ which appear only once, the first one with multiplicity $1$, the second one with multiplicity $-1$.
Working now over the whole of  $X$, and taking the Zariski closure $\overline{\Sigma'_l}\subset X\times \mathbb{P}^1$ of these surfaces in $X\times \mathbb{P}^1$, we find that
the $1$-cycle $\sum_l{\rm div}\,\chi_l$ of $X\times \mathbb{P}^1$  is the sum of   a  vertical $1$-cycle  $z\times \mathbb{P}^1$ and
$\mathcal{Z}_{x_0}\times t_0-\mathcal{Z}_{x_i}\times t_0$. Recalling that the supports of the divisors of the original rational functions
on the  original surfaces in $X\times \mathbb{P}^1$ were normal crossing divisors,
we can arrange by looking more closely at the surfaces $D\times \mathbb{P}^1$ (and in particular by normalizing the curves $D$) that this  is still true for the supports of the divisors ${\rm div}\,\chi_l$ in $\Sigma'_l$.  The cycle
$z\times \mathbb{P}^1$ is  rationally equivalent to $0$ in
$X\times\mathbb{P}^1$, because the cycle
$$\sum_l{\rm div}\,\chi_l= \mathcal{Z}_{x_0}\times t_0-\mathcal{Z}_{x_i}\times t_0+z\times \mathbb{P}^1$$
is rationally equivalent to $0$, and the cycle
$\mathcal{Z}_{x,0}\times t_0-\mathcal{Z}_{x,i}\times t_0$ is rationally equivalent to $0$. It follows that $z$ is rationally equivalent to  $0$ in $X$.
Writing $z=\sum_\beta\epsilon_\beta x_\beta$,
 with  $x_\beta\in X$, $\epsilon_\beta=\pm1$,
this provides us with a curve $E$ in $X$, and a rational function $\psi_E$ on $E$ with divisor
$\sum_\beta\epsilon_\beta x_\beta$, hence also a surface
$E\times \mathbb{P}^1\subset X\times \mathbb{P}^1$ with rational function $\tilde{\psi}_E$ with divisor
$\sum_\beta\epsilon_\beta x_\beta\times \mathbb{P}^1$ in $X\times \mathbb{P}^1$. As  the function $\psi_E:E\rightarrow \mathbb{P}^1$
has nonreduced fibers (corresponding  to ramification),
 this last rational function  $\tilde{\psi}_E$ does  not provide a semistable family but this is not a serious issue. Indeed, we did not ask in Claim
 \ref{claimmod} that the curves are projective, so we can simply remove the points  parameterizing nonreduced  fibers, assuming they are not gluing points.

Consider the disjoint union $\Sigma''$ of all the surfaces above, with morphisms
\begin{eqnarray}\label{eqderderpour**}p'':\Sigma''\rightarrow \mathbb{P}^1
,\,\,f'': \Sigma''\rightarrow X\times \mathbb{P}^1.
\end{eqnarray}
We find that
$$f''_*({p''}^*(0-\infty))=\mathcal{Z}_{x_0}\times t_0-\mathcal{Z}_{x_i}\times t_0$$
and more precisely that
we have
\begin{eqnarray}\label{eqderderpour*} f''({p''}^{-1}(0))=\mathcal{Z}_{x_0}\times t_0\cup A,\,\, f''({p''}^{-1}(\infty))=\mathcal{Z}_{x_i}\times t_0\cup A,
\end{eqnarray}
where both curves   $\mathcal{Z}_{x_0}\times t_0\cup A$ and $ \mathcal{Z}_{x_i}\times t_0\cup A$
have ordinary double points and both maps
$$f''_0: {p''}^{-1}(0)\rightarrow  \mathcal{Z}_{x_0}\times t_0\cup A,\,\,f''_\infty: {p''}^{-1}(\infty)\rightarrow  \mathcal{Z}_{x_i}\times t_0\cup A$$
are partial normalizations. In other words, we almost  achieved the previous situation of  (\ref{eqnaive}), but with the curve
$A$  glued to $\mathcal{Z}_{x_0}\times t_0$ and $\mathcal{Z}_{x_i}\times t_0$.
From now on, we write $X$ for $X\times \mathbb{P}^1$ and  $\mathcal{Z}_{x_0}$ for  $\mathcal{Z}_{x_0}\times t_0$,  $\mathcal{Z}_{x_i}$ for  $\mathcal{Z}_{x_i}\times t_0$.
The two fibers
\begin{eqnarray}
\label{eqstablefiber}  f''_0:{p''}^{-1}(0)\rightarrow X,\,\,{\rm  resp.}\,\, f''_\infty:{p''}^{-1}(\infty)\rightarrow X
\end{eqnarray}
 are  obtained by gluing to the curve
$\mathcal{Z}_{x_0}$, resp. $\mathcal{Z}_{x_i}$, a curve  $A_0\rightarrow X$, resp. $A_\infty \rightarrow X$, partially normalizing $A$. In other words
\begin{eqnarray}\label{eqnew2504}  {p''}^{-1}(0)=\mathcal{Z}_{x_0}\cup A_0,\,\,\,  {p''}^{-1}(\infty)=\mathcal{Z}_{x_i}\cup A_\infty.
\end{eqnarray}
Unfortunately, the two stable maps  $A_0\rightarrow X$ and  $A_\infty \rightarrow X$ a priori are different, and it
not even clear that we glue them respectively  to  $\mathcal{Z}_{x_0}$ and  $\mathcal{Z}_{x_i}$  by  the same number of points.
What we know however  is that  the genera
of  $\mathcal{Z}_{x_0}$ and $\mathcal{Z}_{x_i}$ are equal, and the genera of the fibers
$\mathcal{Z}_{x_0}\cup A_0$ and  $\mathcal{Z}_{x_i}\cup A_\infty$ appearing in (\ref{eqnew2504}) are equal, because in both cases,  by construction, these curves
are deformations of each other. It follows that the total  numbers of
gluing points  in the unions   $\mathcal{Z}_{x_0}\cup A_0$ and  $\mathcal{Z}_{x_i}\cup A_\infty$ (including those between  the components of $A_0$ or $A_\infty$)  are the same.
Let $W_0$ be the set of gluing points in ${p''}^{-1}(0)=\mathcal{Z}_{x_0}\cup_jA_j$. The set $W_0$  splits into
a union $W_0=W_{00}\sqcup W_{0A}$, where $W_{00}$ is the set of gluing points of
$\mathcal{Z}_{x_0}$ with the components $A_j$ and $W_{0A}$ is the set of gluing points between the components
$A_j$ (thus determining the curve $A_0$). We have similarly a set $W_\infty=W_{\infty i}\sqcup W_{\infty A}$.
Although
we know that  $W_0$ and $W_\infty$ have the same cardinality, we do not know
that  the sets $W_{00}$ and $W_{\infty i}$ have the same cardinality, and neither that the curves $A_0$ and $A_\infty$ have the same topology.
To circumvent this problem, we will use the following
\begin{lemm}\label{lepourrecoler}  Let $Y$ be a   complex projective manifold and let
$C,\,A_j$ be smooth curves  in $Y$  meeting transversally in distinct points
$z_1,\ldots,\,z_M$.
Then for a smooth complete intersection curve
$R$ meeting the $A_j$ and $C$ in  sufficiently many points, and for any two subsets
$\{z_{i_1},\ldots,\,z_{i_N}\}$,  $\{z_{j_1},\ldots,\,z_{j_N}\}$ of $N$ points,
there exists
a family of stable maps
\begin{eqnarray}\label{eqfamiliyporpreuve} m: \mathcal{C}\rightarrow X,\,\,\psi: \mathcal{C}\rightarrow  D
\end{eqnarray}
parameterized by a smooth connected quasiprojective curve $D$, and two points $0_1,\,0_2\in D$
with the following properties:

(i)  the stable curve
$$m_{0_1}:   \mathcal{C}_{0_1}\rightarrow X$$
over $ 0_1$ is the normalization of $C\cup R\cup_i A_i$ at the points $z_{i_1},\ldots,\,z_{i_N}$, and the stable curve
$$m_{0_2}:   \mathcal{C}_{0_2}\rightarrow X$$
over $ 0_2$ is the normalization of $C\cup R\cup_i A_i$ at the points $z_{j_1},\ldots,\,z_{j_N}$;

(ii) the family of curves (\ref{eqfamiliyporpreuve}) has trivial Abel-Jacobi map.
\end{lemm}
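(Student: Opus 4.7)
My plan is as follows. The key observation is that condition (ii) (Abel--Jacobi triviality) will hold automatically as soon as the image of $m:\mathcal{C}\to Y$ is contained in the fixed $1$-dimensional subvariety $X_0:=C\cup R\cup\bigcup_j A_j$ throughout the family. Indeed, the induced cylinder map $[\mathcal{Z}]_*:H_1(D,\mathbb{Z})\to H_3(Y,\mathbb{Z})$ sends a class $\alpha\in H_1(D,\mathbb{Z})$ to the homology class of a $3$-chain on $Y$ obtained by sweeping a fiber of $\mathcal{C}\to D$ over a representative of $\alpha$ and pushing forward by $m$. When the image stays in $X_0$, this $3$-chain is supported on the real $2$-dimensional set $X_0$, so it represents the zero class because $H_3(X_0,\mathbb{Z})=0$. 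Thus the whole difficulty is to construct the family $\mathcal{C}\to D$ with image in $X_0$ and with the prescribed fibers at $0_1$ and $0_2$.

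For the construction, I first reduce to the case of an \emph{elementary swap}, where the two $N$-subsets $\{z_{i_k}\}$ and $\{z_{j_k}\}$ differ by a single element, say $z_{i_1}\neq z_{j_1}$ and $z_{i_k}=z_{j_k}$ for $k\geq 2$. The general case follows by concatenating families realizing successive elementary swaps and smoothing the resulting reducible base after compatibility of the gluing fibers is verified. For the elementary swap itself, I represent each of the two partial normalizations as a stable map to $X_0$ in which the node being normalized is replaced by a contracted rational bubble with two attaching points on the appropriate branches of $C$ and $A_j$ meeting at that node. The interpolation is then achieved by transporting the normalizing bubble from $z_{i_1}$ to $z_{j_1}$ along $X_0$, using the auxiliary curve $R$ as a bridge: since $R$ meets both $C$ and $A_j$ in many points, one constructs an intermediate $1$-parameter family of stable maps in which additional collapsed rational bubbles are inserted and removed at successive nodes in $R\cap C$ and $R\cap A_j$, effectively carrying the normalizing bubble from $z_{i_1}$ along $R$ to $z_{j_1}$. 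Throughout this family, each fiber is a stable map whose image in $Y$ remains equal to $X_0$, so (ii) holds automatically by the previous paragraph.

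The main obstacle is the combinatorial and deformation-theoretic verification that this family can be realized as a genuine flat family over a \emph{smooth connected} curve $D$. One must specify a reducible nodal curve $D_0$ whose components parametrize the successive elementary modifications along $R$ and whose nodes correspond to the matched bubble configurations at transitional stable maps, then produce a smoothing $D\rightsquigarrow D_0$ preserving the two distinguished points $0_1,0_2$. The requirement that $R$ meet $C$ and the $A_j$ in \emph{sufficiently many} points is essential here: it ensures that the dual graphs of the intermediate stable maps admit the elementary moves needed to pass from one configuration to the next, so that the corresponding stratum in the moduli space of stable maps to $Y$ with image $X_0$ is connected and unobstructed along the relevant path. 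The flatness of the resulting family is automatic from the semistability of all intermediate fibers, and connectedness/smoothness of $D$ is obtained by taking an adequate smoothing of $D_0$.
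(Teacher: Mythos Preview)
Your observation that (ii) follows once the image of $m$ stays in the fixed curve $X_0=C\cup R\cup\bigcup_j A_j$ is correct: the image cycle is then constant and the Abel--Jacobi map vanishes identically. The gap is that no such family over a \emph{smooth connected} curve $D$ can have two distinct partial normalizations of $X_0$ as fibers. The locus in $D$ where $\mathcal{C}_t$ has a contracted component is closed (it is the image, under the proper map $\mathcal{C}\to D$, of the closed set where $\mathcal{C}\to D\times Y$ has positive-dimensional fiber); since the fibers at $0_1,0_2$ have no contracted component, the complement $U\subset D$ is open, dense and connected. For $t\in U$ the map $\mathcal{C}_t\to X_0$ is finite birational, hence a partial normalization of $X_0$ at exactly $N$ nodes, and the subset of nodes being normalized is a discrete invariant, locally constant on $U$, hence constant on the connected set $U$. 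Thus the fibers at $0_1$ and $0_2$ would have to coincide. Your contracted two-pointed bubbles are moreover not stable, and once you smooth the nodal base $D_0$ there is no reason for the image of the resulting family to remain in $X_0$, which undoes your argument for (ii).

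The paper's argument is entirely different and does \emph{not} keep the image fixed. One embeds $C$ and the $A_i$ in a smooth complete-intersection surface $S\subset Y$ and takes $R\subset S$ ample enough that, for any $N$-tuple $w_\cdot$ of points of $S$, the curves in the linear system $|C+\sum_i A_i+R|$ on $S$ with ordinary nodes at the $w_k$ form a projective space of the expected dimension. Choosing a curve $D\subset S^{[N]}$ through the two given $N$-tuples $\{z_{i_k}\}$ and $\{z_{j_k}\}$, together with a section of this projective bundle over $D$ passing through $C\cup R\cup\bigcup_i A_i$ at both endpoints, one obtains a family of nodal curves in $S$ with $N$ marked nodes; normalizing at these nodes yields the required family of stable maps. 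Property (ii) holds because all image curves lie in a single linear system on $S$, hence are rationally equivalent in $Y$. The role of $R$ is to make the linear system large enough to impose nodes freely, not to serve as a combinatorial bridge for bubbles.
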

\begin{rema} {\rm The curve $R$ is necessary in this statement, as it adds to connectivity of the curves. Lemma \ref{lepourrecoler} is wrong without it, for topological reasons. For example, consider the union $C$ of three curves $C_1,\,C_2,\,C_3$ isomorphic to
 $\mathbb{P}^1$, and glued as follows: $C_2$ is glued to $C_1$ in two points $x,\,y$, and $C_3$ is glued to $C_1$ in two  points
 $z,\,w$. Then the curves obtained by normalizing $C$ in $x,\,y$ and  $x,\,z$  are  not  deformations of each other, since one is disconnected, not the other. }
\end{rema}
\begin{proof} [Proof of Lemma \ref{lepourrecoler}] We first choose a smooth surface $S\subset Y$, which is a complete intersection of ample hypersurfaces, and which contains
the curves $C$ and $A_i$. The curve $R$ will be any
sufficiently ample curve in $S$ meeting  $C$ and $A_i$ transversally. We choose $R$ ample enough so that
for any set $\{w_1,\ldots,w_N\} $ of $N$ points in $S$, the set of curves in
the linear system $|C+\sum_iA_i+R|$  which are  singular at all  the $w_i$'s  and have  ordinary quadratic singularities
is a Zariski open set in a projective space $\mathbb{P}_{w_{\cdot}}$ of the expected dimension.
 We next choose a curve $ \overline{D }$ in $S^{[N]} $ passing through the
two sets  $\{z_{i_1},\ldots,\,z_{i_N}\}$,  $\{z_{i_1},\ldots,\,z_{j_N}\}$ at points $0_1$, $0_2$. There exists a Zariski open set
$D\subset  \overline{D }$, and a section of the projective bundle above over $D$ passing over $0_1$ and $0_2$
through the curve  $C\cup R\cup_i A_i$. This provides a family of curves $\mathcal{C}_0$  parameterized by $D$, equipped with a choice
of $N$ singular
points, which are ordinary quadratic. The desired family is obtained by normalizing the curves of the family $\mathcal{C}_0$ at these
$N$ points.
\end{proof}

Let $n_{00}:=|W_{00}|$, $n_{\infty i}:=|W_{\infty i}|$. We can assume that $n_{00}\geq n_{\infty i}$.
Let $W$ be the set of  gluing  points in
 the stable curve $ {p''}^{-1}(0)=\mathcal{Z}_{x_0}\cup A_0$ mapping to $ X$ via $f''_0$ (see  (\ref{eqstablefiber}), (\ref{eqnew2504} )).
 Let
$N:=|W|$. Lemma \ref{lepourrecoler} says that, after gluing  a complete intersection curve $R$, we can deform  the stable map
$$f''_{0R}: \mathcal{Z}_{x_0}\cup_{W_{00}} A_0\cup R\rightarrow X, $$
obtained by gluing to $f''_0$ the inclusion of $R$,
 to any other stable map
$$f'''_{0R}: \mathcal{Z}_{x_0}\cup A'_0\cup R\rightarrow X,$$
inducing the same map on normalizations, but factoring through a different gluing of the components contained in $A_0$ or
$\mathcal{Z}_{x_0}$, assuming the total  number of identification points are the same. Furthermore, according to the lemma, this deformation can be done via a family of curves with trivial Abel-Jacobi map.
As we assumed that $n_{00}\geq n_{\infty i}$, we can  choose $A'_0=A_\infty$ glued by $n_{\infty i}$ points to $\mathcal{Z}_{x_0}$.

Looking at the proof of Lemma \ref{lepourrecoler} and using the same notation, we see that we can arrange
that the same surface $S$ also contains the curve  $\mathcal{Z}_{x_i}$ and then, because
$\mathcal{Z}_{x_0}$ and $\mathcal{Z}_{x_i}$ are algebraically equivalent, the curve $R$, which can be taken the same for
$\mathcal{Z}_{x_0}$ and $\mathcal{Z}_{x_i}$, meets $\mathcal{Z}_{x_0}$ and $\mathcal{Z}_{x_i}$ in the same number
 of points.

We now have three families of semistable curves:

(1) The original family
$\mathcal{Z}\rightarrow C,\,\mathcal{Z}\rightarrow X$ with respective  fibers $\mathcal{Z}_{x_0},\,\mathcal{Z}_{x_i}$ over $x_0,\,x_i$.

(2) The family  $f'':\Sigma''\rightarrow X,\,\,p'':\Sigma''\rightarrow \mathbb{P}^1$ of  (\ref{eqderderpour**}),
with respective  fibers   $\mathcal{Z}_{x_0}\cup A_0,\,\mathcal{Z}_{x_i}\cup A_\infty$ over $0,\,\infty$.

(3)  The family $m:\mathcal{C}\rightarrow X,\,\psi:\mathcal{C}\rightarrow D$ given by Lemma \ref{lepourrecoler} and the arguments above, with fibers
$\mathcal{Z}_{x_0}\cup A_0\cup R$,  $\mathcal{Z}_{x_0}\cup A_\infty \cup R$,

 where in (3), the family has trivial Abel-Jacobi map, and the number of attachment points of $\mathcal{Z}_{x_0}$
with $ A_\infty$  is  the same as   the number of attachment points of $\mathcal{Z}_{x_i}$
with $ A_\infty$, and furthermore the curve $R$ has the same number of points of attachment with  $\mathcal{Z}_{x_0}$ and
$\mathcal{Z}_{x_i}$.

The following lemma  will allow us (after changing $R$ if necessary)   to replace the family (1) by a family over $C$ with same Abel-Jacobi map and
 fibers $\mathcal{Z}_{x_0}\cup R\cup A_\infty,\,\mathcal{Z}_{x_i}\cup R\cup A_\infty$ and the family (2) by  a family
 parameterized by
 $ \mathbb{P}^1$,
with fibers   $\mathcal{Z}_{x_0}\cup R\cup A_0,\,\mathcal{Z}_{x_i}\cup R\cup A_\infty$.

\begin{lemm}\label{elemmefinal} Let
\begin{eqnarray}\label{eleqorigfam} f:\Sigma\rightarrow X,\,p: \Sigma\rightarrow C\end{eqnarray}
 be a family of semistable curves generically embedded in $X$
and  parameterized by
a quasiprojective smooth curve $C$. Let $x,\,y$ be two points of
$C$ and let  $S$ be a curve in $X$ meeting both curves  $f(\Sigma_x)$ and $f(\Sigma_y)$ transversally  in $M$ smooth points.
Then, up to replacing $S$ by a union $S'=S\cup S_1$, where $S_1$ is a complete intersection curve,  there exists
a family of stable maps
\begin{eqnarray}\label{eqfailynsprime}
f_{S'}:\Sigma_{S'}\rightarrow X,\,p_{S'}: \Sigma_{S'}\rightarrow C',
\end{eqnarray} where $C'\subset C$ is a Zariski dense open set of $C$ containing $x$ and $y$, with the same Abel-Jacobi map
as  (\ref{eleqorigfam}), and
with the following properties

(i) The curve $S_1$ meets $f(\Sigma_x)$ and $f(\Sigma_y)$ transversally  in $M'$ points.

(ii) The fibers of the family (\ref{eqfailynsprime}) over $x$ and $y$  are respectively
$ \Sigma_x\cup S \cup S_1$ and $\Sigma_y \cup S \cup S_1$ mapped to $X$ via
$f_x$, resp. $f_y$ on the first term.

\end{lemm}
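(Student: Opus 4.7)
My plan is to realize $\Sigma_{S'}$ by attaching to the original family $\Sigma\to C$ a second family over $C'$ whose fibers are divisors in a linear system on a well-chosen smooth surface $T\subset X$ passing through $S\cup S_1$.

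First, I construct $T$ and $S_1$ together. Using the freedom to take $S_1$ as a general complete intersection curve, I arrange by Bertini (imposing the needed incidence points on $f(\Sigma_x), f(\Sigma_y)$) that $S_1$ meets each of $f(\Sigma_x)$ and $f(\Sigma_y)$ transversally in $M'$ points, which gives property~(i). Simultaneously I ask that $S\cup S_1$ lie in a smooth surface $T\subset X$ containing $f(\Sigma_x)$ and $f(\Sigma_y)$ and meeting $f(\Sigma_t)$ transversally for $t$ in a Zariski open neighborhood $C'\subset C$ containing $x$ and $y$. If no such $T$ exists in $X$ directly, one first blows up $X$ along a smooth auxiliary curve through $S\cup f(\Sigma_x)\cup f(\Sigma_y)$ to create the needed surface; the map to $X$ is recovered by postcomposition with the contraction.

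Second, on $T$ the linear system $|S+S_1|_T$ is positive-dimensional for $S_1$ sufficiently ample. I choose a morphism $\pi:C'\to |S+S_1|_T$ with $\pi(x)=\pi(y)=[S+S_1]$ such that for generic $t\in C'$ the divisor $D_t:=\pi(t)$ meets $f(\Sigma_t)\cap T$ transversally in $[S+S_1]\cdot [f(\Sigma_t)|_T]=M+M'$ points. Let $\mathcal{D}\subset C'\times T\subset C'\times X$ denote the universal divisor. I then form $\Sigma_{S'}:=\Sigma\cup_\Gamma\mathcal{D}$, gluing along the identification locus $\Gamma=\{(\sigma,(p(\sigma),f(\sigma)))\mid \sigma\in f^{-1}(\mathcal{D})\}$, which is finite flat of degree $M+M'$ over $C'$ by constancy of the intersection number. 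The projection $p_{S'}:\Sigma_{S'}\to C'$ is then a flat semistable family of nodal curves of constant arithmetic genus, and $f_{S'}:\Sigma_{S'}\to X$ is induced by $f$ on $\Sigma$ and by the projection $\mathcal{D}\to X$. Since $D_x=D_y=S+S_1$, the fibers over $x$ and $y$ are $\Sigma_x\cup S\cup S_1$ and $\Sigma_y\cup S\cup S_1$ respectively, giving~(ii).

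For the Abel-Jacobi map, the cycle class of $\Sigma_{S'}$ in $C'\times X$ differs from that of $\Sigma$ by $[\mathcal{D}]$. All members $D_t$ of the pencil are linearly equivalent on $T$, and hence rationally equivalent in $X$, so the map $t\mapsto [D_t]\in \mathrm{CH}_1(X)$ is constant; equivalently $\mathcal{D}-C'\times(S+S_1)$ is a cycle rationally equivalent to zero on each fiber $\{t\}\times X$. Its contribution to the Abel-Jacobi map is therefore constant, and the Abel-Jacobi maps of $(f_{S'},p_{S'})$ and $(f,p)$ agree as morphisms $C'\to J^{2n-3}(X)$.

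The hardest step is the first one: constructing a smooth surface $T$ simultaneously containing $S, S_1, f(\Sigma_x), f(\Sigma_y)$ with the right transversality against $f(\Sigma_t)$ for generic $t$. In general no such surface exists in $X$ directly, so the construction requires blowing up $X$ along suitable auxiliary smooth curves while keeping track of all the incidence and smoothness conditions. The complete intersection freedom for $S_1$ is precisely what allows us to adjust the situation flexibly enough to realize this, and to ensure the transverse meeting of $S_1$ with $f(\Sigma_x), f(\Sigma_y)$ demanded by~(i).
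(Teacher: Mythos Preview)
Your overall architecture---attach to each fiber $\Sigma_t$ a curve moving in a linear system $|S+S_1|$ on a fixed smooth surface $T\subset X$, with the curve specializing to $S\cup S_1$ at $t=x,y$---is exactly the paper's approach, and your Abel-Jacobi argument is correct. But the construction as written has a genuine gap in the flatness of the gluing locus, and a related overcomplication in the choice of $T$.

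The gap is this: you form $\Gamma$ as the locus where $f(\Sigma_t)$ meets $D_t=\pi(t)$, and assert it is finite flat of degree $M+M'$ over $C'$. But $D_t$ lies in the surface $T$, while for generic $t$ the curve $f(\Sigma_t)$ only meets $T$ transversally in a finite set of points; a curve $D_t\subset T$ chosen merely via a morphism $\pi:C'\to |S+S_1|_T$ with $\pi(x)=\pi(y)=[S+S_1]$ has no reason to pass through any of those points. In a threefold two curves generically do not meet at all, so over generic $t$ your $\Gamma$ is empty, not of degree $M+M'$. The intersection number $[S+S_1]\cdot[f(\Sigma_t)|_T]$ you invoke is only defined when $f(\Sigma_t)\subset T$, which fails for $t\neq x,y$. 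The paper fixes this by choosing, for each $t$, the curve $S_t\in|S+S_1|$ specifically to \emph{contain} the finite set $f(\Sigma_t)\cap T$; this is possible once $S_1$ is ample enough on $T$, and it is what makes the gluing locus have constant degree equal to the intersection number $[T]\cdot[f(\Sigma_t)]$ in $X$.

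Relatedly, you do not need $T$ to contain $f(\Sigma_x)$ and $f(\Sigma_y)$; this is what forces you into the awkward blow-up detour. The paper only asks $T$ to contain $S$ and to meet $f(\Sigma_x),\,f(\Sigma_y)$ (hence the general fiber) transversally. One then chooses $S_1\subset T$ ample enough, with normal crossings with $S$, and passing through the points of $f(\Sigma_x)\cap T$ and $f(\Sigma_y)\cap T$ not already on $S$; this guarantees that at $t=x,y$ the reducible curve $S\cup S_1$ does contain the full intersection $f(\Sigma_{x})\cap T$ (resp.\ $f(\Sigma_{y})\cap T$), so the gluing is consistent with the generic picture.
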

\begin{proof}  We would like to attach the curve $S$ to the other fibers but  it does not meet a priori the other fibers, so we need to vary the curve $S$. Let us assume for simplicity that ${\rm dim}\,X=3$. We choose a  smooth surface
$T\subset  X$ containing $S$ and meeting  the curves $ f(\Sigma_x), \, f(\Sigma_y)$  (hence the general  fiber $f(\Sigma_t)$)
transversally. We  choose the curve $S_1$ in $T$ in such a way that $S\cup S_1$ has normal crossings,  is ample enough
and $S_1$ contains the  intersection points in  $f(\Sigma_x)\cap T$, and $f(\Sigma_y)\cap T$ which are not on $S$.
For a generic $t\in C$, we choose a curve
$S_t$ in the linear system $S\cup S_1$ containing the intersection $f(\Sigma_t)\cap T$, in such a way
that $S_x=S_y=S\cup S_1$. Gluing $S_t$ to $\Sigma_t$ at all the  intersection  points  $f(\Sigma_t)\cap T$ provides the desired family.
\end{proof}
Lemma \ref{elemmefinal} concludes the proof of Claim \ref{claimmod} once one observes from the proof that the same curve $S_1$
can be used for the families (1), (2), (3) above,
providing modified families of semistable curves with fibers

(1)'  $\mathcal{Z}_{x_0}\cup A_\infty\cup R\cup S_1,\,\mathcal{Z}_{x_i} \cup A_\infty\cup R\cup S_1$

(2)'  $\mathcal{Z}_{x_0}\cup A_0 \cup R\cup S_1,\,\mathcal{Z}_{x_i}\cup A_\infty\cup R\cup S_1$

(3)'  $\mathcal{Z}_{x_0}\cup A_0\cup R\cup S_1$,  $\mathcal{Z}_{x_0}\cup A_\infty \cup R\cup S_1$.

These three families, the first of which has the same Abel-Jacobi as the original family $\mathcal{Z}\rightarrow C,\,\mathcal{Z}\rightarrow X$, while the two others have trivial Abel-Jacobi map, provide the desired chain.
\end{proof}

\subsection{The case of rationally connected manifolds\label{secfinal}}
The second step in the proof of Theorem \ref{theomain} is the following statement which is valid in any dimension but concerns only
rationally connected projective manifolds.
\begin{theo}\label{theocylRC}  Let $X$ be rationally connected smooth projective of dimension $n$ over $\mathbb{C}$.
Then
\begin{eqnarray} N_{1,{\rm cyl,st}}H^{2n-3}(X,\mathbb{Z})=\widetilde{N}_{1,{\rm cyl}}H^{2n-3}(X,\mathbb{Z})=\widetilde{N}^{n-2}H^{2n-3}(X,\mathbb{Z}).
\end{eqnarray}
\end{theo}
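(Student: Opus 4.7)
The rightmost equality $\widetilde{N}_{1,\mathrm{cyl}}H^{2n-3}(X,\mathbb{Z}) = \widetilde{N}^{n-2}H^{2n-3}(X,\mathbb{Z})$ is exactly Proposition \ref{procylegalstrong} applied with $c = n-1$ (so that $2c-1 = 2n-3$), and does not require rational connectedness. The inclusion $\widetilde{N}_{1,\mathrm{cyl}}H^{2n-3}(X,\mathbb{Z}) \subset N_{1,\mathrm{cyl,st}}H^{2n-3}(X,\mathbb{Z})$ follows, as in the discussion preceding (\ref{eqchainofincl}), by representing a smooth-base correspondence $\Gamma \in \mathrm{CH}^{n-1}(Z \times X)$ (with $Z$ a smooth curve, by Lemma \ref{lequisert}) as a linear combination of subvarieties and then applying semi-stable reduction to the projection to $Z$: the resulting family of semi-stable maps computes the same cylinder class. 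The real content is therefore the reverse inclusion $N_{1,\mathrm{cyl,st}}H^{2n-3}(X,\mathbb{Z}) \subset \widetilde{N}_{1,\mathrm{cyl}}H^{2n-3}(X,\mathbb{Z})$, which is where rational connectedness of $X$ is essential.

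My plan is the following. Start with a class $\alpha = f_* p^*(\beta)$, with $\beta \in H_1(Z,\mathbb{Z})$, $p\colon Y \to Z$ flat projective semi-stable of relative dimension $1$, $f\colon Y \to X$, and $\dim Z \leq 1$. Splitting $Z$ into components and, if necessary, applying a further semi-stable resolution of the map, we may reduce to $Z$ being a reduced projective curve whose only singularities are nodes. Let $\nu\colon \widetilde{Z} \to Z$ be the normalization. Pulling the family back, one gets $\widetilde{p}\colon \widetilde{Y} \to \widetilde{Z}$, $\widetilde{f}\colon \widetilde{Y} \to X$ (with $\widetilde{Y}$ resolved if needed), and this already realizes the image of $\nu_* H_1(\widetilde{Z},\mathbb{Z}) \subset H_1(Z,\mathbb{Z})$ under the cylinder map as a class in $\widetilde{N}_{1,\mathrm{cyl}}H^{2n-3}(X,\mathbb{Z})$. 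The cokernel of $\nu_*$ is generated by node loops $\gamma_z$, one per node $z \in Z$ with preimages $z_1,z_2 \in \widetilde{Z}$. It suffices to prove that each class $\eta_z := f_* p^*(\gamma_z)$ lies in $\widetilde{N}_{1,\mathrm{cyl}}H^{2n-3}(X,\mathbb{Z})$.

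Geometrically, $\eta_z$ is represented by the $3$-cycle $\widetilde{f}(\widetilde{p}^{-1}(\widetilde{\gamma}_z)) - \Gamma_z$, where $\widetilde{\gamma}_z$ is an arc in $\widetilde{Z}$ from $z_1$ to $z_2$ and $\Gamma_z$ is any $3$-chain in $X$ bounding $\widetilde{f}_*[\widetilde{Y}_{z_1}] - \widetilde{f}_*[\widetilde{Y}_{z_2}]$; both fibers push forward to the same $1$-cycle $f_*[Y_z]$ in $X$, so the resulting $3$-chain is closed. To realize $\eta_z$ by a smooth base, I use rational connectedness through the Koll\'ar--Miyaoka--Mori theory of stable maps (already invoked in \cite{Komimo} in the definition of $N_{1,\mathrm{cyl,st}}$): attach sufficiently many free rational tails to $Y_z$ to obtain an enriched semi-stable map that is smoothable, i.e.\ the special fibre of a flat family with smooth generic fibre over a smooth base. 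Attaching rational tails leaves $\eta_z$ unchanged because a $\mathbb{P}^1$-component has no $H^1$ and contributes zero to the cylinder map. The resulting one-parameter family over a smooth curve $C$, together with a well-chosen loop in $C$ whose associated $3$-chain is homologous to the original $\widetilde{f}(\widetilde{p}^{-1}(\widetilde{\gamma}_z))$, exhibits $\eta_z$ in $\widetilde{N}_{1,\mathrm{cyl}}H^{2n-3}(X,\mathbb{Z})$.

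The main obstacle will be the last step: ensuring that the smoothing produced from the attached rational tails realises \emph{precisely} the class $\eta_z$ and not merely a class in the same coset modulo $\widetilde{N}_{1,\mathrm{cyl}}$. This will require tracking the $3$-chain through the smoothing deformation and exploiting the fact that families of stable maps obtained by attaching rational tails have trivial Abel-Jacobi contribution, a role analogous to that played by condition (iv) of Claim \ref{claimmod} in the proof of Theorem \ref{theocylinder}. A cleaner alternative, which I expect to be the actual route, is to mimic Claim \ref{claimmod} directly: build, starting from $p\colon Y\to Z$, a finite chain of smooth-base semi-stable families glued along common semi-stable fibres that reassembles $\alpha$, using rational connectedness to provide, at each gluing, the required connecting families of stable maps with trivial Abel-Jacobi map.
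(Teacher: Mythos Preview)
You correctly identify the key ingredient: rational connectedness lets one glue very free rational tails to the fibers $Y_z$ to make them unobstructed as stable maps, without altering the cylinder homomorphism (rational components carry no $H_1$). The second equality via Proposition \ref{procylegalstrong} is also right.

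Your execution, however, leaves precisely the gap you flag as the ``main obstacle'': after making $Y_z$ smoothable you still need a smooth \emph{curve} $C$ carrying a \emph{loop} whose cylinder image is $\eta_z$, and a one-parameter smoothing gives only an arc. The paper sidesteps this by never trying to build a smooth curve. After the gluing step (first complete intersection curves meeting the fibers in an ample hypersurface, then very free rational tails attached to those) makes the fibers over ${\rm Sing}\,Z$, and hence over a general point, unobstructed and automorphism-free, the moduli space $Z'$ of stable maps is smooth at these points, and there is a classifying morphism $j'\colon Z^0 \to Z'$ defined on a Zariski open $Z^0\subset Z$ containing ${\rm Sing}\,Z$. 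Since $Z\setminus Z^0$ consists of smooth points of $Z$, the map $H_1(Z^0,\mathbb{Z})\to H_1(Z,\mathbb{Z})$ is surjective, and the cylinder map from $Z$ therefore factors through the cylinder map from the smooth (possibly high-dimensional) base $Z'$. No node-by-node analysis, no closing of arcs into loops, and no reduction of $Z$ to nodal form are needed; in particular your suggested alternative via Claim \ref{claimmod} is unnecessary and somewhat off-target, since that claim addresses a different issue, namely the Abel--Jacobi defect between $N^{n-2}$ and $\widetilde{N}^{n-2}$ in the proof of Theorem \ref{theocylinder}.
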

\begin{proof} The second equality is proved in Proposition \ref{procylegalstrong}. Let $Z$ be a connected reduced  curve   with a family
$$p:Y\rightarrow Z,\,f:Y\rightarrow X,$$
of semistable curves in $X$, that is, $p$ is flat projective of relative dimension $1$ and the fibers of $p$ are semistable curves. We can also assume these curves are imbedded in $X$, so the maps are stable and automorphisms free.
It is enough  to prove that the image of
$$f_*\circ p^*: H_1(Z,\mathbb{Z})\rightarrow H_3(X,\mathbb{Z})$$
is contained in $\widetilde{N}_{1,{\rm cyl,st}}H^{2n-3}(X,\mathbb{Z})$, that is, there exists
a {\it smooth} (but not necessarily projective) variety $Z'$ and a
family of stable curves
$$p':Y'\rightarrow Z',\,f':Y'\rightarrow X,$$
with $${\rm Im}\,f_*\circ p^*\subset {\rm Im}\,( f'_*\circ {p'}^*: H_1(Z',\mathbb{Z})\rightarrow H_3(X,\mathbb{Z})).$$
Assume first  that the following holds :

(*)  {\it  At each singular point of $Z$, the
semistable map
$f_z:Y_z:=p^{-1}(z)\rightarrow X$ is stable, automorphism free,  and has unobstructed  deformations. }

Then
we take for $Y'\rightarrow Z'$  the
universal deformation of  the general fiber $f_z$, or rather, its restriction to the Zariski open set $Z'$ of the base consisting of smooth points, that is, unobstructed  stable maps, which furthermore are automorphism free.
By our assumption, there is a dense Zariski open set $Z^0\stackrel{j}{\hookrightarrow}Z$
such that  $Z\setminus Z^0$ consists of smooth points of $Z$, and
$Z^0$ maps to $ Z'$ via the classifying map $j'$. We thus  have two  commutative (in fact Cartesian) diagrams
 $$\begin{matrix}
X& \stackrel{f}\leftarrow& Y&\supseteq &Y^0&\stackrel{j''}{\rightarrow} & Y'&\stackrel{f'}{\rightarrow}& X
 \\
&&p\downarrow& &p^0\downarrow& &p'\downarrow& &
\\
&& Z&\stackrel{j}{\hookleftarrow}& Z^0& \stackrel{j'}{\rightarrow}&Z'&&
\end{matrix}
,$$
where $Y^0=p^{-1}(Z^0)$
and $$f'\circ j''=f^0,\,\,f^0:=f_{\mid Y^0}.$$
We deduce from this diagram that  the two maps
$$f'_*\circ {p'}^*:H_1(Z',\mathbb{Z})\rightarrow H_3(X,\mathbb{Z}) ,\,\, f_*\circ p^*:H_1(Z,\mathbb{Z})\rightarrow H_3(X,\mathbb{Z})$$
coincide on $H_1(Z^0,\mathbb{Z})$ which maps to both via the  maps
$$j_*:H_1(Z^0,\mathbb{Z})\rightarrow H_1(Z,\mathbb{Z}),\,j'_*:H_1(Z^0,\mathbb{Z})\rightarrow H_1(Z',\mathbb{Z})$$ induced respectively  by  the morphisms
$$ j: Z^0\hookrightarrow Z,\,\,j': Z^0\rightarrow Z'.$$
As $Z\setminus Z^0$ consists  of smooth points of
$Z$, the map
$j_*: H_1(Z^0,\mathbb{Z})\rightarrow H_1(Z,\mathbb{Z})$
is surjective, and we conclude that
$${\rm Im}\,f_*\circ p^*={\rm Im}\,f^0_*\circ {p^0}^*\subset {\rm Im}\, f'_*\circ {p'}^*,$$
 and this finishes the proof since $Z'$ is smooth.

It remains to show that we can achieve (*). This is proved in the following lemma.
First of all, we observe that after  replacing $X$ by $X\times \mathbb{P}^r$, which does not change $H_3$ since, by rational connectedness,   $H_1(X,\mathbb{Z})=0$, we can assume the map
$f_z:Y_z\rightarrow X$ to be an embedding for all $z\in Z$. In particular all maps  are stable. Then we have
\begin{lemm} Let $p: Y\rightarrow Z,\,f:Y\rightarrow X$ be a family of semi-stable curves imbedded in $X$, parameterized by a reduced curve $Z$.  There exist a Zariski open set $ Z^0\stackrel{j}{\hookrightarrow}  Z$ such that $Z\setminus Z^0$ consists of smooth points of $Z$, and a
family $\tilde{p}^0:\widetilde{Y}^0\rightarrow Z^0,\,\tilde{f}^0:\widetilde{Y}^0\rightarrow X$ of semistable curves in $X$ parameterized by $Z^0$
such that

(i) the fibers $\tilde{f}_z:\widetilde{Y}^0_z\rightarrow X$ are stable maps with unobstructed deformations;

(ii)
the cylinder map
\begin{eqnarray}\label{eqcylmappourpreuvelemme} \tilde{f}^0_*\circ (\tilde{p}^0)^*:H_1(Z^0,\mathbb{Z})\rightarrow H_3(X,\mathbb{Z})
\end{eqnarray}
coincides with the composition  $f_*\circ p^*\circ j_*$.
\end{lemm}
\begin{proof}  We first choose  a general sufficiently ample hypersurface  $W$ in $X$. There exists a Zariski open set
$Z^0_1\subset Z$, which we can assume to contain the singular points of $Z$, such that $W$ meets the fibers of $p$ only in smooth distinct points
$x_i,\,i=1,\ldots,\,N$.
Furthermore attaching to the fibers $Y_z$ a complete intersection curve $C_i$ in $X$  at each  of these intersection points, and restricting again
$Z^0_1$, we can assume (see \cite{hiha}, \cite{Komimo}) that the curves $C_i$ are smooth and disjoint, the curves
$Y_{z,1}=Y_z\cup C_z$ where $C_z:=\cup_i C_i\subset X$ are semistable and
satisfy
\begin{eqnarray}\label{eqvannormal} H^1(Y_{z},(N_{Y_{z,1}/X})_{\mid Y_z})=0.
\end{eqnarray}
The family of curves
\begin{eqnarray}\label{eqfamily1} f_1^0: Y^0_{1}\rightarrow X,\,p^0_1:Y^0_{1}\rightarrow Z^0_1
\end{eqnarray}
so constructed has the same cylinder homomorphism map (\ref{eqcylmappourpreuvelemme}) as the original family, since the part of the cylinder homomorphism
coming from the $C_i$ is easily seen to be trivial.

Unfortunately, the modified family  does not satisfy unobstructedness because the vanishing condition (\ref{eqvannormal}) is satisfied only after restriction to $Y_z$, and not on the whole of $Y_{z,1}$. We use now rational connectedness which allows us to glue  very free rational curves to the components $C_i$.
We first do this over a dense Zariski open set $M^0$
of the parameter space $M$ parameterizing the disjoint union of $N$  complete intersection curves $C_i$.
This construction modifies each curve $D=\cup_iC_i$ into an union $D'=\cup_iC'_i$ of
$C_i$ and very free rational  curves, satisfying the property that
$H^1(D',N_{D'/X})=0$.  Then we consider  the morphism
$$\eta: Z^0_1\rightarrow M,\,z\mapsto C_z$$ appearing in   the previous construction.
We can assume that $M^0$ contains $\eta({\rm Sing}\,Z^0_1)$, so
that, letting $Z^{0}:=\eta^{-1}(M^0)$, we
can construct the family
\begin{eqnarray}\label{eqtildeYZX} \tilde{p}^0: \widetilde{Y}^{0}\rightarrow Z^0,\,\tilde{f}^0:\widetilde{Y}^{0}\rightarrow X
\end{eqnarray} by gluing
to the curves $Y_{z}$ the curves $C'_i$ instead of $C_i$.  The cylinder homomorphism map for the family (\ref{eqtildeYZX}) is the same as the cylinder
homomorphism map (\ref{eqcylmappourpreuvelemme}) for the  family (\ref{eqfamily1}) because the extra part  coming from the
rational legs has its cylinder map factoring through the cylinder map associated to the family of
 curves $D'$ over $M^0$, which is trivial  since $M^0$ is smooth and  rational.
\end{proof}
 The proof of Theorem \ref{theocylRC} is now complete.
\end{proof}

We can now prove  our main theorem
\begin{theo} \label{theomainvrai} Let $X$ be a rationally connected smooth projective of dimension $n$ over $\mathbb{C}$.
Then
$N^{n-2}H^{2n-3}(X,\mathbb{Z})_{\rm tf}=\widetilde{N}^{n-2}H^{2n-3}(X,\mathbb{Z})_{\rm tf}$.
\end{theo}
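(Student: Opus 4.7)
The plan is that this theorem is essentially a formal combination of two results already proved earlier in the paper, namely Theorem \ref{theocylRC} and Corollary \ref{corocylinderRC}. I would organize the argument around a chain of equalities passing through the stable cylinder filtration $N_{1,{\rm cyl,st}}$, which serves as a bridge between the strong coniveau and the ordinary coniveau.

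The first step is to observe that passing to the torsion-free quotient is well-behaved with respect to subgroup inclusions: if $A\subseteq B$ is an inclusion of abelian groups then $A\cap {\rm Tors}(B)={\rm Tors}(A)$, so the image of $A$ in $B_{\rm tf}$ is canonically $A_{\rm tf}$. Applying this inside $B=H^{2n-3}(X,\mathbb{Z})$, the integral equality of Theorem \ref{theocylRC},
$$N_{1,{\rm cyl,st}}H^{2n-3}(X,\mathbb{Z})=\widetilde{N}^{n-2}H^{2n-3}(X,\mathbb{Z}),$$
descends to the equality of subgroups of $H^{2n-3}(X,\mathbb{Z})_{\rm tf}$:
$$N_{1,{\rm cyl,st}}H^{2n-3}(X,\mathbb{Z})_{\rm tf}=\widetilde{N}^{n-2}H^{2n-3}(X,\mathbb{Z})_{\rm tf}.$$

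The second step is to invoke Corollary \ref{corocylinderRC}, which applies because $X$ is rationally connected and therefore (by Suzuki's Theorem \ref{theosuzuki}) the Walker Abel-Jacobi map $\widetilde{\Phi}_X:{\rm CH}_1(X)_{\rm alg}\to J(N^{n-2}H^{2n-3}(X,\mathbb{Z})_{\rm tf})$ is injective on torsion, verifying the hypothesis of Theorem \ref{theocylinder}. This yields
$$N_{1,{\rm cyl,st}}H^{2n-3}(X,\mathbb{Z})_{\rm tf}=N^{n-2}H^{2n-3}(X,\mathbb{Z})_{\rm tf}.$$
Chaining the two equalities concludes the proof.

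There is no real obstacle left at this stage: all the genuine work has been done in the preceding sections. The hard part, now already behind us, was split between Theorem \ref{theocylinder} (where rational equivalences between $1$-cycles were converted into families of semistable curves via the gluing construction of Claim \ref{claimmod} and Lemma \ref{lepourrecoler}, exploiting the injectivity of the Walker Abel-Jacobi map on torsion) and Theorem \ref{theocylRC} (where the flexibility of attaching very free rational curves in a rationally connected manifold was used to replace a stable cylinder family by one with unobstructed deformations, and hence with smooth parameter space). The present theorem is then just the formal output of combining these two inputs.
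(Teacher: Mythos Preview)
Your proposal is correct and follows essentially the same approach as the paper's own proof: both chain Corollary \ref{corocylinderRC} and Theorem \ref{theocylRC} through the intermediary group $N_{1,{\rm cyl,st}}H^{2n-3}(X,\mathbb{Z})_{\rm tf}$, with the only cosmetic difference being that the paper separates out the final identification $\widetilde{N}_{1,{\rm cyl}}=\widetilde{N}^{n-2}$ via Proposition \ref{procylegalstrong}, whereas you absorb it directly into the statement of Theorem \ref{theocylRC}.
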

 When $n=3$,  one has
 $N^{1}H^{3}(X,\mathbb{Z})=H^{3}(X,\mathbb{Z})$, so  Theorem \ref{theomain} is proved.
\begin{proof}[Proof of Theorem \ref{theomainvrai}] Let $X$ be  smooth projective rationally connected. By Corollary \ref{corocylinderRC}, one has
\begin{eqnarray}
\label{eq1fin}  N^{n-2}H^{2n-3}(X,\mathbb{Z})_{\rm tf}=N_{1,{\rm cyl,st}}H^{2n-3}(X,\mathbb{Z})_{\rm tf}.
\end{eqnarray}
By Theorem
\ref{theocylRC}, one also has
\begin{eqnarray}
\label{eq1finfin} N_{1,{\rm cyl,st}}H^{2n-3}(X,\mathbb{Z})_{\rm tf}=\widetilde{N}_{1,{\rm cyl}}H^{2n-3}(X,\mathbb{Z})_{\rm tf}.
\end{eqnarray}
Equations (\ref{eq1fin}) and (\ref{eq1finfin}) imply that
$N^{n-2}H^{2-3}(X,\mathbb{Z})_{\rm tf}= \widetilde{N}_{1,{\rm cyl}}H^{2n-3}(X,\mathbb{Z})_{\rm tf}$, where the last group is also  equal to
$\widetilde{N}^{n-2}H^{2n-3}(X,\mathbb{Z})_{\rm tf}$ by Proposition \ref{procylegalstrong}.
The result is proved.
\end{proof}

\section{Complements  and final comments \label{seccomments}}
The following important
  questions concerning the (strong or cylinder) coniveau for rationally connected manifolds remain completely open starting from dimension $4$.
As we already mentioned in the case of dimension $3$, it follows from the results of \cite{CTV} that  for a rationally connected complex projective manifold $X$, one has
$$N^1H^k(X,\mathbb{Z})=H^k(X,\mathbb{Z})$$
for any $k>0$. Indeed, the quotient $H^k(X,\mathbb{Z})/N^1H^k(X,\mathbb{Z})$ is of torsion because $X$ has a decomposition of the diagonal with
$\mathbb{Q}$-coefficients, and on the other hand, when $X$ is smooth quasiprojective,  $H^k(X,\mathbb{Z})/N^1H^k(X,\mathbb{Z})$ is  torsion free by
\cite{CTV}.
\begin{question} \label{qmain1} Let $X$ be a rationally connected complex projective manifold of dimension $n$. Is it true that
$$\widetilde{N}^{1}H^k(X,\mathbb{Z})=H^k(X,\mathbb{Z})$$
for $k>0$?
\end{question}
Of course, this  question is open only starting from $k=3$. Our main result solves this question when ${\rm dim}\,X=3$ and for the cohomology modulo torsion. In dimension $3$, it leaves open the question, also mentioned in  \cite{BeOt}, whether for a   rationally connected  threefold $X$, we have the equality  $H^3(X,\mathbb{Z})=\widetilde{N}^1H^3(X,\mathbb{Z})$.

\begin{question} \label{qmain2} Let $X$ be a rationally connected complex projective manifold of dimension $n$. Is it true that
$$N_{1,{\rm cyl}}H^k(X,\mathbb{Z})=H^k(X,\mathbb{Z})$$
for $k<2n$?
\end{question}
These  questions are not unrelated, due to the results of Section \ref{secmaterial}. For example, in degree $k=3$, a positive answer to Question
\ref{qmain1} even implies the much stronger statement  that $\widetilde{N}_{n-2,{\rm cyl}}H^3(X,\mathbb{Z})=H^3(X,\mathbb{Z})$ by Proposition \ref{procylegalstrong}.
In  degree $k=2n-2$, Question \ref{qmain2} is equivalent to asking whether $H_2(X,\mathbb{Z})$ is algebraic, a question that has been studied in
\cite{voisinharris} where it is  proved  that it would follow from the Tate conjecture on divisor classes on surfaces over a finite field.

Another   question concerns possible improvements of  Theorem \ref{theocylRC}.
\begin{question}\label{questiontechniquesurcylst}  Let $X$ be rationally connected smooth projective of dimension $n$ over $\mathbb{C}$.
Is it true that
\begin{eqnarray} N_{1,{\rm cyl}}H^{k}(X,\mathbb{Z})=N_{1,{\rm cyl,st}}H^{k}(X,\mathbb{Z})=\widetilde{N}_{1,{\rm cyl}}H^{k}(X,\mathbb{Z})
\end{eqnarray}
for any $k$?
\end{question}
We believe  that the proof of Theorem \ref{theocylRC} should work by the same smoothing argument  for the cohomology of any degree.
The difficulty that one meets here is that, while we had before  a singular curve in the moduli space of stable maps $f$ to $X$, and only needed
to modify the fibers $f_z$ in a Zariski open neighborhood of the singular points of $C$ so as to make them unobstructed,  one would need
to do a similar construction for a  higher dimensional variety $Z$ with a possibly positive dimensional singular locus.
In this direction, let us note the following generalization  of Theorem \ref{theocylRC}.

 \begin{prop} \label{proisole} Let $X$ be smooth projective  rationally connected of dimension $n$ and let
 $Z$ be a variety of dimension $n-2$ with isolated singularities. Let
 $$f: Y\rightarrow X,\,\,p:Y\rightarrow Z$$
 be a family of stable maps with value in $X$ parameterized by $Z$.
 Then for any $k$
 $${\rm Im}\,(f_*\circ p^*: H_{k-2}(Z,\mathbb{Z})\rightarrow H_k(X,\mathbb{Z}))$$
 is contained in $\widetilde{N}_{1,{\rm cyl}}H^{2n-k}(X,\mathbb{Z})$.

 For $k=n$,
 ${\rm Im}\,(f_*\circ p^*: H_{n-2}(Z,\mathbb{Z})\rightarrow H_n(X,\mathbb{Z}))$ is contained in
 $\widetilde{N}^{1}H^{n}(X,\mathbb{Z})$.
 \end{prop}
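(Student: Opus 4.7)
The proof strategy parallels that of Theorem \ref{theocylRC}, with $Z$ of dimension $n-2$ having isolated singularities now playing the role of the singular curve. After replacing $X$ by $X\times \mathbb{P}^r$, which is harmless in the relevant range of homology thanks to the rational connectedness of $X$, I would arrange that each stable map $f_z:Y_z\rightarrow X$ is an embedding. Then, imitating the rational-curve construction in the proof of Theorem \ref{theocylRC}, I attach in family a generic very ample hypersurface section followed by very free rational curves, obtaining a modified family $\tilde{p}:\widetilde{Y}\rightarrow Z^0$, $\tilde{f}:\widetilde{Y}\rightarrow X$ defined over a Zariski open $Z^0\subset Z$ chosen so as to contain the isolated singular set $\Sigma={\rm Sing}(Z)$, whose fibers $\widetilde{Y}_z\rightarrow X$ are unobstructed stable maps. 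The contribution of the attached rational legs to the cylinder factors through the smooth parameter space of very free rational curves in $X$, so it already lies in $\widetilde{N}_{1,{\rm cyl}}H^{2n-k}(X,\mathbb{Z})$.

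By unobstructedness the modified family is classified by a morphism $\eta:Z^0\rightarrow\mathcal{M}$ to a smooth variety $\mathcal{M}$ (an open of the relevant Hilbert scheme of $X$), over which a universal family $(p_\mathcal{M},f_\mathcal{M})$ pulls back to $(\tilde{p},\tilde{f})$. Hence the modified cylinder factors as $f_{\mathcal{M}*}\circ p_\mathcal{M}^*\circ \eta_*$, and since $\mathcal{M}$ is smooth, its image on $H_{k-2}(Z^0,\mathbb{Z})$ lies in $\widetilde{N}_{1,{\rm cyl}}H_k(X,\mathbb{Z})$. Combined with the rational-curve correction, this shows that the image of the original cylinder $f_*\circ p^*$ on $j_*H_{k-2}(Z^0,\mathbb{Z})\subset H_{k-2}(Z,\mathbb{Z})$ is contained in $\widetilde{N}_{1,{\rm cyl}}$.

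The remaining task, which I expect to be the principal obstacle, is to handle classes in $H_{k-2}(Z,\mathbb{Z})$ not coming from $Z^0$. In the curve case of Theorem \ref{theocylRC} the complement $Z\setminus Z^0$ automatically reduces to finitely many smooth points of $Z$, so $j_*$ is surjective; here $Z\setminus Z^0$ is only known to be a Zariski closed subset of the smooth locus $Z_{\rm sm}$ and may have positive dimension. The plan is to cover $Z$ by finitely many Zariski opens $\{U_\ell\}$, each admitting a modification of the above type with its own choice of auxiliary hypersurface and rational-curve data adapted to a generic point of $U_\ell$; such a cover exists since for every point of $Z$ some choice of data works on a Zariski neighborhood, and quasi-compactness yields a finite subcover. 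Each restricted cylinder $H_{k-2}(U_\ell,\mathbb{Z})\rightarrow H_k(X,\mathbb{Z})$ then lies in $\widetilde{N}_{1,{\rm cyl}}$, and one would conclude by a Mayer--Vietoris chain-level argument. The delicate point is that integer-coefficient Mayer--Vietoris does not automatically yield surjectivity of $\bigoplus_\ell H_{k-2}(U_\ell,\mathbb{Z})\rightarrow H_{k-2}(Z,\mathbb{Z})$; one should circumvent this either by forcing the classifying maps on overlaps to be compatible (producing a single morphism $Z\to\mathcal{M}$ into a smooth target), or by an induction on the dimension of $Z\setminus Z^0$ via a resolution, which is available since $Z\setminus Z^0\subset Z_{\rm sm}$ is a subvariety of a smooth quasi-projective variety. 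Once the cylinder containment is established for all $k$, the assertion about $\widetilde{N}^1 H^n(X,\mathbb{Z})$ in the case $k=n$ is immediate from Lemma \ref{lequisert}.
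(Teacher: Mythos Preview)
Your modification of the fibers by attaching complete-intersection curves and very free rational curves is right and parallels the paper's argument, as does your observation that the rational-leg contribution already lies in $\widetilde{N}_{1,{\rm cyl}}$. The gap is exactly where you flag it: neither of your proposed routes from $H_{k-2}(Z^0,\mathbb{Z})$ to $H_{k-2}(Z,\mathbb{Z})$ closes cleanly. Integer Mayer--Vietoris for a cover $\{U_\ell\}$ does not give surjectivity of $\bigoplus_\ell H_{k-2}(U_\ell,\mathbb{Z})\to H_{k-2}(Z,\mathbb{Z})$, and forcing compatibility of classifying maps on overlaps amounts to gluing the auxiliary rational-curve data coherently across charts, which is essentially the original difficulty. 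The induction on $\dim(Z\setminus Z^0)$ does not obviously work either, since the relevant exact sequence involves the relative group $H_*(Z,Z^0)$ rather than $H_*(Z\setminus Z^0)$.

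The paper avoids open restrictions entirely and works instead with \emph{proper} birational modifications that are isomorphisms over ${\rm Sing}\,Z$. One builds the modified family over a proper $\tau:Z'\to Z$ (an isomorphism near ${\rm Sing}\,Z$), demanding unobstructedness only at the singular points; the classifying morphism $g:Z'\to M$ then lands in the full moduli space of stable maps, which is smooth at $g({\rm Sing}\,Z')$ and at the generic point but possibly singular elsewhere. One desingularizes $M$ to $\widetilde{M}$ (which still carries the pulled-back universal family) and resolves the resulting rational map $Z'\dashrightarrow \widetilde{M}$---whose indeterminacy lies in $Z'_{\rm sm}$---by a further proper $\tau':\widetilde{Z}'\to Z'$, again an isomorphism over ${\rm Sing}\,Z$. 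The cylinder over $\widetilde{Z}'$ now factors through the smooth $\widetilde{M}$, hence lands in $\widetilde{N}_{1,{\rm cyl}}$. The single fact that replaces your Mayer--Vietoris step is that $(\tau\circ\tau')_*:H_{k-2}(\widetilde{Z}',\mathbb{Z})\to H_{k-2}(Z,\mathbb{Z})$ is surjective: since the exceptional locus of $\tau\circ\tau'$ lies in $Z_{\rm sm}$, its rational inverse can be resolved by blow-ups with smooth centers inside $Z_{\rm sm}$, and each such blow-up has surjective pushforward on integral homology.
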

\begin{proof} The second statement is implied by the first using Lemma \ref{lequisert}.
Using the fact that the singularities of $Z$ are isolated, we apply the same construction as in the proof of Theorem \ref{theocylRC} of gluing very free curves to  the  fiber $f_z:Y_z\rightarrow X$, getting a modified family
\begin{eqnarray}\label{eqfamilyprime} f': Y'\rightarrow X,\,\,p':Y'\rightarrow Z'
\end{eqnarray}
of stable maps to $X$ parameterized by a variety $Z'\stackrel{\tau}{\rightarrow} Z$ which is birational to $Z$ and isomorphic to $Z$ near ${\rm Sing}\,Z$ with the following properties:

(a)  The cylinder homomorphism ${f'}_*\circ {p'}^*:H_{k-2}(Z',\mathbb{Z})\rightarrow H_k(X,\mathbb{Z})$ coincides with
$f_*\circ p^*\circ \tau_*$.

(b)  The moduli space $M$ of stable maps to $X$ is smooth at any point $f'_z:Y'_z\rightarrow X$, where $z$ is a singular point of $Z'$ (or equivalently $Z$), hence at the  point $f'_z$ for $z$ general in $Z'$.

We now conclude as follows: first of all, we reduce to the case where the maps are  embeddings (for example by replacing $X$ by $X\times\mathbb{P}^r$), so that the stable maps are automorphism free. We then consider the universal  deformation of $f'_z,\,z\in Z'$, given by a family of automorphism free
stable maps \begin{eqnarray}\label{eqfamilyM}f_M: Y_M\rightarrow X,\,\,p_M:Y_M\rightarrow M
\end{eqnarray}
parameterized by $M$. Using the automorphism free assumption, we have a classifying morphism $g: Z'\rightarrow M$, such that
the family (\ref{eqfamilyprime}) is obtained from  the family (\ref{eqfamilyM}) by base change under $g$.
We know that $M$ is smooth near $g({\rm Sing}\,Z')$, so we can introduce a desingularization $\widetilde{M}$ of $M$, and a modification
$\tau':\widetilde{Z}'\rightarrow Z'$ which is an isomorphism over ${\rm Sing}\,Z'$, such that
the rational map $g:Z'\dashrightarrow \widetilde{M}$ induces a morphism
$$\tilde{g}:\widetilde{Z}'\rightarrow \widetilde{M}.$$
Over the desingularized moduli space $\widetilde{M}$, we have
the pulled-back family
\begin{eqnarray}\label{eqfamilyMtilde}\tilde{f}_{M}: Y_{\widetilde{M}}\rightarrow X,\,\,\tilde{p}_M:Y_{\widetilde{M}}\rightarrow {\widetilde{M}},
\end{eqnarray}
and over  $\widetilde{Z}'$, we have the family
\begin{eqnarray}\label{eqfamilyprimetilde} \tilde{f}': \widetilde{Y}'\rightarrow X,\,\,\tilde{p}':\widetilde{Y}'\rightarrow \widetilde{Z}',
\end{eqnarray}
which is deduced either from (\ref{eqfamilyMtilde}) by base-change under $\tilde{g}$ or  from (\ref{eqfamilyprime}) by base-change under
$\tau'$.
We conclude that
$$ \tilde{f}'_*\circ (\tilde{p}')^*=
 \tilde{f}_{M*}\circ \tilde{p}_M^*\circ \tilde{g}_*:H_{k-2}(\widetilde{Z}',\mathbb{Z})\rightarrow H_{k}(X,\mathbb{Z}),$$
and, as $\widetilde{M}$ is smooth, we get that ${\rm Im}\,\tilde{f}'_*\circ (\tilde{p}')^*\subset \widetilde{N}_{1,{\rm cyl}} H_k(X,\mathbb{Z})$.
Finally, we also have by (a)
$$ \tilde{f}'_*\circ (\tilde{p}')^*=f_*\circ p^*\circ (\tau\circ \tau')_*:H_{k-2}(\widetilde{Z}',\mathbb{Z})\rightarrow H_{k}(X,\mathbb{Z}),$$
and, as $\tau\circ \tau': \widetilde{Z}'\rightarrow Z$ is proper birational and an isomorphism over ${\rm Sing}\,Z$, the map
$$(\tau\circ \tau')_*:H_{k-2}(\widetilde{Z}',\mathbb{Z})\rightarrow H_{k-2}({Z},\mathbb{Z})$$
 is surjective.
It follows that ${\rm Im}\,f_*\circ p^*\subset \widetilde{N}_{1,{\rm cyl}} H_k(X,\mathbb{Z})$.
\end{proof}
Our last question concerns the representability of the Abel-Jacobi isomorphism for $1$-cycles on rationally connected threefolds (we refer here to \cite{vialetco} for a general discussion of the motivic nature of $J^3(X)$).
As discussed in Section \ref{secaj}, another way of stating Theorem \ref{theomain} or its generalization \ref{theomainvrai} is to say
that, if $X$ is a rationally connected  manifold of dimension $n$,
there exist  a curve $C$ and a codimension  $n-1$ cycle $\mathcal{Z}\in{\rm CH}^{n-1}(C\times X)$
such that the lifted Abel-Jacobi map
$$\widetilde{\Phi}_\mathcal{Z}: J(C)\rightarrow J(N^{n-2}H^{2n-3}(X,\mathbb{Z})_{\rm tf})$$
is surjective with {\it connected} fibers. (When $n=3$, we already mentioned that $N^{1}H^{3}(X,\mathbb{Z})_{\rm tf}=H^{3}(X,\mathbb{Z})_{\rm tf}$.)

Note that it was proved in \cite{voisininvent} that, even for $X$ rationally connected of dimension $3$,there does not necessarily exist
a universal codimension $n-1$ cycle
$$\mathcal{Z}_{\rm univ}\in{\rm CH}^{n-1}(J(N^{n-2}H^{2n-3}(X,\mathbb{Z})_{\rm tf})\times X)$$
such that the induced lifted Abel-Jacobi map
$$\widetilde{\Phi}_{\mathcal{Z}}:J(N^{n-2}H^{2n-3}(X,\mathbb{Z})_{\rm tf})\rightarrow J(N^{n-2}H^{2n-3}(X,\mathbb{Z})_{\rm tf})$$
is the identity.
However, the following question remains open.
\begin{question} Let $X$ be a rationally connected manifold of dimension $n$. Does there exist a smooth  projective manifold
$M$ and a codimension ${n-1}$-cycle
$$\mathcal{Z}_M\in{\rm CH}^{n-1}(M\times X)$$
such that the map
$$\widetilde{\Phi}_{\mathcal{Z}_M}:{\rm Alb}\,M\rightarrow J(N^{n-2}H^{2n-3}(X,\mathbb{Z})_{\rm tf})$$
is an isomorphism?
\end{question}
In practice, the answer is yes for Fano threefolds, at least for generic ones. For example, one can use the Fano surface of lines
for the cubic threefold (see \cite{clegri}), and similarly for the quartic double solid \cite{welters}. For quartic threefolds, the surface of conics works (see \cite{letzia}).

The motivation for asking this question is the following:
\begin{prop}\label{profinalpourquestionaj} If $X$ admits a cohomological decomposition of the diagonal,
(in particular, if $X$ is stably rational), there exist
a smooth  projective manifold
$M$ and a codimension-${n-1}$ cycle
$$\mathcal{Z}_M\in{\rm CH}^{n-1}(M\times X)$$
such that the Abel-Jacobi map
$$\widetilde{\Phi}_{\mathcal{Z}_M}:{\rm Alb}\,M\rightarrow J(H^{2n-3}(X,\mathbb{Z})_{\rm tf})$$
is an isomorphism
\end{prop}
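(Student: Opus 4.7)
My plan starts from Theorem~\ref{theojems}, which produces smooth projective $Z_i$ of dimension $n-2$, integers $n_i$, and correspondences $\Gamma_i\in {\rm CH}^{n-1}(Z_i\times X)$ with
$$ {\rm id}_{H^{2n-3}(X,\mathbb{Z})}=\sum_i n_i\,[\Gamma_i]_*\circ[\Gamma_i]^*.$$
I would assemble these into a single geometric object by setting $Z:=\prod_i Z_i$ and
$$\mathcal{Z}_Z:=\sum_i n_i\,({\rm pr}_{Z_i}\times {\rm id}_X)^*\Gamma_i\ \in\ {\rm CH}^{n-1}(Z\times X).$$
A direct K\"unneth/projection-formula computation shows that the induced $P:=[\mathcal{Z}_Z]_*\colon H_1(Z,\mathbb{Z})=\bigoplus_i H_1(Z_i,\mathbb{Z})\to H^{2n-3}(X,\mathbb{Z})$ is surjective and admits an algebraic Hodge-structure section $s(\alpha):=([\Gamma_i]^*\alpha)_i$, induced by the transpose correspondences ${}^t\Gamma_i$, with $P\circ s={\rm id}$.

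Since $s$ is a section of polarizable integral Hodge structures, $H^{2n-3}(X,\mathbb{Z})_{\rm tf}$ is a direct summand of $H_1(Z,\mathbb{Z})_{\rm tf}$, and passing to intermediate Jacobians the product ${\rm Alb}(Z)=\prod_i{\rm Alb}(Z_i)$ splits as a product $J\times K$ of abelian subvarieties, with $J\cong J(H^{2n-3}(X,\mathbb{Z})_{\rm tf})$ the subvariety corresponding to ${\rm Im}(s)$. Writing $a_Z\colon Z\to {\rm Alb}(Z)$ for the Albanese morphism and $\phi:=\pi_K\circ a_Z\colon Z\to K$ for the projection to the $K$-factor, I take $M$ to be a smooth projective subvariety of $Z$ whose image under $a_Z$ lies in a single translate of $J\subset {\rm Alb}(Z)$ and for which the restricted Albanese ${\rm Alb}(M)\to J$ is an isomorphism; concretely, $M$ is (a smooth ample complete intersection inside) a generic fiber $\phi^{-1}(k_0)$. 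Setting $\mathcal{Z}_M:=(\iota_M\times{\rm id}_X)^*\mathcal{Z}_Z\in {\rm CH}^{n-1}(M\times X)$, one has $[\mathcal{Z}_M]_*=P\circ(\iota_M)_*$; once the embedding $\iota_M$ identifies $H_1(M,\mathbb{Z})_{\rm tf}$ with the summand ${\rm Im}(s)=H_1(J)$, the restriction $P|_{{\rm Im}(s)}=s^{-1}$ is an isomorphism, and hence $\widetilde{\Phi}_{\mathcal{Z}_M}\colon{\rm Alb}(M)\to J(H^{2n-3}(X,\mathbb{Z})_{\rm tf})$ is the desired isomorphism of abelian varieties.

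The hard part of the plan is showing ${\rm Alb}(M)\cong J$ rather than a larger abelian variety surjecting onto $J$. By the Albanese exact sequence for the fibration $\phi$, a general fiber $F=\phi^{-1}(k_0)$ satisfies ${\rm Alb}(F)\twoheadrightarrow \ker({\rm Alb}(Z)\to{\rm Alb}(K))=J$, but the kernel of this surjection, controlled by the monodromy action of $\pi_1(K)$ on $H_1$ of the fibers, need not vanish a priori. The key point is that the algebraicity of the section $s$, which came from the decomposition of the diagonal, constrains the monodromy to act trivially on the sub-Hodge structure ${\rm Im}(s)$ pulled back from $J$; combining this with Lefschetz hyperplane cutting inside $F$ to suppress the extra $H_1$ contributions that do not come from $J$, one produces the desired $M$ with ${\rm Alb}(M)\cong J$, completing the construction.
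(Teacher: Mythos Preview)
Your approach shares the paper's starting point---using Theorem~\ref{theojems} to obtain a smooth $Z$ with ${\rm Alb}(Z)$ surjecting onto $J:=J(H^{2n-3}(X,\mathbb{Z})_{\rm tf})$ with an algebraic right inverse---but the construction of $M$ has a genuine gap.

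The problem is the last step. Taking $M$ to be a general fiber $F=\phi^{-1}(k_0)$ of the composite $\phi=\pi_K\circ a_Z:Z\to K$, there is no reason that ${\rm Alb}(F)\cong J$. Even granting that ${\rm Alb}(F)$ surjects onto $J$ (which already requires $\phi$ to behave like a fibration with connected fibers---not automatic, since $a_Z$ need not be surjective), the kernel of ${\rm Alb}(F)\to J$ is governed by the $H_1$ of the fibers of $a_Z$, and nothing forces this to vanish. Your monodromy remark does not address this: the algebraicity of $s$ tells you that ${\rm Im}(s)\subset H_1(Z)$ is a global, hence monodromy-invariant, sub-Hodge structure, but the extra $H_1$ of $F$ lives in the \emph{kernel} of $H_1(F)\to H_1(Z)$, which is invisible to $s$. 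And Lefschetz hyperplane cutting goes the wrong way: for a smooth ample section $M\subset F$ with $\dim M\geq 2$ one has $H_1(M)\cong H_1(F)$, while cutting down to a curve only \emph{enlarges} $H_1$. So you cannot ``suppress extra $H_1$ contributions'' this way.

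The paper circumvents this by proving an independent lemma: for any smooth projective $Z$ and any abelian subvariety $A\subset{\rm Alb}(Z)$, there is a smooth projective $Z'$ and a $0$-correspondence $\gamma'\in{\rm CH}^{\dim Z}(Z'\times Z)$ with $\gamma'_*:{\rm Alb}(Z')\xrightarrow{\sim}A$. The point is that one does not take fibers of the Albanese of $Z$ itself, but first replaces $Z$ by an auxiliary variety whose Albanese map has controlled fibers. For $Z$ a curve, one uses the symmetric product $Z^{(N)}$, whose Abel map $f:Z^{(N)}\to{\rm Alb}(Z)$ is a projective bundle for $N\gg0$, so that $Z':=f^{-1}(A)$ is a projective bundle over $A$ and hence ${\rm Alb}(Z')=A$ on the nose. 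For $Z$ a surface, one passes to a relative Picard scheme $Z_0\to\mathbb{P}^1$ over a Lefschetz pencil; the fibers of $a_{Z_0}$ are then abelian varieties with no nonconstant maps to a fixed abelian variety (by Deligne's global invariant cycle theorem), which is exactly what kills the unwanted kernel when one pulls back a translate of $A$. Applying the lemma to $A={\rm Im}({\Gamma'}^*)\cong J$ produces the desired $M=Z'$.
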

(Note that $N^{n-2}H^{2n-3}(X,\mathbb{Z})_{\rm tf}=H^{2n-3}(X,\mathbb{Z})_{\rm tf}$ under the same assumption.)
\begin{proof}[Proof of Proposition \ref{profinalpourquestionaj}] It follows from
Theorem \ref{theojems} that there exist
a (nonnecessarily connected) smooth projective variety
$Z$ of dimension $n-2$ and a family of $1$-cycles
$$\Gamma\in {\rm CH}^{n-1}(Z\times X)$$
such that
$$\Gamma_*:{\rm Alb}(Z)\rightarrow J(H^{2n-3}(X,\mathbb{Z})_{\rm tf})$$
is surjective with a right   inverse ${\Gamma'}^*:J(H^{2n-3}(X,\mathbb{Z})_{\rm tf})\rightarrow {\rm Alb}(Z)$. We now have
 the following lemma.
\begin{lemm} Let $Z$ be a smooth projective variety of dimension $n-2$ and let $A\subset {\rm Alb}(Z)$ be an abelian subvariety.
Then there exists a smooth projective variety $Z'$ and a $0$-correspondence
$\gamma'\in{\rm CH}^{n-2}(Z'\times Z)$ inducing an isomorphism
$\gamma'_*:{\rm Alb}\,Z'\cong A\subset {\rm Alb}\,Z$.
\end{lemm}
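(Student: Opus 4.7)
The plan is to take $Z'=A$: since $A$ is itself an abelian variety, it is smooth projective and ${\rm Alb}(A)=A$ tautologically, so the lemma reduces to constructing a cycle $\gamma' \in {\rm CH}^{n-2}(A \times Z)$ whose induced homomorphism $\gamma'_*\colon A = {\rm Alb}(A) \to {\rm Alb}(Z)$ coincides with the given inclusion $i\colon A \hookrightarrow {\rm Alb}(Z)$. (We allow a smooth projective birational modification of $A$, whose Albanese is still canonically $A$.)

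To produce $\gamma'$, I would lift $i$ to a family of effective $0$-cycles on $Z$ parameterized by $A$. For $d\geq \dim {\rm Alb}(Z)$ the summation map
$$\sigma_d\colon {\rm Sym}^d(Z)\longrightarrow {\rm Alb}(Z),\qquad [z_1,\ldots,z_d] \longmapsto \textstyle\sum_i\alpha_Z(z_i) - d\alpha_Z(z_0)$$
is surjective. I would then construct a rational lift $\tilde\imath\colon A \dashrightarrow {\rm Sym}^d(Z)$ with $\sigma_d\circ\tilde\imath = i$ by fixing an effective reference $0$-cycle $W_0$ on $Z$ with generic Albanese class and, for $a \in A$, perturbing $W_0$ by a family of rationally equivalent $0$-cycles derived from the translation action of $A\subset {\rm Alb}(Z)$. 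Resolving the indeterminacy of $\tilde\imath$ yields a smooth projective birational modification $\tau\colon \tilde A\to A$ (with ${\rm Alb}(\tilde A)={\rm Alb}(A)=A$ by birational invariance) equipped with a morphism $\tilde A\to {\rm Sym}^d(Z)$; the latter corresponds to an effective relative $0$-cycle of degree $d$, i.e.\ to a cycle $\tilde\gamma\in {\rm CH}^{n-2}(\tilde A\times Z)$.

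Setting $Z':=\tilde A$ and $\gamma':=\tilde\gamma$, the induced map $\gamma'_*\colon {\rm Alb}(\tilde A)\to {\rm Alb}(Z)$ is, by construction, the one induced by the composition $\tilde A\to {\rm Sym}^d(Z)\xrightarrow{\sigma_d}{\rm Alb}(Z)$, which factors as $\tilde A\xrightarrow{\tau} A\xrightarrow{i} {\rm Alb}(Z)$; identifying ${\rm Alb}(\tilde A)$ with $A$ via $\tau_*$, this is exactly $i$, hence an isomorphism onto $A\subset {\rm Alb}(Z)$. The main obstacle will be the existence of the rational lift $\tilde\imath$: for $Z$ a curve it is classical Abel--Jacobi (for $d$ large, $\sigma_d$ is birationally a projective bundle), but for higher-dimensional $Z$ the fibers of $\sigma_d$ are typically not rational (Mumford) so Graber--Harris--Starr does not apply directly. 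A concrete construction using the translation action of $A$ on a reference effective $0$-cycle of sufficiently large degree, combined with a Bertini-type argument to preserve effectivity on a dense open of $A$, should however be enough to establish $\tilde\imath$ rationally, after which resolution of indeterminacy is automatic.
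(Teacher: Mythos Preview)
Your proposal has a genuine gap at precisely the point you flag: the construction of the rational lift $\tilde\imath\colon A\dashrightarrow{\rm Sym}^d(Z)$. Your suggested remedy, ``the translation action of $A$ on a reference effective $0$-cycle,'' does not make sense as written: $A$ acts by translation on ${\rm Alb}(Z)$, but there is no induced action of $A$ on $Z$ or on ${\rm Sym}^d(Z)$, so one cannot translate a $0$-cycle on $Z$ by an element of $A$. Lifting the translation action along $\sigma_d$ is exactly the problem you are trying to solve, so the argument is circular. A Bertini-type statement will not help either, since the difficulty is algebraic (producing any rational section), not a genericity issue. As you yourself note, for $\dim Z\geq 2$ the fibres of $\sigma_d$ are not rationally connected by Mumford, so there is no general mechanism producing such a section.

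The paper avoids this problem entirely by a different route. It first reduces via the Lefschetz hyperplane theorem to the case $\dim Z\leq 2$. The curve case is handled as you describe (for $N\gg 0$, $Z^{(N)}\to{\rm Alb}(Z)$ is a projective bundle, and one takes $Z'$ to be the preimage of $A$). For the surface case, instead of seeking a section of ${\rm Sym}^d(Z)\to{\rm Alb}(Z)$, the paper builds an auxiliary variety $Z_0$, a smooth projective model of the relative Picard ${\rm Pic}^0(\widetilde{Z}/\mathbb{P}^1)$ for a Lefschetz pencil $\widetilde{Z}\to\mathbb{P}^1$ of ample curves on $Z$. This $Z_0$ carries a natural correspondence to $Z$ inducing an isomorphism ${\rm Alb}(Z_0)\cong{\rm Alb}(Z)$, and the crucial point is that the fibres of the Albanese map $a\colon Z_0\to{\rm Alb}(Z_0)$ have trivial Albanese, by Deligne's global invariant cycle theorem applied to the vanishing cohomology of the pencil. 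One then takes $Z':=a^{-1}(A_u)$ for a generic translate $A_u$; this is smooth with ${\rm Alb}(Z')\cong A$. The key idea you are missing is to replace the uncontrolled fibration ${\rm Sym}^d(Z)\to{\rm Alb}(Z)$ by this carefully engineered $Z_0\to{\rm Alb}(Z_0)$ whose fibres have no Albanese of their own.
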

\begin{proof} Suppose first that $Z$ is connected of dimension $1$. Then  for $N$ large enough,
the Abel map
$$f:Z^{(N)}\rightarrow {\rm Alb}(Z)$$
is a projective bundle. Let now
$Z':=f^{-1}(A)$. One has ${\rm Alb}(Z')\cong A$, and we can take for $\gamma'$ the restriction
to $Z'\times Z$ of the natural incidence correspondence
$I\subset Z^{(N)}\times  Z$.

For the general case, we quickly reduce, using the Lefschetz theorem on hyperplane sections, to the case where $Z$ is a connected surface. Then we consider
a Lefschetz pencil $\widetilde{Z}\rightarrow \mathbb{P}^1$ of ample curves on $Z$.
Let $Z_0$ be a smooth projective model of  ${\rm Pic}^0(\widetilde{Z}/\mathbb{P}^1)$. Then $Z_0$    is birational to ${\rm Pic}^1(\widetilde{Z}/\mathbb{P}^1)$
 using one of the base-points, and thus admits a natural correspondence
 $\gamma\in{\rm CH}^2(Z_0\times\widetilde{Z})$. It is immediate to check that
 $$\gamma_*:{\rm Alb}(Z_0)\rightarrow {\rm Alb}(Z)$$
 is an isomorphism.  Let $a:Z_0\rightarrow {\rm Alb}(Z_0)$ be the Albanese map. We claim that, denoting
 $Z'_u:=a^{-1}(A_u)$, where $A_u$ is a generic translate of $A$ in ${\rm Alb}\,(Z)$, $Z'_u$ is smooth and we have
 $${\rm Alb}(Z'_u)\cong A.$$
As $Z_0$ is smooth and $A_u$ is smooth, the smoothness of   $a^{-1}(A_u)$ for a general translate $A_u$ of $A$ follows from standard transversality arguments.  For the second point, we observe that, by definition, a Zariski open set of  $Z_0$ is fibered over
$\mathbb{P}^1$  into Jacobians $J(\widetilde{Z}_t),\,t\in \mathbb{P}^1$, and that the natural map
 $J(\widetilde{Z}_t)\rightarrow {\rm Alb}(Z)={\rm Alb}(\widetilde{Z})$ has connected fiber isomorphic to $J({\rm Ker}\,(H^1(\widetilde{Z}_t,\mathbb{Z})\rightarrow H^3(\widetilde{Z},\mathbb{Z})_{\rm tf}))$. Here the connectedness of the fibers indeed
 follows from the Lefschetz theorem on hyperplane sections which says that the Gysin morphism
 $$H^1(\widetilde{Z}_t,\mathbb{Z})\rightarrow H^3(\widetilde{Z},\mathbb{Z})_{\rm tf}$$
 is surjective (see the discussion in Section \ref{secmaterial}).
 It follows that a Zariski open set of  $Z'_u$ is fibered over $\mathbb{P}^1\times A_u$ into connected abelian varieties $J({\rm Ker}\,(H^1(\widetilde{Z}_t,\mathbb{Z})\rightarrow H^3(\widetilde{Z},\mathbb{Z})_{\rm tf}))$.
 On the other hand, by the Deligne global invariant cycle theorem,  there is no
 nonconstant morphism from $J({\rm Ker}\,(H^1(\widetilde{Z}_t,\mathbb{Z})\rightarrow H^3(\widetilde{Z},\mathbb{Z})_{\rm tf}))$
 to a fixed abelian variety. It follows that ${\rm Alb}\,Z_u=A$.

Finally, we have $Z'_u\subset Z_0$ and $Z_0$ has a natural correspondence to $Z$, so combining both we get
a natural correspondence $\gamma'$ between $Z'_u$ and $Z$, inducing the morphism
$${\rm Alb}(Z'_u)\cong A\subset {\rm Alb}\,Z.$$
 Then $\Gamma\circ \gamma'$ produces the desired  correspondence.
\end{proof}
We apply this lemma to $A:={\rm Im}\,(\Gamma^*: J(H^{2n-3}(X,\mathbb{Z})_{\rm tf})\rightarrow {\rm Alb}(Z))$. We thus get
a smooth projective variety $Z'$ with Albanese variety isomorphic to $J(H^{2n-3}(X,\mathbb{Z})_{\rm tf})$ and
cycle $\Gamma':=\Gamma\circ \gamma'\in {\rm CH}^{n-1}(Z'\times X)$ which induces the isomorphism
$\Gamma_*\circ \gamma_*:{\rm Alb}\,Z'\cong J(H^{2n-3}(X,\mathbb{Z})_{\rm tf})$.
\end{proof}

CNRS, IMJ-PRG,
 4 Place Jussieu,  Case 247,
75252 Paris Cedex 05,  FRANCE,
claire.voisin@imj-prg.fr
    \end{document}